\newtheorem{assump}{Assumption}
\newtheorem{remark}{Remark}
\newtheorem{problem}{Problem}
\newcommand{\vf}{{\mathbf{f}}}
\newcommand{\vg}{{\mathbf{g}}}
\newcommand{\vv}{{\mathbf{v}}}
\newcommand{\vw}{{\mathbf{w}}}
\newcommand{\vx}{{\mathbf{x}}}
\newcommand{\vy}{{\mathbf{y}}}
\newcommand{\vz}{{\mathbf{z}}}
\newcommand{\vB}{{\mathbf{B}}}
\newcommand{\vC}{{\mathbf{C}}}
\newcommand{\vH}{{\mathbf{H}}}
\newcommand{\vN}{{\mathbf{N}}}
\newcommand{\vR}{{\mathbf{R}}}
\newcommand{\vS}{{\mathbf{S}}}
\newcommand{\vV}{{\mathbf{V}}}
\newcommand{\vW}{{\mathbf{W}}}
\newcommand{\vZ}{{\mathbf{Z}}}
\newcommand{\cB}{{\mathcal{B}}}
\newcommand{\cG}{{\mathcal{G}}}
\newcommand{\cH}{{\mathcal{H}}}
\newcommand{\cS}{{\mathcal{S}}}
\newcommand{\cT}{{\mathcal{T}}}
\newcommand{\dom}{{\mathrm{dom}}} % domain
\newcommand{\prox}{\mathbf{prox}}
\newcommand{\tnabla}{\widetilde{\nabla}}
\newcommand{\TPRS}{T_{\mathrm{PRS}}}
\DeclareMathOperator*{\argmin}{arg\,min}
\DeclareMathOperator*{\Min}{minimize}
\DeclareMathOperator*{\zer}{zer}
\DeclareMathOperator*{\gra}{gra}
\newcommand{\bc}{\begin{center}}
\newcommand{\ec}{\end{center}}
\newcommand{\bdm}{\begin{displaymath}}
\newcommand{\edm}{\end{displaymath}}
\newcommand{\beq}{\begin{equation}}
\newcommand{\eeq}{\end{equation}}
\newcommand{\bfl}{\begin{flushleft}}
\newcommand{\efl}{\end{flushleft}}
\newcommand{\bt}{\begin{tabbing}}
\newcommand{\et}{\end{tabbing}}
\newcommand{\beqn}{\begin{align}}
\newcommand{\eeqn}{\end{align}}
\newcommand{\beqs}{\begin{align*}} % no equation numbers
\newcommand{\eeqs}{\end{align*}}  % no equation numbers
\newcommand\numberthis{\addtocounter{equation}{1}\tag{\theequation}}
\DeclarePairedDelimiter{\dotp}{\langle}{\rangle}
\newcommand{\refl}{\mathbf{refl}}
\def\cut#1{{}}
\title{Convergence rate analysis of primal-dual splitting schemes\thanks{This work is partially supported by NSF GRFP grant DGE-0707424. 
}}
\author{Damek Davis\thanks{Department of Mathematics, University of California, Los Angeles,
              Los Angeles, CA 90025/
              School of Operations Research and Information Engineering, Cornell University, 
              Ithaca, NY 14850
              \email{(damek@math.ucla.edu)}}}
\begin{document}
\maketitle
\slugger{siopt}{xxxx}{xx}{x}{x--x}%slugger should be set to mms, siap, sicomp, sicon, sidma, sima, simax, sinum, siopt, sisc, or sirev

\begin{abstract}
Primal-dual splitting schemes are a class of powerful algorithms that solve complicated monotone inclusions and convex optimization problems that are built from many simpler pieces. They decompose problems that are built from sums, linear compositions, and infimal convolutions of simple functions so that each simple term is processed individually via proximal mappings, gradient mappings, and multiplications by the linear maps. This leads to easily implementable and highly parallelizable or distributed algorithms, which often obtain nearly state-of-the-art performance.

In this paper, we analyze a monotone inclusion problem that captures a large class of primal-dual splittings as a special case.  We introduce a unifying scheme and use some abstract analysis of the algorithm to prove convergence rates of the proximal point algorithm, forward-backward splitting, Peaceman-Rachford splitting, and forward-backward-forward splitting applied to the model problem.  Our ergodic convergence rates are deduced under variable metrics, stepsizes, and relaxation.  Our nonergodic convergence rates are the first shown in the literature. Finally, we apply our results to a large class of primal-dual algorithms that are a special case of our scheme and deduce their convergence rates. 
\end{abstract}

\begin{keywords}
primal-dual algorithms, convergence rates, proximal point algorithm, forward-backward splitting, forward-backward-forward splitting, Douglas-Rachford splitting, Peaceman-Rachford splitting, nonexpansive operator, averaged operator, fixed-point algorithm
\end{keywords}

\begin{AMS}
47H05, 65K05, 65K15, 90C25
 \end{AMS}

\pagestyle{myheadings}
\thispagestyle{plain}
\markboth{D. Davis}{Convergence rates in primal-dual splitting schemes}

%%%%%%%%%%%
%%Begin Section 1 
\section{Introduction}
Primal-dual algorithms are abstract splitting schemes that solve monotone inclusion and convex optimization problems.  These schemes fully decompose problems built from sums, linear compositions, parallel sums, and infimal convolutions of simple functions so that each simple term is processed individually.  This decomposition is achieved by cleverly combining primal and dual pair problems into a single inclusion problem, to which standard operator splitting algorithms can be applied. This process gives rise to algorithms that are inherently parallel or distributed and in which expensive matrix inversions can be avoided.  The characteristics of primal-dual algorithms are especially desirable for large-scale applications in machine learning, image processing, distributed optimization, and control.  

Primal-dual methods have a long history with many contributors, and an attempt to summarize and relate all of the contributions is beyond the scope of this paper. In this paper, we are mainly concerned with the line of work that began in \cite{pock2009algorithm,chambolle2011first,esser2010general} and the many generalizations and enhancements of the basic framework that followed \cite{combettes2012primal,condat2013primal,vu2013splitting,briceno2011monotone+,bo?2013douglas,bot2013algorithm,combettes2013systems,komodakis2014playing,bo2014convergence,combettes2014forward}.  Thus, we consider the following prototypical convex optimization problem as our guiding example:
\begin{align}\label{eq:pdsimpleproblem}
\Min_{x \in \cH_0} f(x) + g(x) + \sum_{i=1}^n (h_i \square l_i)(B_i x)
\end{align}
where $\square$ denotes the infimal convolution operation (see Section~\ref{sec:notation}), $n \in \vN$, $n \geq 1$, $\cH_i$ are Hilbert spaces for $i=0, \ldots, n$, the functions $f, g : \cH_0 \rightarrow (-\infty,\infty]$ and $h_i, l_i : \cH_i\rightarrow (-\infty, \infty]$ are closed, proper, and convex for $i=1, \cdots, n$, and $B_i : \cH_0\rightarrow \cH_i$ is a bounded linear map for $i=1, \ldots, n$.  

All of the algorithms presented in this paper completely disentangle the structure of Problem~\eqref{eq:pdsimpleproblem} so that each iteration only involves the individual proximal operators of each of the nondifferentiable terms, the gradient operators of the differentiable terms, and multiplication by the linear maps.  Thus, the maps $B_i$ are never inverted, and we never compute proximal operators or gradients of sums or infimal convolutions of functions.  We note that this level of separability is not achieved by classical splitting methods such as forward-backward splitting, Douglas-Rachford splitting, or the alternating direction method of multipliers (ADMM) when they are applied directly to the primal optimization Problem~\eqref{eq:pdsimpleproblem} \cite{bruck1977weak,passty1979ergodic,GlowinskiADMM,lions1979splitting}.   

In Problem~\eqref{eq:pdsimpleproblem}, the maps $B_i$ can be used as ``data matrices," in which case $h_i$ and $l_i$ are \emph{data fitting} terms and $f$ and $g$ enforce \emph{prior knowledge} on the structure of the solution, such as sparsity, low rank, or smoothness.  In other cases, the maps $h_i$ and $l_i$ may be regularizers that emphasize many competing structures. We now present an example.

{\bf Application: Constrained model fitting with group-structured regularizers.} Fix $d, m \in \vN \backslash \{0\}$. Suppose we are given a measurement $b \in \vR^d$ and a dictionary $A \in \vR^{d\times m}$. Our goal is to recover a highly structured signal $x = (x_1, \cdots, x_m)^T \in \vR^m$ such that $Ax \approx b$.  For example, in the hierarchical sparse coding problem (HSCP) \cite{jenatton2011proximal}, we arrange the columns of $A$ into a directed tree structure $\cT$ and allow $x_i = 0$ only if $x_j = 0$ for all descendants $j$ in $\cT$ of node $i$.  Such a hierarchical representation is particularly useful for multi-scale data such as images and text documents. This type of regularization can be generalized to include arbitrary column groupings and complicated relationships between the elements of each group. Indeed, let $G$ be a set of (possibly overlapping) subsets of $\{1, \cdots, m\}$. For all $S \in G$ and $x \in \vR^m$, let $B_Sx = L_S(x_i)_{i \in S}^T \in \vR^{m_S}$ where $m_S \in \vN \backslash \{0\}$ and $L_S : \vR^{|S|} \rightarrow \vR^{m_{S}}$ is a linear map. Let $C \subseteq \vR^m$ be a closed convex set, and let $\iota_C : \vR^m \rightarrow \{0, \infty\}$ be the convex indicator function of $C$. For all $S \in G$, let $h_S : \vR^{m_S} \rightarrow (-\infty, \infty]$ be a closed, proper, and convex regularizer, and let $l_S = \iota_{\{0\}}$, which implies $h_S \square l_S = h_S$.
Then one special case of Problem~\eqref{eq:pdsimpleproblem} is the group-structured regularized model fitting problem:
\begin{align*}
\Min_{x \in \vR^m} \; \iota_{C}(x) + (1/2)\|Ax - b\|^2 + \sum_{ S \in G} h_S(B_Sx).
\end{align*}
In \cite{jenatton2011proximal}, the authors consider the nonegativity constraint $C = \vR^m_{\geq 0}$ and a grouping $G$ which consists of overlapping sets $S_i$ for $i \in \{1, \cdots, m\}$ such that $S_i$ contains $i$ and all of the descendants of $i$ in $\cT$. Furthermore,  for each $S \in G$, they consider the map $L_S = I_{\vR^{|S|}}$ and the function $h_S = w_S\|(x_i)^T_{i \in S}\|_p$ where $p \in [1, \infty]$ and $w_S > 0$. This setup induces a mixed $\ell_1/\ell_p$ norm on $\vR^{m}$ of the form $\sum_{S \in G} w_S\|(x_i)_{i \in S}^T\|_p$, which tends to ``zero out" entire groups of components. Note that the sum is also highly nonseparable in the components of $x$, which can make the proximal operator of the regularization term difficult to evaluate.  If we denote $f(x) = \iota_C(x)$ and $g(x) = (1/2)\|Ax - b\|^2$, then the algorithms in this paper only utilize the projection $P_C = \prox_f$ onto $C$, the gradient $\nabla g(x) = A^\ast (Ax - b)$, and for all $S \in G$ in parallel, multiplications by the maps $B_S$ and $B_S^\ast$, and evaluations of the proximal operator of the function $h_S$. Not only does this make each iteration of the algorithm simple to implement and computationally inexpensive, it also provides a unified algorithmic framework for higher order regularizations of the components in each group, a task which might otherwise be intractable in large-scale applications.

Finally, we note that the use of infimal convolutions in applications is not wide-spread, so we list a few instances where they may be useful: Infimal convolutions are used in image recovery \cite[Section 5]{chambolle1997image} to remove staircasing effects in the total variation model. The infimal convolution of the indicator functions of two closed convex sets is the indicator function of their Minkowski sum, which has applications in motion planning for robotics \cite[Section 4.3.2]{lavalle2006planning}.  In convex analysis, the Moreau envelope of a function arises as an infimal convolution with a multiple of the squared norm \cite[Section 12.4]{bauschke2011convex}. More generally, the infimal convolution of $h_i$ and $l_i$ can be interpreted as a regularization or smoothing of $h_i$ by $l_i$ and vice versa \cite[Section 18.3]{bauschke2011convex}.

\subsection{Goals, challenges, and approaches}
This work seeks to improve the theoretical understanding of the convergence rates of primal-dual splitting schemes. In this paper, we study primal-dual algorithms that are applications of standard operator splitting algorithms in product spaces consisting of primal and dual variables. Consequently, the convergence theory for these algorithms is well-developed, and they are known to converge (weakly) under mild conditions.

Although we understand when these algorithms converge, relatively little is known about their rate of convergence. For convex optimization algorithms, the \emph{ergodic} convergence rate of the \emph{primal-dual gap} has been analyzed in a few cases \cite{chambolle2011first,boct2013convergence,bo2014convergence,doi:10.1137/130910774}.  However, even in cases where convergence rates are known, variable metrics and stepsizes, which can significantly improve practical performance of the algorithms \cite{pock2011diagonal,goldstein2013adaptive}, are not analyzed.  In addition, we are not aware of any convergence rate analysis of the primal-dual gap for the \emph{nonergodic} (or last) iterate generated by these algorithms. It is important to understand nonergodic convergence rates because the ergodic (or time-averaged) iterates can ``average out" structural properties, such as sparsity and low rank, that are shared by the solution and the nonergodic iterate.

The convergence rate analysis of the ergodic primal-dual gap largely follows from subgradient inequalities and an application of Jensen's inequality. In contrast, the techniques developed in this paper exploit the properties of the nonexpansive operators driving the algorithms to deduce the nonergodic convergence rate of the primal-dual gap.  Thus, our techniques are quite different from those used in classical convergence rate analysis and parallel the analysis developed in \cite{davis2014convergence}.

We summarize our contributions and techniques as follows:
\begin{romannum}
\item We describe a model monotone inclusion problem that generalizes many primal-dual formulations that appear in the literature. We provide a simple prototype algorithm to solve the model problem, and we deduce a fundamental inequality that bounds the primal-dual gap at each iteration of the algorithm. We then simplify the inequality in the special case of four splitting algorithms (Section~\ref{sec:US}).

\item We derive ergodic convergence rates of the variable metric forms of the relaxed proximal point algorithm (PPA), relaxed forward-backward splitting (FBS), and forward-backward-forward splitting as well as the fixed metric relaxed Peaceman-Rachford splitting (PRS) algorithm (Section~\ref{sec:ergodic}). After some algebraic simplifications, our analysis essentially follows from an application of Jensen's inequality.

\item We derive nonergodic convergence rates of relaxed PPA, relaxed FBS, and relaxed PRS (Section~\ref{sec:nonergodic}). All of our analysis follows by bounding the primal-dual gap function by a multiple of the \emph{fixed-point residual} (FPR) of the nonexpansive mapping that drives the algorithm.  Thus, we show that the size of the FPR can be used as a valid \emph{stopping criteria} for these three algorithms.

\item We apply our results to deduce ergodic and nonergodic convergence rates for a large class of primal-dual algorithms that have appeared in the literature (Section~\ref{sec:applications}).

\end{romannum}

Our analysis not only deduces the convergence rates of a large class of primal-dual algorithms found in the literature. It also serves as a resource for the analysis of future primal-dual algorithms that solve  generalizations of Problem~\ref{eq:pdsimpleproblem}, e.g., \cite{becker2013algorithm,bot2013algorithm}.

\subsection{Definitions, notation and some facts}\label{sec:notation}

In what follows, $\cH, \cG$, and $\vH$ denote (possibly infinite dimensional) Hilbert spaces. We always use the notations $\dotp{\cdot,\cdot}$ and $\|\cdot\|$ to denote the inner product and norm associated to a Hilbert space, respectively. Note that there is some ambiguity in this convention, but it simplifies the notation and no confusion should arise. The space $\vH$ will usually denote a product Hilbert space consisting of primal variables in $\cH$ and dual variables in $\cG$. Let $\vR_{++} = \{x \in \vR \mid x > 0\}$ denote the set of strictly positive real numbers. Let $\vN = \{k \in \vZ \mid k \geq 0\}$ denote the set of nonnegative integers. In all of the algorithms we consider, we utilize two stepsize sequences: the implicit sequence $(\gamma_j)_{j \in \vN} \subseteq \vR_{++}$ and the explicit sequence $(\lambda_j)_{j \in \vN} \subseteq \vR_{++}$.  We define the $k$-th partial sum of the sequence $(\gamma_j\lambda_j)_{j \in \vN}$ by the formula:
\begin{equation}\label{def:Lambda}\Sigma_k := \sum_{i=0}^k \gamma_i\lambda_i.
\end{equation}
Given a sequence  $(x^j)_{j \in \vN}\subset \cH$ and $k \in \vN$, we let $\overline{x}^k = ({1}/{\Sigma_k})\sum_{i=0}^k \gamma_i\lambda_i x^i$ denote its $k$th average  with respect to the sequence $(\gamma_j\lambda_j)_{j \in \vN}$.  We call a convergence result \emph{ergodic} if it is in terms of the sequence $(\overline{x}^j)_{j \in \vN}$, and \emph{nonergodic} if it is in terms of $(x^j)_{j \in \vN}$.

We denote the set of summable nonnegative sequences by $\ell^1_+(\vN) := \{(\eta_j)_{j \in \vN} \subseteq [0, \infty) \mid  \sum_{j = 0}^\infty \eta_j < \infty\}.$

The following definitions and facts are mostly standard and can be found in \cite{bauschke2011convex,combettes2013variable}

We let $\cB(\cH, \cG)$ denote the set of bounded linear maps from $\cH$ to $\cG$, and set $\cB(\cH) := \cB(\cH, \cH)$. We will use the notation $I_{\cH} \in \cB(\cH)$ to denote the identity map. Given a map $L \in \cB(\cH, \cG)$, we denote its adjoint by $L^\ast \in \cB(\cG, \cH)$. The operator norm on $L \in \cB(\cH, \cG)$ is defined by the following supremum: $\|L\| = \sup_{x\in \cH, \|x\|\leq 1} \|Lx\|$. Let $\rho \in \vR_+$ be a nonnegative real number. We let $\cS_\rho(\cH) \subseteq \cB(\cH)$ denote the set of linear $\rho$-strongly monotone self-adjoint maps: $$\cS_\rho(\cH) := \{ U \in \cB(\cH) \mid U = U^\ast, \left(\forall x \in \cH\right) \dotp{Ux, x} \geq \rho \|x\|^2\}.$$ We define the (semi)-norm and inner product induced by $U \in \cS_\rho(\cH)$ on $\cH$ by the formulae: for all $x, y \in \cH$, $\|x\|_U^2 := \dotp{Ux, x}$, and $\dotp{x, y}_U := \dotp{Ux, y}$.  The Loewner partial ordering on $\cS_\rho(\cH)$ is defined as follows: for all $U_1, U_2 \in \cS_\rho(\cH)$, we have 
\begin{align*}
U_1 \succcurlyeq U_2 && \Longleftrightarrow && \left(\forall x \in \cH\right) \; \|x\|_{U_1}^2 \geq \|x\|_{U_2}^2.  
\end{align*}

Let $L \geq 0$, and let $D$ be a nonempty subset of $\cH$.  A map $T : D \rightarrow \cH$ is called $L$-Lipschitz if for all $x, y \in D$, we have $\|Tx - Ty\| \leq L\|x-y\|$. In particular, $T$ is called \emph{nonexpansive} if it is $1$-Lipschitz. A map $N : D \rightarrow \cH$ is called $\lambda$-averaged \cite[Section 4.4]{bauschke2011convex} if there exists a nonexpansive map $T : D \rightarrow \cH$ and $\lambda \in (0, 1)$ such that 
\begin{align}\label{eq:averagednotation}
N = T_{\lambda}:= (1-\lambda) I_{\cH} + \lambda T.
\end{align}
A $(1/2)$-averaged map is called \emph{firmly nonexpansive}.  

Let $2^\cH$ denote the power set of $\cH$.  A set-valued operator $A : \cH \rightarrow 2^\cH$ is called \emph{monotone} if for all $x, y \in \cH$, $u \in Ax$, and $v \in Ay$, we have $\dotp{ x- y, u - v} \geq 0$. We denote the set of zeros of a monotone operator by $\zer(A) := \{x \in \cH \mid 0 \in Ax\}.$ The \emph{graph} of $A$ is denoted by $\gra(A) := \{(x, y) \mid x\in \cH, y \in Ax\}$. Evidently, $A$ is uniquely determined by its graph. A monotone operator $A$ is called \emph{maximal monotone} provided that $\gra(A)$ is not properly contained in the graph of any other monotone set-valued operator.  The \emph{inverse} of $A$, denoted by $A^{-1}$, is defined uniquely by its graph: $\gra(A^{-1}) := \{(y, x) \mid  x\in \cH, y \in Ax\}$. Let $\beta \in \vR_{++}$ be a positive real number. The operator $A$ is called \emph{$\beta$-strongly monotone} provided that for all $x, y \in \cH$,  $u \in Ax$, and $v \in Ay$, we have $\dotp{x - y, u - v} \geq \beta \|x-y\|^2$. A \emph{single-valued} operator $B : \cH \rightarrow 2^\cH$  maps each point in $\cH$ to a singleton and will be identified with the natural $\cH$-valued map it defines.  A single-valued operator $B$ is called \emph{$\beta$-cocoercive} provided that for all $x, y \in \cH$, we have $\dotp{x - y, Bx - By} \geq \beta \|Bx- By\|^2$.  Evidently, $B$ is $\beta$-cocoercive whenever $B^{-1}$ is $\beta$-strongly monotone.  The parallel sum of (not necessarily single-valued) monotone operators $A$ and $B$ is given by $A\square B := (A^{-1} + B^{-1})^{-1}$.  The \emph{resolvent} of a monotone operator $A$ is defined by the inversion $J_A := (I + A)^{-1}$.  Minty's theorem shows that  $J_A$ is single-valued and has full domain $\cH$ if, and only if,  $A$ is maximally monotone.  Note that $A$ is monotone if, and only if, $J_A$ is firmly nonexpansive.  Thus, the \emph{reflection operator}
\begin{align}\label{eq:refl}
\refl_{A} := 2J_A - I_{\cH}
\end{align}
 is nonexpansive on $\cH$ whenever $A$ is maximally monotone. If $\rho> 0$ and $U \in \cS_\rho(\cH)$, the operator $U^{-1}A$ is maximal monotone in $\dotp{\cdot, \cdot}_U$, if, and only if, $A$ is maximally monotone in $\dotp{\cdot, \cdot}$.  Let $\gamma \in (0, \infty)$. The resolvent of the map $\gamma U^{-1}A$ has the special identity: $J_{\gamma U^{-1}A} = U^{-1/2}J_{\gamma U^{-1/2} A U^{-1/2}} U^{1/2}$ \cite[Example 3.9]{combettes2012variable}.

Let $\Gamma_0(\cH)$ denote the set of closed, proper, and convex functions $f : \cH \rightarrow (-\infty, \infty]$.  Let $\dom(f) := \{x \in \cH \mid f(x) < \infty \}$. We will let $\partial f(x) : \cH \rightarrow 2^\cH$ denote the subdifferential of $f$: $\partial f(x) := \{ u\in \cH \mid \forall y \in \cH, f(y) \geq f(x) +  \dotp{y-x, u}\}$. We will always let 
\begin{align}\label{eq:thetildesub}
\tnabla f(x) \in \partial f(x) 
\end{align}
denote a subgradient of $f$ drawn at the point $x$, and the actual choice of the subgradient $\tnabla f(x)$ will always be clear from the context; note that this notation was also used in~\cite{bertsekasnotation}.  The subdifferential operator of $f$ is maximally monotone. The inverse of $\partial f$ is given by $\partial f^\ast$ where $f^\ast(y) := \sup_{x \in \cH} \{\dotp{y, x} - f(x)\}$ is the \emph{Fenchel conjugate} of $f$. If the function $f$ is $\beta$-strongly convex, then $\partial f$ is $\beta$-strongly monotone. 

 If a convex function $f : \cH \rightarrow (-\infty, \infty]$ is Fr{\'e}chet differentiable at $x \in \cH$, then $\partial f(x) = \{\nabla f(x)\}$. Suppose $f$ is convex and Fr{\'e}chet differentiable on $\cH$, and let $\beta \in \vR_{++}$ be a positive real number. Then the Baillon-Haddad theorem states that $\nabla f$ is $(1/\beta)$-Lipschitz, if, and only if, $\nabla f$ is $\beta$-cocoercive.
 
 The resolvent operator associated to $\partial f$ is called the \emph{proximal operator} and is uniquely defined by the following (strongly convex) minimization problem: $\prox_{f}(x) :=J_{\partial f}(x) = \argmin_{y \in \cH} \{f(y) + (1/2) \|y - x\|^2\}$. If $\rho> 0$, $U \in \cS_\rho(\cH)$, and $\gamma \in (0, \infty)$, the proximal operator of $f$ in the metric induced by $U$ is given by the following formula: for all $ x\in \cH$,
\begin{align}\label{eq:proximalmetric}
\prox_{\gamma f}^U(x) := J_{\gamma U^{-1} \partial f}(x) = \argmin_{y \in \cH} \left\{f(y) + \frac{1}{2\gamma} \|y - x\|_U^2\right\}.
\end{align}
The \emph{infimal convolution} of two functions $f, g: \cH \rightarrow (-\infty, \infty]$ is denoted by $f\square g: \cH \rightarrow [-\infty, \infty] : x \mapsto \inf_{y \in \cH} \{f(y) + g(x-y)\}$. The indicator function of a closed, convex set $C \subseteq \cH$ is denoted by $\iota_C : \cH \rightarrow 
\{0, \infty\}$; the indicator function is $0$ on $C$ and is $\infty$ on $\cH \backslash C$.

We will always use a $\ast$ superscript to denote a fixed point of a nonexpansive map, a zero of a monotone inclusion, or a minimizer of an optimization problem, e.g., $z^\ast$.

Finally, we call the following identity the \emph{cosine rule}:
\begin{align*}
\left(\forall x,y,z\in\cH\right) \qquad \|y-z\|^2+2\dotp{y-x,z-x}=\|y-x\|^2+\|z-x\|^2 \numberthis\label{eq:cosinerule}.
\end{align*}

\subsection{Assumptions}
~\\
\begin{assump}[Convexity]
Every function we consider is closed, proper, and convex. 
\end{assump}

Unless otherwise stated, a function is not necessarily differentiable. 

\begin{assump}[Differentiability]
Every differentiable function we consider is Fr{\'e}chet differentiable \cite[Definition 2.45]{bauschke2011convex}.
\end{assump}

We employ other assumptions throughout the paper, but we list them closer to where they are invoked.

\subsection{Basic properties of metrics}

A simple proof of the following Lemma recently appeared in \cite[Lemma 2.1]{combettes2013variable}.  It previously appeared in \cite[Section VI.2.6]{kato1995perturbation}.
\begin{lemma}[Metric properties]\label{lem:metricproperties}
Whenever $U, V \in \cS_{0}(\cH)$ satisfy the inequality $\alpha I_{\cH} \succcurlyeq U \succcurlyeq V \succcurlyeq \beta I_{\cH}$ for $\alpha, \beta > 0$, we have the ordering $(1/\beta)I_{\cH} \succcurlyeq V^{-1} \succcurlyeq U^{-1}\succcurlyeq (1/\alpha) I_{\cH}$, the inclusion $U^{-1} \in \cS_{\|U\|^{-1}}(\cH)$, and the inequality $\|U^{-1}\| \leq (1/\beta)$.
\end{lemma}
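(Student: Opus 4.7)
The plan is to reduce the lemma to a single basic fact, the operator-antitonicity of inversion: for self-adjoint operators $A \succcurlyeq B$ that are both bounded below by a strictly positive multiple of the identity, one has $B^{-1} \succcurlyeq A^{-1}$. All three conclusions of the lemma follow from this together with the Loewner characterization of the operator norm of a self-adjoint positive operator, $\|T\| = \sup_{\|x\|\leq 1} \langle Tx, x\rangle$, equivalently $T \preccurlyeq \|T\| I_\cH$.

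First I would prove the antitonicity statement. The clean route is to conjugate by $A^{-1/2}$, which exists in our infinite-dimensional setting via the Borel functional calculus applied to the self-adjoint positive operator $A$ (the spectrum of $A$ lies in a compact subset of $(0,\infty)$ because $A \succcurlyeq \beta I_\cH$). Setting $W := A^{-1/2} B A^{-1/2}$, the hypothesis $A \succcurlyeq B$ tested against vectors of the form $A^{-1/2} x$ rearranges to $I_\cH \succcurlyeq W$. A second use of the functional calculus, now applied to the self-adjoint positive operator $W$, yields $W^{-1} \succcurlyeq I_\cH$. Conjugating this last inequality by $A^{-1/2}$ and using $A^{-1/2} W^{-1} A^{-1/2} = B^{-1}$ produces $B^{-1} \succcurlyeq A^{-1}$, as required.

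With antitonicity in hand, I would apply it in turn to each link of the given chain $\alpha I_\cH \succcurlyeq U \succcurlyeq V \succcurlyeq \beta I_\cH$ (noting that each of $\alpha I_\cH$, $U$, $V$, $\beta I_\cH$ is bounded below by the common positive multiple $\beta I_\cH$), yielding the ordering $(1/\beta) I_\cH \succcurlyeq V^{-1} \succcurlyeq U^{-1} \succcurlyeq (1/\alpha) I_\cH$, which is the first conclusion. For the inclusion $U^{-1} \in \cS_{\|U\|^{-1}}(\cH)$, I would observe $U \preccurlyeq \|U\| I_\cH$ from the norm characterization and apply antitonicity once more to obtain $U^{-1} \succcurlyeq \|U\|^{-1} I_\cH$, which is exactly the definition of $\cS_{\|U\|^{-1}}(\cH)$. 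Finally, the norm bound $\|U^{-1}\| \leq 1/\beta$ falls out of the chain above: $U^{-1} \preccurlyeq (1/\beta) I_\cH$ together with the norm characterization applied to the self-adjoint positive operator $U^{-1}$ gives $\|U^{-1}\| \leq 1/\beta$.

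The only subtle point is the infinite-dimensional construction of $A^{-1/2}$ and $W^{-1}$, which requires the Borel functional calculus for self-adjoint operators; once this tool is admitted, every step reduces to one-line manipulations of the Loewner order, and I do not foresee any genuine obstacle.
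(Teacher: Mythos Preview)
Your proposal is correct and follows the standard route via operator antitonicity of inversion, established through conjugation by $A^{-1/2}$ and the functional calculus. The paper does not supply its own argument for this lemma, instead citing \cite[Lemma 2.1]{combettes2013variable} and \cite[Section VI.2.6]{kato1995perturbation}, where essentially the same square-root/spectral-calculus proof appears; your write-up is faithful to that approach.
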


\subsection{Basic properties of resolvents and averaged operators}
~\\
The following are simple modifications of standard facts found in~\cite{bauschke2011convex}.
\begin{proposition}\label{prop:basicprox}
Let $\rho > 0$, let $\lambda > 0$, let $\alpha \in (0, 1)$, let $U \in \cS_\rho(\cH)$, let $A : \cH \rightarrow \cH$ be a single-valued maximal monotone operator, and let $f \in \Gamma_0(\cH)$
\begin{remunerate}
\item\label{prop:basicprox:part:J} \textbf{Optimality conditions of $J$:} We have $x^+ := J_{\gamma U^{-1}(\partial f + A)}(x)$ if, and only if, there exists a unique subgradient $\tnabla f(x^+) := (1/\gamma) U(x - x^+) - Ax^+ \in  \partial f(x^+)$,  such that
\begin{align*}
\tnabla f(x^+)  + Ax^+ = \frac{1}{\gamma} U(x- x^+) \in \partial f(x^+) + Ax^+.
\end{align*}
\item \label{prop:basicprox:part:contract} \textbf{Averaged operator contraction property:} Let $\lambda \in (0, 1)$. A map $T : \cH \rightarrow \cH$ is $\lambda$-averaged in the metric induced by $U$ if, and only if, for all $x, y \in \cH$,
\begin{align}\label{eq:avgdecrease}
\|Tx - Ty\|_U^2 \leq \|x - y\|_U^2 - \frac{1-\lambda}{\lambda} \|(I_{\cH}- T)x - (I_{\cH} - T)y\|_U^2.
\end{align}
\item \label{prop:basicprox:part:wider}\textbf{Wider relaxations:}  A map $T : \cH \rightarrow \cH$ is $\alpha$-averaged in $\|\cdot \|_U$, if, and only if, $T_{\lambda}$ (Equation~\eqref{eq:averagednotation}) is $\lambda \alpha$-averaged in $\|\cdot \|_U$ for all $\lambda \in (0, 1/\alpha)$.  In addition, $T_{1/\alpha}$ is nonexpansive with respect to $\|\cdot\|_U$.
\end{remunerate}
\end{proposition}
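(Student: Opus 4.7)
My plan is to treat the three parts separately; each is essentially an algebraic unpacking of a definition, so none should present a serious obstacle, but care is required to track the $U$-metric consistently.

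For part~\ref{prop:basicprox:part:J}, I would simply unfold the definition of the resolvent. The relation $x^+ = J_{\gamma U^{-1}(\partial f + A)}(x)$ means $(I_{\cH} + \gamma U^{-1}(\partial f + A))(x^+) \ni x$, which, after multiplying both sides by $U/\gamma$, is equivalent to $(1/\gamma)U(x-x^+) \in \partial f(x^+) + Ax^+$. Since $A$ is single-valued, the element $(1/\gamma)U(x-x^+) - Ax^+$ is uniquely determined; calling it $\tnabla f(x^+)$ and noting that it lies in $\partial f(x^+)$ delivers the claimed identity. The content here is really just the observation from Minty's theorem that $J_{\gamma U^{-1}(\partial f + A)}$ is well-defined and single-valued once $U \in \cS_\rho(\cH)$, which is where the $U$-metric version of maximal monotonicity (recalled just above the proposition) enters.

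For part~\ref{prop:basicprox:part:contract}, I would write $T = (1-\lambda)I_{\cH} + \lambda R$ for some $R:\cH \to \cH$, so that $I_{\cH} - T = \lambda(I_{\cH} - R)$, and check that $T$ being $\lambda$-averaged in $\|\cdot\|_U$ amounts to $R$ being nonexpansive in $\|\cdot\|_U$. Expanding $\|Tx - Ty\|_U^2 = \|(1-\lambda)(x-y) + \lambda(Rx-Ry)\|_U^2$ and applying the cosine rule~\eqref{eq:cosinerule} in $\|\cdot\|_U$ to the cross term yields
\begin{align*}
\|Tx - Ty\|_U^2 = (1-\lambda)\|x-y\|_U^2 + \lambda\|Rx-Ry\|_U^2 - \lambda(1-\lambda)\|(I_{\cH}-R)x - (I_{\cH}-R)y\|_U^2.
\end{align*}
Substituting $I_{\cH} - R = (1/\lambda)(I_{\cH}-T)$ into the last term converts $\lambda(1-\lambda)$ into $(1-\lambda)/\lambda$, and then the equivalence reduces to $\|Rx-Ry\|_U^2 \leq \|x-y\|_U^2$, i.e., nonexpansiveness of $R$. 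So both directions of the equivalence are read off from this identity.

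For part~\ref{prop:basicprox:part:wider}, I would just compose the averaged decompositions. If $T = (1-\alpha)I_{\cH} + \alpha R$ with $R$ nonexpansive in $\|\cdot\|_U$, then a direct substitution into $T_\lambda = (1-\lambda)I_{\cH} + \lambda T$ gives $T_\lambda = (1-\lambda\alpha)I_{\cH} + \lambda\alpha R$, which is $\lambda\alpha$-averaged in $\|\cdot\|_U$ precisely when $\lambda\alpha \in (0,1)$, i.e., $\lambda \in (0, 1/\alpha)$; setting $\lambda = 1/\alpha$ yields $T_{1/\alpha} = R$, which is nonexpansive. Conversely, if $T_\lambda = (1-\lambda\alpha)I_{\cH} + \lambda\alpha R'$ with $R'$ nonexpansive in $\|\cdot\|_U$, inverting the relation $T_\lambda = (1-\lambda)I_{\cH} + \lambda T$ for $T$ yields $T = (1-\alpha)I_{\cH} + \alpha R'$, so $T$ is $\alpha$-averaged. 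The only (minor) thing to be careful about is that all nonexpansiveness statements are in the $U$-norm, but since everything is linear in $T$ and $R$, the metric is carried through unchanged. No single step looks hard; the main bookkeeping challenge is making sure that the $U$-metric characterization of averagedness is used consistently in part~\ref{prop:basicprox:part:contract} so that the final constants $(1-\lambda)/\lambda$ and $\lambda\alpha$ come out in the stated form.
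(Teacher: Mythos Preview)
Your proposal is correct and is precisely the standard verification one would carry out. The paper itself does not supply a proof of this proposition; it simply states that these are ``simple modifications of standard facts found in~\cite{bauschke2011convex},'' so there is nothing to compare against beyond confirming that your argument tracks the $U$-metric correctly, which it does.
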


\subsection{Variable metrics}

Throughout this paper we will consider sequences of mappings  $(U_j)_{j \in \vN} \in \cS_\rho(\cH)$ for some $\rho > 0$.  In order to apply the standard convergence theory for variable metrics, we will make the following assumption:
\begin{assump}\label{assump:variablemetric}
There exists a summable sequence $(\eta_j)_{j \in \vN} \subseteq \ell_+^1(\vN)$ such that for all $k \in \vN$, $(1+\eta_k) U_k \succcurlyeq U_{k+1}.$ In addition $\mu := \sup_{j \in \vN} \|U_{j}\| < \infty$.
\end{assump}

Assumption~\ref{assump:variablemetric} is standard in variable metric algorithms \cite{combettes2013variable,doi:10.1080/01630563.2013.763825,combettes2012variable,parente2008class}.  

\begin{remark}
There is an asymmetry in our notation and the notation of \cite{combettes2013variable,doi:10.1080/01630563.2013.763825,combettes2012variable,parente2008class}. In our analysis, the map $U \in \cS_\rho(\cH)$ induces a metric on $\cH$. In other papers, the maps $U^{-1}$ induce a metric on $\cH$. 
\end{remark}

The following notation will be used throughout the rest of the paper. The proof is elementary.
\begin{proposition}[Metric parameters]
Suppose that $(\eta_j)_{j \in \vN} \subseteq \ell_+^1(\vN)$.  Define 
\begin{align*}
\eta_{\mathrm{p}} := \prod_{i=0}^\infty (1+\eta_i) && \mathrm{and} && \eta_{\mathrm{s}} := \sum_{i=0}^\infty \eta_i.
\end{align*}
Then $\eta_{\mathrm{p}}$ and $\eta_{\mathrm{s}}$ are finite.
\end{proposition}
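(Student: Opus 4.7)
The plan is to dispatch the two claims separately, both being essentially immediate consequences of the definition of $\ell^1_+(\vN)$.

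First, the finiteness of $\eta_{\mathrm{s}}$ is simply the definition: since $(\eta_j)_{j \in \vN} \in \ell^1_+(\vN)$ means $\sum_{j=0}^\infty \eta_j < \infty$, we have $\eta_{\mathrm{s}} < \infty$ by inspection, with nothing to prove.

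The substance is in showing $\eta_{\mathrm{p}} < \infty$. The standard trick is to invoke the elementary inequality $1 + x \leq e^x$ valid for all $x \geq 0$. Applying this termwise yields, for every $n \in \vN$,
\begin{align*}
\prod_{i=0}^n (1+\eta_i) \;\leq\; \prod_{i=0}^n e^{\eta_i} \;=\; \exp\!\left(\sum_{i=0}^n \eta_i\right) \;\leq\; e^{\eta_{\mathrm{s}}}.
\end{align*}
Since each factor $(1+\eta_i) \geq 1$, the sequence of partial products is monotone nondecreasing in $n$, and the bound above shows it is also bounded, so the limit $\eta_{\mathrm{p}} = \lim_{n\to\infty}\prod_{i=0}^n (1+\eta_i)$ exists and satisfies $\eta_{\mathrm{p}} \leq e^{\eta_{\mathrm{s}}} < \infty$.

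There is no main obstacle — the only nontrivial ingredient is the inequality $1+x \leq e^x$, which reduces the convergence of the infinite product to the already-assumed convergence of the series. One could equivalently take logarithms and use $\log(1+x) \leq x$, converting the product into the partial sums of $(\eta_i)$, which are Cauchy by summability; either route is a two-line argument.
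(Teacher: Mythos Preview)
Your proof is correct; the paper itself omits the argument entirely, stating only that ``the proof is elementary,'' so your use of $1+x \leq e^x$ to bound the partial products by $e^{\eta_{\mathrm{s}}}$ is precisely the kind of standard verification the authors had in mind.
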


The following Proposition is a consequence of the proof of \cite[Theorem 5.1]{combettes2013variable}. The proof is simple, so we omit it. 
\begin{proposition}\label{prop:variablemetricKM}
Let $\cH$ be a Hilbert space. Let $\rho \in (0, \infty)$, let $(\eta_j)_{j \in \vN} \subseteq \ell^1_+(\vN)$, and let $(U_j)_{j \in \vN} \in \cS_{\rho}(\cH)$ satisfy Assumption~\ref{assump:variablemetric}. For all $k \in \vN$, let $\alpha_k \in (0, 1)$, let $\lambda_k \in (0, 1/\alpha_k]$ be a relaxation parameter, and let $T_k : \cH \rightarrow \cH$ be $\alpha_k$-averaged in the metric $\|\cdot \|_{U_k}$. Furthermore, assume that there is a point $z^\ast \in \cH$ such that $T_kz^\ast = z^\ast$ for all $k \in \vN$. Let the $(z^j)_{j \in \vN}$ be generated by the following Krasnosel'ski\u{\i}-Mann (KM)-type iteration (Equation~\eqref{eq:averagednotation}): let $z^0 \in \cH$, and for all $k \in \vN$, define
\begin{align*}
z^{k+1} = (T_k)_{\lambda_k}z^k.
\end{align*}
Then the following are true:
\begin{remunerate}
\item \label{prop:variablemetricKM:part:fejer} For all $k \in \vN$, $\|z^{k+1} - z^\ast\|_{U_{k+1}}^2 \leq (1+\eta_k)\|z^k - z^\ast\|_{U_k}^2$ and hence,
\begin{align*}
\|z^k - z^\ast\|^2_{U_{k}} &\leq \eta_{\mathrm{p}}\|z^0 - z^\ast\|_{U_0}^2.
\end{align*}
\item \label{prop:variablemetricKM:part:FPR} The following sum is finite:
\begin{align*}
\sum_{i=0}^\infty \frac{1-\alpha_i\lambda_i}{\alpha_i\lambda_i} \|z^{i+1} - z^i\|^2 \leq \frac{1}{\rho}\left( 1+ \eta_{\mathrm{p}}\eta_{\mathrm{s}}\right)\|z^0 - z^\ast\|^2_{U_0}.
\end{align*}
\end{remunerate}
\end{proposition}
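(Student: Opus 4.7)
The plan is to prove both parts by starting from the averaged contraction property of $T_k$ (Proposition~\ref{prop:basicprox}\ref{prop:basicprox:part:contract}) in the metric $\|\cdot\|_{U_k}$, passing to the relaxed map $(T_k)_{\lambda_k}$ via Proposition~\ref{prop:basicprox}\ref{prop:basicprox:part:wider}, and then converting from the $U_k$ metric to the $U_{k+1}$ metric using Assumption~\ref{assump:variablemetric}. Part~\ref{prop:variablemetricKM:part:fejer} follows by discarding the (nonpositive) quadratic correction in the resulting bound and iterating the recursion, while part~\ref{prop:variablemetricKM:part:FPR} keeps the correction term and telescopes it.

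More precisely, for a fixed $k$, since $T_k$ is $\alpha_k$-averaged and $\lambda_k \in (0,1/\alpha_k]$, the map $(T_k)_{\lambda_k}$ is $\lambda_k\alpha_k$-averaged in $\|\cdot\|_{U_k}$ (or merely nonexpansive if $\lambda_k = 1/\alpha_k$, in which case the coefficient $(1-\lambda_k\alpha_k)/(\lambda_k\alpha_k)$ vanishes and the argument still goes through). Since $T_k z^\ast = z^\ast$, we also have $(T_k)_{\lambda_k} z^\ast = z^\ast$, so applying Equation~\eqref{eq:avgdecrease} with $(x,y) = (z^k, z^\ast)$ yields
\begin{align*}
\|z^{k+1} - z^\ast\|_{U_k}^2 \leq \|z^k - z^\ast\|_{U_k}^2 - \frac{1-\lambda_k\alpha_k}{\lambda_k\alpha_k}\|z^{k+1} - z^k\|_{U_k}^2.
\end{align*}
Assumption~\ref{assump:variablemetric} gives $\|z^{k+1}-z^\ast\|_{U_{k+1}}^2 \leq (1+\eta_k)\|z^{k+1}-z^\ast\|_{U_k}^2$, and combining these produces the key one-step inequality
\begin{align*}
\|z^{k+1} - z^\ast\|_{U_{k+1}}^2 + (1+\eta_k)\frac{1-\lambda_k\alpha_k}{\lambda_k\alpha_k}\|z^{k+1}-z^k\|_{U_k}^2 \leq (1+\eta_k)\|z^k - z^\ast\|_{U_k}^2.
\end{align*}
Dropping the correction term yields part~\ref{prop:variablemetricKM:part:fejer}'s recursion, and iterating gives $\|z^k-z^\ast\|^2_{U_k} \leq \prod_{i=0}^{k-1}(1+\eta_i)\|z^0-z^\ast\|^2_{U_0} \leq \eta_{\mathrm{p}}\|z^0-z^\ast\|^2_{U_0}$.

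For part~\ref{prop:variablemetricKM:part:FPR}, I keep the correction term and use $U_k \succcurlyeq \rho I_\cH$ to lower bound $\|z^{k+1}-z^k\|_{U_k}^2 \geq \rho\|z^{k+1}-z^k\|^2$, and also use $(1+\eta_k) \geq 1$ on the left-hand side. Rearranging gives
\begin{align*}
\rho\frac{1-\lambda_k\alpha_k}{\lambda_k\alpha_k}\|z^{k+1}-z^k\|^2 \leq (1+\eta_k)\|z^k-z^\ast\|_{U_k}^2 - \|z^{k+1}-z^\ast\|_{U_{k+1}}^2.
\end{align*}
Summing over $k=0,\ldots,N$ splits the right side into a telescoping part plus the residual $\sum_{k=0}^N \eta_k \|z^k-z^\ast\|_{U_k}^2$. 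The telescoping part is bounded above by $\|z^0-z^\ast\|^2_{U_0}$, and each term in the residual is bounded using part~\ref{prop:variablemetricKM:part:fejer} by $\eta_k\eta_{\mathrm{p}}\|z^0-z^\ast\|^2_{U_0}$, whose sum over $k$ is at most $\eta_{\mathrm{p}}\eta_{\mathrm{s}}\|z^0-z^\ast\|^2_{U_0}$. Dividing by $\rho$ and letting $N\to\infty$ produces the claimed bound.

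The main bookkeeping obstacle is simply being careful about which metric each norm is measured in at each step, and ensuring that part~\ref{prop:variablemetricKM:part:fejer} is proved \emph{before} it is used to bound the residual terms in part~\ref{prop:variablemetricKM:part:FPR}; the arithmetic itself is routine once the one-step inequality is in place.
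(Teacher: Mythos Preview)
Your proof is correct and follows the natural route: apply the averaged contraction inequality~\eqref{eq:avgdecrease} to the relaxed map $(T_k)_{\lambda_k}$ (which is $\lambda_k\alpha_k$-averaged in $\|\cdot\|_{U_k}$ by Proposition~\ref{prop:basicprox}\ref{prop:basicprox:part:wider}), pass to the $U_{k+1}$ metric via Assumption~\ref{assump:variablemetric}, then iterate for part~\ref{prop:variablemetricKM:part:fejer} and telescope for part~\ref{prop:variablemetricKM:part:FPR}. The paper itself omits the proof, noting only that it is simple and a consequence of the proof of \cite[Theorem 5.1]{combettes2013variable}; your argument is precisely the standard one that reference encodes, so there is nothing to compare.
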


We will use the following proposition to select parameters in the FBS algorithm. The proof of the following fact follows from~\cite[Equation (3.35)]{vu2013splitting}

\begin{proposition}\label{prop:coco}
Let $\rho > 0$, let $\beta > 0$, let $B : \cH \rightarrow \cH$ be $\beta$-cocoercive in the norm $\|\cdot\|$, and let $U \in \cS_\rho(\cH)$.  Then $U^{-1}B$ is $\beta\rho$-cocoercive in the norm $\|\cdot\|_{U}$.
\end{proposition}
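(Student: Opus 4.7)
The plan is to unpack both sides of the desired inequality using the definitions of the $U$-inner product and the $U$-norm, reduce the claim to an inequality purely in the original inner product, and then apply the standard cocoercivity of $B$ together with the bound $U^{-1} \preccurlyeq (1/\rho) I_{\cH}$ coming from Lemma~\ref{lem:metricproperties}.

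Concretely, to establish $\beta\rho$-cocoercivity of $U^{-1}B$ in $\|\cdot\|_U$, I need to show that for all $x,y\in\cH$,
\begin{align*}
\dotp{x-y,\, U^{-1}Bx - U^{-1}By}_U \;\geq\; \beta\rho\,\|U^{-1}Bx - U^{-1}By\|_U^2.
\end{align*}
First I would rewrite the left-hand side: since $\dotp{u,v}_U = \dotp{Uu,v}$ and $U$ is self-adjoint, the $U^{-1}$ on the right slot cancels, yielding $\dotp{x-y,\, Bx-By}$. For the right-hand side, $\|U^{-1}w\|_U^2 = \dotp{U\cdot U^{-1}w,\, U^{-1}w} = \dotp{w,\, U^{-1}w}$ with $w := Bx-By$, so the claim reduces to
\begin{align*}
\dotp{x-y,\, Bx-By} \;\geq\; \beta\rho\,\dotp{Bx-By,\, U^{-1}(Bx-By)}.
\end{align*}

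Next I would chain two bounds. The $\beta$-cocoercivity of $B$ in $\|\cdot\|$ gives $\dotp{x-y,\, Bx-By}\geq \beta\|Bx-By\|^2$. It then suffices to prove $\|Bx-By\|^2 \geq \rho\,\dotp{Bx-By,\, U^{-1}(Bx-By)}$, i.e., $\rho U^{-1} \preccurlyeq I_{\cH}$. But $U\in\cS_\rho(\cH)$ means $U\succcurlyeq \rho I_{\cH}$, and Lemma~\ref{lem:metricproperties} (applied with the ordering $\|U\|I_{\cH}\succcurlyeq U\succcurlyeq \rho I_{\cH}$) yields $U^{-1}\preccurlyeq (1/\rho) I_{\cH}$, equivalently $\rho U^{-1}\preccurlyeq I_{\cH}$. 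Combining the two bounds completes the proof.

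There is no real obstacle here; the statement is essentially a change-of-metric bookkeeping exercise. The only subtlety is keeping the self-adjointness of $U$ and the cancellation pattern straight when moving $U$ in and out of the inner product, and correctly invoking Lemma~\ref{lem:metricproperties} to control $U^{-1}$ from above by $(1/\rho)I_{\cH}$.
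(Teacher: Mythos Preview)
Your proof is correct. The paper does not give its own argument for this proposition but simply cites \cite[Equation (3.35)]{vu2013splitting}; your direct verification via the identities $\dotp{x-y,U^{-1}(Bx-By)}_U=\dotp{x-y,Bx-By}$ and $\|U^{-1}w\|_U^2=\dotp{w,U^{-1}w}$ together with Lemma~\ref{lem:metricproperties} is exactly the standard computation behind that citation.
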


The following proposition essentially follows from the proof of \cite[Theorem 3.1]{doi:10.1080/01630563.2013.763825}. 
\begin{proposition}\label{prop:variablemetricFBF}
 Let $A : \cH \rightarrow 2^\cH$ be maximal monotone,  let $B : \cH \rightarrow \cH$ be monotone and $(1/\beta)$-Lipschitz for some $\beta > 0$, let $\rho > 0$, let $(U_j)_{j \in \vN} \subseteq \cS_{\rho}(\cH)$ satisfy Assumption~\ref{assump:variablemetric}, and let $(\gamma_j)_{j \in \vN} \subseteq (0, \rho\beta]$.  Let $(z^j)_{j \in \vN}$ be a sequence of points defined by the iteration:  let $z^0 \in \cH$ and for all $k \in \vN$, define
 \begin{align*}
y^k &= z^k - \gamma_k U_k^{-1} Bz^k;\\
x^k &= J_{\gamma_k U_k^{-1} A}(y^k); \\
w^k &= x^k - \gamma_k U_k^{-1} Bx^k; \\
z^{k+1} &= z^k - y^{k} + w^k.
 \end{align*}
 Suppose that $\zer(A + B) \neq \emptyset$. Then for all $z^\ast \in \zer(A + B)$ and for all $k \in \vN$, we have, $\|z^{k+1} - z^\ast\|_{U_{k+1}}^2 \leq (1+\eta_k)\|z^k - z^\ast\|_{U_k}^2$.
\end{proposition}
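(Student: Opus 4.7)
The plan is to adapt Tseng's classical FBF ``error-correction'' calculation to the variable metric $\|\cdot\|_{U_k}$, using the resolvent identity from Proposition~\ref{prop:basicprox}\ref{prop:basicprox:part:J} in $\|\cdot\|_{U_k}$ and Lemma~\ref{lem:metricproperties} to translate between $\|\cdot\|_{U_k}$, $\|\cdot\|_{U_k^{-1}}$, and the ambient norm. First, I would simplify the update by substituting the definitions of $y^k$ and $w^k$ into $z^{k+1}=z^k-y^k+w^k$ to obtain the compact form
\begin{equation*}
z^{k+1} = x^k + \gamma_k U_k^{-1}(Bz^k - Bx^k).
\end{equation*}
Then the polarization identity $2\dotp{a,b}_{U_k} = \|a+b\|_{U_k}^2 - \|a\|_{U_k}^2 - \|b\|_{U_k}^2$ applied to $a = x^k - z^\ast$ and $b = \gamma_k U_k^{-1}(Bz^k - Bx^k)$, together with $\|U_k^{-1}v\|_{U_k}^2 = \|v\|_{U_k^{-1}}^2$, yields
\begin{equation*}
\|z^{k+1}-z^\ast\|_{U_k}^2 = \|x^k-z^\ast\|_{U_k}^2 + 2\gamma_k\dotp{x^k-z^\ast,\,Bz^k-Bx^k} + \gamma_k^2\|Bz^k-Bx^k\|_{U_k^{-1}}^2.
\end{equation*}

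Next I would handle the cross term. By Proposition~\ref{prop:basicprox}\ref{prop:basicprox:part:J} applied with $f=0$, $x^k = J_{\gamma_k U_k^{-1}A}(y^k)$ is equivalent to $(1/\gamma_k)U_k(y^k-x^k)\in Ax^k$; since $y^k-x^k = (z^k-x^k)-\gamma_k U_k^{-1}Bz^k$ and $-Bz^\ast\in Az^\ast$, monotonicity of $A$ gives
\begin{equation*}
\gamma_k\dotp{x^k-z^\ast,\,Bz^k-Bz^\ast} \leq \dotp{x^k-z^\ast,\,z^k-x^k}_{U_k}.
\end{equation*}
Combining with monotonicity of $B$ (which makes $\dotp{x^k-z^\ast,Bz^\ast-Bx^k}\leq 0$) and again using the polarization identity (with $(x^k-z^\ast)+(z^k-x^k)=z^k-z^\ast$) bounds the cross term by $\|z^k-z^\ast\|_{U_k}^2 - \|x^k-z^\ast\|_{U_k}^2 - \|z^k-x^k\|_{U_k}^2$.

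For the quadratic error term I would use Lemma~\ref{lem:metricproperties} twice: $U_k\succcurlyeq \rho I$ gives both $\|v\|_{U_k^{-1}}^2\leq (1/\rho)\|v\|^2$ and $\|v\|^2\leq (1/\rho)\|v\|_{U_k}^2$, and the $(1/\beta)$-Lipschitz assumption on $B$ yields
\begin{equation*}
\gamma_k^2\|Bz^k-Bx^k\|_{U_k^{-1}}^2 \leq \frac{\gamma_k^2}{\rho^2\beta^2}\|z^k-x^k\|_{U_k}^2 \leq \|z^k-x^k\|_{U_k}^2,
\end{equation*}
where the last step uses $\gamma_k\leq\rho\beta$. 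Plugging this and the cross-term bound into the polarization expansion makes the $\pm\|x^k-z^\ast\|_{U_k}^2$ and $\pm\|z^k-x^k\|_{U_k}^2$ terms cancel, leaving $\|z^{k+1}-z^\ast\|_{U_k}^2 \leq \|z^k-z^\ast\|_{U_k}^2$. Applying Assumption~\ref{assump:variablemetric} (i.e.\ $U_{k+1}\preccurlyeq(1+\eta_k)U_k$) to the left-hand side finishes the proof.

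The main obstacle is bookkeeping the three different inner products: the residual term naturally lives in $\|\cdot\|_{U_k^{-1}}$, the resolvent argument produces quantities in $\|\cdot\|_{U_k}$, and the Lipschitz estimate on $B$ is stated in the ambient norm; the crucial role of the stepsize restriction $\gamma_k\leq\rho\beta$ is precisely to absorb the two powers of $1/\rho$ picked up when passing between these metrics so that the Lipschitz error does not exceed the negative $-\|z^k-x^k\|_{U_k}^2$ term produced by the monotonicity argument.
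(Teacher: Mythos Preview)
Your proposal is correct and follows exactly the standard Tseng FBF Fej\'er-monotonicity argument adapted to the variable metric, which is precisely what the paper invokes by citing \cite[Theorem 3.1]{doi:10.1080/01630563.2013.763825} rather than writing out a proof. The computation you give (the compact update $z^{k+1}=x^k+\gamma_kU_k^{-1}(Bz^k-Bx^k)$, the monotonicity-of-$A$-and-$B$ bound on the cross term via the resolvent inclusion, and the $\|\cdot\|_{U_k^{-1}}\to\|\cdot\|\to\|\cdot\|_{U_k}$ chain for the Lipschitz residual) is essentially the same argument the paper later reproduces in a slightly different guise when bounding the FBF upper key term in Proposition~\ref{prop:upperkeyterms}.
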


%%End Section 1
%%%%%%%%%%%

%%%%%%%%%%%
%%Begin Section 2
\section{The unifying scheme}\label{sec:US}

In this section, we introduce a prototype monotone inclusion problem that generalizes and summarizes many primal-dual problem formulations found in the literature.  After we describe the problem, we will introduce an abstract unifying scheme that generalizes many existing primal-dual algorithms.  We will describe how to measure convergence of the unifying scheme, and introduce a fundamental inequality that bounds our measure of convergence. Finally, we will identify the key terms in the fundamental inequality and simplify them in the case of several abstract splitting algorithms. 

In Section~\ref{sec:applications}, we will show that this unifying scheme relates to many existing algorithms, and extend the convergence rate results of those methods.

\subsection{Problem and algorithm}

We focus on the following problem:

\begin{problem}[Prototype primal-dual problem]\label{prob:PDprototype}
Let $(\vH, \dotp{\cdot, \cdot})$ be a Hilbert space, let $\vf, \vg \in \Gamma_0(\vH)$, and let $\vS : \vH \rightarrow \vH$ be a skew symmetric map: $\vS^\ast = -\vS$. Then the prototype primal-dual problem is to find $\vx^\ast \in \vH$ such that
\begin{align}\label{eq:PDinclusion}
0 \in \partial \vf(\vx^\ast) + \partial \vg(\vx^\ast) + \vS \vx^\ast.
\end{align}
\end{problem}

Evidently, Problem~\ref{prob:PDprototype} is a \emph{monotone inclusion problem} because $\partial \vf, \partial \vg$, and $\vS$ are maximally monotone operators on $\vH$ \cite[Example 20.30]{bauschke2011convex}. 

We are now ready to define our unifying scheme.  

\begin{algorithm}[H]
\SetKwInOut{Input}{input}\SetKwInOut{Output}{output}
\SetKwComment{Comment}{}{}
\BlankLine
\Input{$\vz^0 \in \vH; (\lambda_j)_{j\geq 0} \subseteq \vR_{++}; (\gamma_j)_{j \in \vN} \subseteq \vR_{++};  \rho > 0; (U_j)_{j \in \vN} \subseteq \cS_\rho(\vH).$}
\For{$k=0,~1,\ldots$}{
$\vz^{k+1} = \vz^k - \gamma_k\lambda_k U_k^{-1} \left( \tnabla \vf(\vx_\vf^k) + \tnabla \vg(\vx_\vg^k) + \vS\vx_{\vS}^k\right)$\;}
\caption{{Unifying scheme}}
\label{alg:US}
\end{algorithm}

Note that the points $\vx_\vf^k, \vx_\vg^k$, and $\vx_\vS^k$ as well as the subgradients $\tnabla \vf(\vx_\vf^k) \in \partial \vf(\vx_\vf^k)$ and $\tnabla \vg(\vx_\vg^k) \in \partial \vg(\vx_\vg^k)$ are unspecified in the description of Algorithm~\ref{alg:US}.  In the algorithms we study, these points and subgradients will be generated by proximal and forward gradient operators and, thus, can be determined given $\vz^k$; see Section~\ref{sec:examplesofUS} for examples. However, Algorithm~\ref{alg:US} is only meant to illustrate the algebraic form that our analysis addresses, and it is not meant to be an actual algorithm that solves Problem~\ref{eq:PDinclusion}. The positive scalar sequence $(\lambda_j)_{j \in \vN}$ consists of \emph{relaxation parameters}, or explicit stepsize parameters, whereas the sequence $(\gamma_j)_{j \in \vN}$ consists of \emph{proximal parameters}, or implicit stepsize parameters. The strongly monotone maps $(U_j)_{j \in \vN}$ induce the metrics used in each iteration of the algorithm.

In all of our applications, $\vH$ will be a product space of primal and dual variables. In this setting, $\vf$ and $\vg$ will be block-separable maps, and $\vg$ will sometimes be differentiable. The map $\vS$ ``mixes" the primal and dual variable sequences in the product space.  Mixing is necessary, because the sequences are otherwise uncoupled.  

The sequence of maps $(U_j)_{j \in \vN}$ is employed for two purposes. First, the maps are used because the evaluation of the resolvent $J_{\partial \vf + \vS}$, which is a basic building block of most of the algorithms we study, may not be simple. Thus, the primal-dual algorithms that we study formulate special metrics induced by $U \in \cS_\rho(\vH)$ such that $J_{U^{-1}(\partial \vf + \vS)}$ is as easy to evaluate as $\prox_{\vf}$ (See Section~\ref{sec:applications}). Hence, in our analysis we must at least consider fixed metrics that are different from the standard product metric on $\vH$. Second, we allow the metrics to vary at each iteration because it can significantly improve the practical performance of the algorithm, e.g., by employing second order information, or even simple time-varying diagonal metrics \cite{pock2011diagonal,goldstein2013adaptive}.

\subsection{Examples of the unifying scheme}\label{sec:examplesofUS}
In this section we introduce four algorithms and show that they are special cases of Algorithm~\ref{alg:US}. We will also introduce several assumptions on the algorithm parameters that ensure convergence.  These assumptions will remain in effect throughout the rest of the paper.  Note that the convergence theory of the methods in this section is well-studied. See~\cite{bauschke2011convex,combettes2013variable,vu2013splitting,combettes2012variable,rockafellar1976monotone,tseng2000modified,lions1979splitting} for background. Finally, we will say that several algorithms in this section are \emph{relaxed}. For brevity, we will drop this adjective whenever convenient.

The relaxed variable metric PPA applies to problems in which  $\vg \equiv 0$.

\begin{algorithm}[H]
\SetKwInOut{Input}{input}\SetKwInOut{Output}{output}
\SetKwComment{Comment}{}{}
\BlankLine
%\dontprintsemicolon
\Input{$\vz^0 \in \vH; (\lambda_j)_{j\geq 0} \subseteq (0,2]; (\gamma_j)_{j \in \vN} \subseteq \vR_{++};  \rho > 0; (U_j)_{j \in \vN} \subseteq \cS_\rho(\vH).$}
\For{$k=0,~1,\ldots$}{
$\vx_\vg^k = \vz^k$\;
$\vx_\vf^k  = J_{\gamma_kU_k^{-1}(\partial \vf + \vS)}(\vz^k)$\;
$\vz^{k+1} = (1-\lambda_k) \vz^k + \lambda_k\vx_\vf^k$\;}
\caption{{Relaxed variable metric proximal point algorithm (PPA)}}
\label{alg:PPA}
\end{algorithm}

The relaxed variable metric FBS algorithm can be applied whenever $\vg$ is differentiable and $\nabla \vg$ is $({1}/{\beta})$-Lipschitz for some $\beta > 0$. 

\begin{algorithm}[H]
\SetKwInOut{Input}{input}\SetKwInOut{Output}{output}
\SetKwComment{Comment}{}{}
\BlankLine
%\dontprintsemicolon
\Input{$\vz^0 \in \vH; \rho > 0; \varepsilon \in (0, 2\beta\rho);$ \\ $(\gamma_j)_{j \in \vN} \subseteq (0, 2\beta\rho - \varepsilon];$  \\$\alpha_{k} := (2\beta\rho)/(4\beta\rho - \gamma_k)$ for $k \in \vN$; \\ $ \delta \in (0, \inf\{1/\alpha_j \mid j \in \vN\});\quad$ \textit{//comment: this interval is nonempty} \\ $  \lambda_k \in (0,1/\alpha_{k} - \delta]$ for $k \in \vN;$ \\$ (U_j)_{j \in \vN} \subseteq \cS_\rho(\vH).$}
\For{$k=0,~1,\ldots$}{
$\vx_\vg^k = \vz^k$\;
$\vx_\vf^k  = J_{\gamma_kU_k^{-1}(\partial \vf + \vS)}(\vz^k - \gamma_kU_k^{-1} \nabla \vg(\vz^k))$\;
$\vz^{k+1} = (1-\lambda_k) \vz^k + \lambda_k\vx_\vf^k$\;}
\caption{{Relaxed variable metric forward-backward algorithm (FBS)}}
\label{alg:PDFBS}
\end{algorithm}

In the relaxed PRS algorithm, we fix the metric and the implicit stepsize parameters throughout the course of the algorithm.  We do this because the fixed-points of the PRS operator can vary with $\gamma$ and $U$.  Thus, changing these parameters will lead to an algorithm that ``chases" a new fixed-point at each iteration. 

\begin{algorithm}[H]
\SetKwInOut{Input}{input}\SetKwInOut{Output}{output}
\SetKwComment{Comment}{}{}
\BlankLine
%\dontprintsemicolon
\Input{$\vz^0 \in \vH; (\lambda_j)_{j \in \vN} \subseteq (0, 2]; \gamma > 0; \rho > 0; U \in \cS_\rho(\vH); w \in \vR.$}
\For{$k=0,~1,\ldots$}{
$\vz^{k+1} = (1-\frac{\lambda_k}{2})\vz^k + \frac{\lambda_k}{2}\refl_{\gamma U^{-1}(\partial \vf + w\vS)} \circ \refl_{\gamma U^{-1}(\partial \vg + (1-w)\vS)}(\vz^k)$\;}
\caption{{Relaxed Peaceman-Rachford splitting (PRS)}}
\label{alg:PDPRS}
\end{algorithm}

The variable metric FBF algorithm can be applied whenever $\vg$ is differentiable and $\nabla \vg$ is $({1}/{\beta})$-Lipschitz for some $\beta > 0$.

\begin{algorithm}[H]
\SetKwInOut{Input}{input}\SetKwInOut{Output}{output}
\SetKwComment{Comment}{}{}
\BlankLine
%\dontprintsemicolon
\Input{$\vz^0 \in \vH;  \rho > 0; (U_j)_{j \in \vN} \subseteq \cS_\rho(\vH);  (\gamma_j)_{j \in \vN} \subseteq (0, \rho/\left(\beta^{-1} + \|\vS\|\right)).$}
\For{$k=0,~1,\ldots$}{
$\vy^k = \vz^k - \gamma_kU_k^{-1}(\nabla \vg(\vz^k) + \vS\vz^k)$\;
$\vx_\vf^k = J_{\gamma_kU_k^{-1}\partial \vf}(\vy^k)$\;
$\vw^k = \vx_\vf^k - \gamma_kU_k^{-1}(\nabla \vg(\vx_\vf^k) + \vS\vx_\vf^k)$\;
$\vz^{k+1} = \vz^k - \vy^k + \vw^k;$}
\caption{{Variable metric forward-backward-forward algorithm (FBF)}}
\label{alg:PDFBF}
\end{algorithm}

The following lemma relates the above algorithms to the unifying scheme.
\begin{lemma}\label{lem:spunif}
Algorithms~\ref{alg:PPA},~\ref{alg:PDFBS},~\ref{alg:PDPRS}, and~\ref{alg:PDFBF} are special cases of the unifying scheme. In particular, the following hold for all $k \in \vN$:
\begin{remunerate}
\item\label{lem:spunif:part:PPA} In Algorithm~\ref{alg:PPA}, we have $\vx_\vg^k := \vz^k$, $\vx_\vS^k := \vx_\vf^k,$ and $$\tnabla \vf(\vx_\vf^k) := (1/\gamma_k)U_k(\vz^k - \vx_\vf^k) - \vS\vx_\vf^k \in \partial \vf(\vx_\vf^k).$$ 
\item\label{lem:spunif:part:PDFBS} In Algorithm~\ref{alg:PDFBS}, we have $\vx_\vg^k := \vz^k$, $\vx_\vS^k := \vx_\vf^k$, and $$\tnabla f(\vx_\vf^k) := (1/\gamma_k)U_k(\vz^k - \gamma_k U^{-1}_k \nabla g(\vz^k) - \vx_\vf^k) - \vS\vx_\vf^k \in \partial \vf(\vx_\vf^k).$$
\item\label{lem:spunif:part:PDPRS} In Algorithm~\ref{alg:PDPRS}, we have $\vz^{k+1} - \vz^{k} = \lambda_k(\vx_{\vf}^k - \vx_{\vg}^k)$ for
\begin{align*}
\vx_\vg^k := J_{\gamma U^{-1}(\partial \vg + (1-w)\vS)}(\vz^k); && && \vx_\vf^k := J_{\gamma U^{-1}(\partial \vf + w\vS)} \circ\refl_{\gamma U^{-1}(\partial \vg + (1-w)\vS)}(\vz^k);\\
 \vx_\vS^k := w\vx_\vf^k + (1-w) \vx_\vg^k;  && &&  \tnabla \vg(\vx_\vg^k): = (1/\gamma)U(\vz^k - \vx_\vg^k)  - (1-w)\vS\vx_\vg^k \in \partial \vg(\vx_\vg^k); 
\end{align*}
and $\tnabla \vf(\vx_\vf^k) := (1/\gamma)U(2\vx_\vg^k - \vz^k - \vx_\vf^k) - w\vS\vx_\vf^k\in \partial \vf(\vx_\vf^k)$. 
\item\label{lem:spunif:part:PDFBF} In Algorithm~\ref{alg:PDFBF}, we have $\lambda_k = 1$, $\vx_\vg^k := \vx_\vf^k$, $\vx_\vS^k := \vx_\vf^k$, and $$\tnabla \vf(\vx_\vf^k) := (1/\gamma_k) U_k(\vy^k - \vx_\vf^k) \in \partial \vf(\vx_\vf^k) .$$ 
\end{remunerate}
\end{lemma}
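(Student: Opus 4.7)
The plan is to verify each of the four parts by (a) applying the resolvent optimality condition of Proposition~\ref{prop:basicprox}\eqref{prop:basicprox:part:J} to identify the subgradients $\tnabla\vf(\vx_\vf^k)$ and $\tnabla\vg(\vx_\vg^k)$ produced implicitly by each algorithm, and then (b) substituting the proposed values of $\vx_\vf^k,\vx_\vg^k,\vx_\vS^k,\tnabla\vf(\vx_\vf^k),\tnabla\vg(\vx_\vg^k)$ into the unifying update
\[
\vz^{k+1}=\vz^k-\gamma_k\lambda_kU_k^{-1}\bigl(\tnabla\vf(\vx_\vf^k)+\tnabla\vg(\vx_\vg^k)+\vS\vx_\vS^k\bigr),
\]
and simplifying to recover the update rule written in the corresponding algorithm. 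In every case the key tool is the identity $(1/\gamma_k)U_k(x-x^+)=\tnabla f(x^+)+Ax^+$ of Proposition~\ref{prop:basicprox}\eqref{prop:basicprox:part:J}, which converts an implicit resolvent step into an explicit expression for the subgradient.

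For Part~\eqref{lem:spunif:part:PPA} (PPA), I would set $\vg\equiv 0$ and apply Proposition~\ref{prop:basicprox}\eqref{prop:basicprox:part:J} with $f=\vf$, $A=\vS$, $x=\vz^k$, $x^+=\vx_\vf^k$ to identify $\tnabla\vf(\vx_\vf^k)=(1/\gamma_k)U_k(\vz^k-\vx_\vf^k)-\vS\vx_\vf^k$; then the skew term $\vS\vx_\vS^k=\vS\vx_\vf^k$ cancels and the unifying update collapses to $\vz^{k+1}=(1-\lambda_k)\vz^k+\lambda_k\vx_\vf^k$. For Part~\eqref{lem:spunif:part:PDFBS} (FBS) the same reasoning applies, but the proposition is now applied with $x=\vz^k-\gamma_kU_k^{-1}\nabla\vg(\vz^k)$; after substitution, the explicit $\nabla\vg(\vz^k)$ term in the unifying scheme cancels the forward step encoded in the argument of the resolvent, again recovering $\vz^{k+1}=(1-\lambda_k)\vz^k+\lambda_k\vx_\vf^k$. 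For Part~\eqref{lem:spunif:part:PDFBF} (FBF) I would use $\lambda_k=1$, $A=0$ in Proposition~\ref{prop:basicprox}\eqref{prop:basicprox:part:J} to obtain $\tnabla\vf(\vx_\vf^k)=(1/\gamma_k)U_k(\vy^k-\vx_\vf^k)$, and then check that the unifying update telescopes to $\vz^k-\vy^k+\vw^k$ after recognising $\vw^k=\vx_\vf^k-\gamma_kU_k^{-1}(\nabla\vg(\vx_\vf^k)+\vS\vx_\vf^k)$.

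The main obstacle will be Part~\eqref{lem:spunif:part:PDPRS} (PRS), because two nested resolvents must be unpacked and the skew operator is split between them via the parameter $w$. Here I will apply Proposition~\ref{prop:basicprox}\eqref{prop:basicprox:part:J} twice: once with $f=\vg$, $A=(1-w)\vS$, $x=\vz^k$, $x^+=\vx_\vg^k$, and once with $f=\vf$, $A=w\vS$, $x=2\vx_\vg^k-\vz^k=\refl_{\gamma U^{-1}(\partial\vg+(1-w)\vS)}(\vz^k)$, $x^+=\vx_\vf^k$, yielding the two claimed subgradient formulas. Summing $\tnabla\vf(\vx_\vf^k)+\tnabla\vg(\vx_\vg^k)+\vS(w\vx_\vf^k+(1-w)\vx_\vg^k)$, the skew-symmetric contributions cancel exactly because the weighting $\vx_\vS^k=w\vx_\vf^k+(1-w)\vx_\vg^k$ is aligned with the splitting of $\vS$ inside the two resolvents, and the $U$-terms collapse to $(1/\gamma)U(\vx_\vg^k-\vx_\vf^k)$. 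Substituting this into the unifying update and multiplying by $-\gamma\lambda_kU^{-1}$ gives $\vz^{k+1}=\vz^k+\lambda_k(\vx_\vf^k-\vx_\vg^k)$, which I then match against the reflected formulation of Algorithm~\ref{alg:PDPRS} by writing $\refl_{\gamma U^{-1}(\partial\vf+w\vS)}\circ\refl_{\gamma U^{-1}(\partial\vg+(1-w)\vS)}(\vz^k)=2\vx_\vf^k-(2\vx_\vg^k-\vz^k)$, so that $(1-\lambda_k/2)\vz^k+(\lambda_k/2)\,[2\vx_\vf^k-2\vx_\vg^k+\vz^k]=\vz^k+\lambda_k(\vx_\vf^k-\vx_\vg^k)$, completing the identification.
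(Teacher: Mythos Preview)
Your proposal is correct and follows essentially the same approach as the paper: both rely on Proposition~\ref{prop:basicprox}\eqref{prop:basicprox:part:J} to extract the subgradients, and the arithmetic you outline for each part matches the paper's computations. The only cosmetic difference is in Part~\eqref{lem:spunif:part:PDPRS}, where the paper expands the composition of reflections directly into the unifying form, while you first plug the proposed subgradients into the unifying update and then verify it coincides with the relaxed PRS step; the two directions are inverses of the same calculation.
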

{\em Proof.}
Fix $k \in \vN$, and note that the subgradient identities all follow from Part~\ref{prop:basicprox:part:J} of Proposition~\ref{prop:basicprox}.

Part~\ref{lem:spunif:part:PPA}: This is immediate.

Part~\ref{lem:spunif:part:PDFBS}: From Part~\ref{prop:basicprox:part:J} of Proposition~\ref{prop:basicprox},  we have the following identity: 
\begin{align*}
\vx_\vf^k  &= \vz^k - \gamma_kU_k^{-1} \left( \tnabla \vf(\vx_\vf^k) + \nabla \vg(\vx_\vg^k) + \vS\vx_{\vS}^k\right).
\end{align*}
Thus, altogether we have $\vz^{k+1} = \vz^k - \gamma_k\lambda_k U_k^{-1} \left( \tnabla \vf(\vx_\vf^k) + \nabla \vg(\vx_\vg^k) + \vS\vx_{\vS}^k\right).$

Part~\ref{lem:spunif:part:PDPRS}: We have
\begin{align*}
&\refl_{\gamma U^{-1}(\partial \vf + w\vS)} \circ \refl_{\gamma U^{-1}(\partial \vg + (1-w)\vS)}(\vz^k) \\
&= \refl_{\gamma U^{-1}(\partial \vf + w\vS)} ( \vz^k - 2\gamma U^{-1}(\tnabla  \vg(\vx_\vg^k) + (1-w)\vS\vx_\vg^k))\\
&=  \vz^k - 2\gamma U^{-1}(\tnabla \vf(\vx_\vf^k) + \tnabla  \vg(\vx_\vg^k) + \vS(w\vx_\vf^k + (1-w)\vx_\vg^k)).
\end{align*}
Therefore, if we define $\vx_\vS^k := w\vx_\vf^k + (1-w)\vx_\vg^k$, then
\begin{align*}
\vz^{k+1} &= \vz^k - \gamma \lambda_k U^{-1} \left( \tnabla \vf(\vx_\vf^k) + \tnabla \vg(\vx_\vg^k) + \vS\vx_{\vS}^k\right).
\end{align*}

Part~\ref{lem:spunif:part:PDFBF}: We have
\begin{align*}
\vz^{k+1} - \vz^k = \vw^k - \vy^k = \vw^k - \vx_\vf^k + \vx_\vf^k - \vy^k = -\gamma_kU_k^{-1} \left( \tnabla \vf(\vx_\vf^k) + \nabla \vg(\vx_\vf^k) + \vS\vx_{\vf}^k\right). \endproof
\end{align*}

\subsubsection{Convergence properties}

Now we establish two basic and well known results on the boundedness and summability of various terms related to the above algorithms. These facts will be used repeatedly in our convergence rate analysis.

\begin{proposition}[Averagedness properties]\label{prop:avgeraged}
Let $\rho \in \vR_{++}$, let $U \in \cS_\rho(\vH)$, let $\gamma \in \vR_{++}$, and let $\beta \in \vR_{++}$. Then the following hold:
\begin{remunerate}
\item\label{prop:avgeraged:part:resolve} The operator $J_{\gamma U ^{-1}(\partial \vf + \vS)}$ is $(1/2)$-averaged in the norm $\|\cdot\|_{U}$. In addition, the set of fixed points of $J_{\gamma U ^{-1}(\partial \vf + \vS)}$ is equal to $\zer\left(\partial \vf + \vS \right)$
\item\label{prop:avgeraged:part:FBS} Let $\gamma \in (0, 2\beta\rho)$. Suppose that $\vg$ is differentiable and $\nabla \vg$ is $(1/\beta)$-Lipschitz.  Then the composition
\begin{align}\label{eq:forwardbackwardoperator}
T^{U, \gamma}_{\mathrm{FBS}} :=  J_{\gamma U ^{-1}(\partial \vf + \vS)} \circ (I_{\vH}- \gamma U^{-1} \nabla \vg)
\end{align}
is $\alpha_{\rho, \gamma}$-averaged in the norm $\|\cdot\|_{U}$ where 
\begin{align}\label{eq:alphafbs}
\alpha_{\rho, \gamma} &:= \frac{2\beta\rho}{4\beta\rho - \gamma}.
\end{align}
In addition, the set of fixed points of $T^{U, \gamma}_{\mathrm{FBS}}$ is equal to $\zer\left(\partial \vf + \nabla \vg + \vS \right)$.
\item\label{prop:avgeraged:part:PRS} Let $w \in \vR$, and define the PRS operator:
\begin{align}\label{eq:TPRS}
\TPRS := \refl_{\gamma U^{-1}(\partial \vf + w\vS)} \circ \refl_{\gamma U^{-1}(\partial \vg + (1-w)\vS)}.
\end{align}
Then $\TPRS$ is nonexpansive in the metric $\|\cdot \|_U$.  Thus, the following DRS operator
\begin{align*}
(\TPRS)_{1/2} &= \frac{1}{2}I_{\vH} + \frac{1}{2} \refl_{\gamma U^{-1}(\partial \vf + w\vS)} \circ \refl_{\gamma U^{-1}(\partial \vg + (1-w)\vS)} \numberthis\label{eq:DRSoperator}
\end{align*}
is $(1/2)$-averaged. In addition, the set of fixed points of $\TPRS$ and $(\TPRS)_{1/2}$ coincide and $\zer(\partial \vf + \partial \vg + \vS) = \{J_{\gamma U^{-1}(\partial \vg + (1-w)\vS)}(\vz) \mid \vz \in \vH \; \mathrm{and} \;  \TPRS\vz = \vz\}$.
\end{remunerate}
\end{proposition}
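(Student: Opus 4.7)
The plan is to leverage the standard dictionary between maximal monotone operators, their resolvents, and averaged operators in the metric induced by $U$, which the excerpt has already built. The key structural observation is that $\vS$ is skew symmetric and linear with full domain, so $\partial \vf + \vS$, $\partial \vg + \vS$, and all rescalings $\partial \vf + w\vS$, $\partial \vg + (1-w)\vS$ are maximally monotone on $\vH$ (sum of a maximal monotone with a continuous everywhere-defined linear monotone operator). By the identity recalled in Section~\ref{sec:notation}, passing from the inner product $\dotp{\cdot,\cdot}$ to $\dotp{\cdot,\cdot}_U$ converts $\partial \vf + \vS$ into the maximal monotone operator $U^{-1}(\partial \vf + \vS)$. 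For Part~\ref{prop:avgeraged:part:resolve} I would then invoke Minty's theorem and the classical equivalence ``$A$ monotone $\Leftrightarrow J_A$ firmly nonexpansive'' in the $U$-metric to conclude that $J_{\gamma U^{-1}(\partial \vf + \vS)}$ is $(1/2)$-averaged in $\|\cdot\|_U$; the fixed-point characterization is immediate from $\vx = J_A(\vx) \Leftrightarrow 0 \in A\vx$, combined with the fact that $U^{-1}$ has trivial kernel.

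For Part~\ref{prop:avgeraged:part:FBS} my route is to combine three facts: (i) by Baillon--Haddad applied to $\vg$, $\nabla \vg$ is $\beta$-cocoercive in $\|\cdot\|$; (ii) Proposition~\ref{prop:coco} then promotes this to $\beta\rho$-cocoercivity of $U^{-1}\nabla \vg$ in $\|\cdot\|_U$, which is the hypothesis needed for the standard result that $I_{\vH} - \gamma U^{-1}\nabla \vg$ is $\gamma/(2\beta\rho)$-averaged in $\|\cdot\|_U$ whenever $\gamma \in (0, 2\beta\rho)$; (iii) the composition formula for averaged operators states that the composition of an $\alpha_1$-averaged with an $\alpha_2$-averaged map is $(\alpha_1 + \alpha_2 - 2\alpha_1\alpha_2)/(1 - \alpha_1\alpha_2)$-averaged. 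Plugging $\alpha_1 = 1/2$ (from Part~\ref{prop:avgeraged:part:resolve}) and $\alpha_2 = \gamma/(2\beta\rho)$ into this formula should yield exactly $\alpha_{\rho,\gamma} = 2\beta\rho/(4\beta\rho - \gamma)$ after simplification. The fixed-point set calculation then follows by unraveling the resolvent: $\vx = T^{U,\gamma}_{\mathrm{FBS}}\vx$ iff $\vx - \gamma U^{-1}\nabla \vg(\vx) \in \vx + \gamma U^{-1}(\partial \vf + \vS)\vx$, which upon multiplying by $U/\gamma$ is exactly $0 \in \partial \vf(\vx) + \nabla \vg(\vx) + \vS\vx$.

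For Part~\ref{prop:avgeraged:part:PRS} note that for any $w \in \vR$, $(w\vS)^\ast = w\vS^\ast = -w\vS$, so $w\vS$ is still skew symmetric; hence $\partial \vf + w\vS$ and $\partial \vg + (1-w)\vS$ are maximally monotone. Both reflections in \eqref{eq:TPRS} are therefore nonexpansive in $\|\cdot\|_U$ by the identity \eqref{eq:refl} applied in the $U$-metric, so $\TPRS$ is nonexpansive as a composition of nonexpansive maps, and the definition of ``averaged'' then gives \eqref{eq:DRSoperator} immediately. For the fixed-point characterization I would run the usual DRS computation: setting $B := \gamma U^{-1}(\partial \vg + (1-w)\vS)$, $A := \gamma U^{-1}(\partial \vf + w\vS)$, and $\vx := J_B(\vz)$, the equation $\refl_A \refl_B \vz = \vz$ is equivalent to $J_A(\refl_B\vz) = \vx$, which after writing out both resolvents in inclusion form gives $\vz - \vx \in B\vx$ and $\vx - \vz \in A\vx$; adding these two inclusions yields $0 \in (A + B)\vx$ in the $U$-metric, equivalently $0 \in \partial \vf(\vx) + \partial \vg(\vx) + \vS\vx$ (the $w$-dependence cancels). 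Conversely, any such $\vx$ produces a fixed point $\vz$ by reversing the construction.

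The main obstacle will be the algebraic verification in Part~\ref{prop:avgeraged:part:FBS} that the composition-of-averaged-maps formula evaluates precisely to $\alpha_{\rho,\gamma} = 2\beta\rho/(4\beta\rho - \gamma)$, and making sure the cocoercivity metric change of Proposition~\ref{prop:coco} is invoked consistently with the metric in which $\gamma U^{-1}\nabla \vg$ acts; everything else is a direct combination of facts already catalogued in Section~\ref{sec:notation} and Proposition~\ref{prop:basicprox}.
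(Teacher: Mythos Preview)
Your proposal is correct and follows essentially the same route as the paper: Parts~\ref{prop:avgeraged:part:resolve} and~\ref{prop:avgeraged:part:PRS} are dismissed there as standard facts from \cite{bauschke2011convex}, and for Part~\ref{prop:avgeraged:part:FBS} the paper invokes Baillon--Haddad plus Proposition~\ref{prop:coco} to get $\beta\rho$-cocoercivity in $\|\cdot\|_U$, then the $\gamma/(2\beta\rho)$-averagedness of the forward step, and finally the Ogura--Yamada composition formula (your $(\alpha_1+\alpha_2-2\alpha_1\alpha_2)/(1-\alpha_1\alpha_2)$), which with $\alpha_1=1/2$, $\alpha_2=\gamma/(2\beta\rho)$ indeed simplifies to $2\beta\rho/(4\beta\rho-\gamma)$. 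Your anticipated ``main obstacle'' is thus a routine algebraic check.
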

\begin{proof}
Parts~\ref{prop:avgeraged:part:resolve} and~\ref{prop:avgeraged:part:PRS} are simple modifications of standard facts found in~\cite{bauschke2011convex}.

Part~\ref{prop:avgeraged:part:FBS}: Note that $U^{-1} \nabla \vg$ is $\beta\rho$-cocoercive in $\|\cdot\|_U$ by Proposition~\ref{prop:coco} and the Baillon-Haddad theorem \cite{baillon1977quelques}.  Thus, $I_{\vH} - \gamma U^{-1} \nabla \vg$ is $\gamma/(2\beta\rho)$ averaged in $\|\cdot\|_U$ by~\cite[Proposition 4.33]{bauschke2011convex}. Thus, the formula for $\alpha_{\rho, \gamma}$ follows from \cite[Theorem 3(b)]{nobhuiko}.  The fixed-point identity follows from a simple modification of~\cite[Theorem 25.1]{bauschke2011convex}.
\end{proof}~\\

\begin{proposition}[Bounded and summable sequences]\label{prop:bddsum}
The following hold:
\begin{remunerate}
\item\label{prop:bddsum:part:PPA} Let $\vz^\ast \in \zer(\partial \vf +  \vS)$. Then in Algorithm~\ref{alg:PPA}, we have for all $k \in \vN$, that $\|\vz^{k+1} - \vz^\ast\|_{U_{k+1}}^2 \leq (1+\eta_k)\|\vz^k - \vz^\ast\|_{U_k}^2$ and hence, $\|\vz^k - \vz^\ast\|^2_{U_{k}} \leq \eta_{\mathrm{p}}\|\vz^0 - \vz^\ast\|_{U_0}^2.$
\item\label{prop:bddsum:part:PDFBS}  Let $\vz^\ast \in \zer(\partial \vf + \nabla \vg + \vS)$. Then in Algorithm~\ref{alg:PDFBS}, the following are true:
\begin{romannum}
\item \label{prop:bddsum:part:PDFBS:bdd}For all $k \in \vN$, 
$\|\vz^{k+1} - \vz^\ast\|_{U_{k+1}}^2 \leq (1+\eta_k)\|\vz^k - \vz^\ast\|_{U_k}^2$ and hence, $\|\vz^{k} - \vz^\ast\|_{U_{k}}^2 \leq \eta_{\mathrm{p}}\|\vz^0 - \vz^\ast\|_{U_0}^2.$
\item\label{prop:bddsum:part:PDFBS:sum}
The following sum is finite:
\begin{align}\label{eq:FBSsumbound}
\sum_{i=0}^\infty \frac{1-\alpha_i\lambda_i}{\alpha_i\lambda_i} \|\vz^{i+1} - \vz^i\|^2 \leq \frac{1}{\rho}\left( 1+ \eta_{\mathrm{p}}\eta_{\mathrm{s}}\right)\|\vz^0 - \vz^\ast\|^2_{U_0}.
\end{align}
\end{romannum}
\item\label{prop:bddsum:part:PDPRS} Let $\vz^\ast$ be a fixed-point of $\TPRS$. Then in Algorithm~\ref{alg:PDPRS}, we have for all $k \in \vN$, that $\|\vz^{k+1} - \vz^\ast\|_{U}^2 \leq \|\vz^k - \vz^\ast\|_{U}^2$ and hence, $\|\vz^{k} - \vz^\ast\|_{U}^2 \leq \|\vz^0 - \vz^\ast\|_U^2$.
\item \label{prop:bddsum:part:PDFBF} Let $\vz^\ast \in \zer(\partial \vf + \nabla \vg + \vS)$. Then in Algorithm~\ref{alg:PDFBF}, we have for all $k \in \vN$ that $\|\vz^{k+1} - \vz^\ast\|_{U_{k+1}}^2 \leq (1+\eta_k)\|\vz^k - \vz^\ast\|_{U_k}^2$ and hence, $\|\vz^k - \vz^\ast\|^2_{U_{k}} \leq \eta_{\mathrm{p}}\|\vz^0 - \vz^\ast\|_{U_0}^2.$
\end{remunerate}
\end{proposition}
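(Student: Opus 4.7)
The plan is to recognize each of Algorithms~\ref{alg:PPA},~\ref{alg:PDFBS}, and~\ref{alg:PDPRS} as a Krasnosel'ski\u{\i}--Mann (KM)-type iteration of an averaged operator that fixes $\vz^\ast$, and then invoke Proposition~\ref{prop:variablemetricKM}; Algorithm~\ref{alg:PDFBF} fits the template of Proposition~\ref{prop:variablemetricFBF} directly. In every case the per-step inequality $\|\vz^{k+1} - \vz^\ast\|_{U_{k+1}}^2 \leq (1+\eta_k)\|\vz^k - \vz^\ast\|_{U_k}^2$ is what the abstract result supplies, and the uniform bound $\|\vz^k - \vz^\ast\|_{U_k}^2 \leq \eta_{\mathrm{p}} \|\vz^0 - \vz^\ast\|_{U_0}^2$ is obtained by iterating the per-step inequality and invoking the definition $\eta_{\mathrm{p}} = \prod_{i=0}^{\infty}(1+\eta_i)$. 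So the real work is a modest amount of bookkeeping: identifying the driving operator in each scheme and checking that the parameter intervals prescribed in each algorithm match those required by the abstract propositions.

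For Part~\ref{prop:bddsum:part:PPA} (PPA), I would set $T_k := J_{\gamma_k U_k^{-1}(\partial \vf + \vS)}$. By Proposition~\ref{prop:avgeraged}.\ref{prop:avgeraged:part:resolve}, each $T_k$ is $(1/2)$-averaged in $\|\cdot\|_{U_k}$ and fixes every $\vz^\ast \in \zer(\partial \vf + \vS)$. Since $\lambda_k \in (0,2]=(0,1/\alpha_k]$ with $\alpha_k = 1/2$, Algorithm~\ref{alg:PPA} is exactly $\vz^{k+1} = (T_k)_{\lambda_k}\vz^k$, so Proposition~\ref{prop:variablemetricKM}.\ref{prop:variablemetricKM:part:fejer} applies. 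For Part~\ref{prop:bddsum:part:PDPRS} (PRS), set $T := (\TPRS)_{1/2}$, which is $(1/2)$-averaged in $\|\cdot\|_U$ by Proposition~\ref{prop:avgeraged}.\ref{prop:avgeraged:part:PRS}, with the same fixed points as $\TPRS$. A short algebraic check shows $T_{\lambda_k}\vz^k = (1-\lambda_k/2)\vz^k + (\lambda_k/2)\TPRS\vz^k$, so Algorithm~\ref{alg:PDPRS} is $\vz^{k+1}=T_{\lambda_k}\vz^k$; applying Proposition~\ref{prop:variablemetricKM}.\ref{prop:variablemetricKM:part:fejer} with the constant metric $U_k \equiv U$ (whence $\eta_k \equiv 0$ and $\eta_{\mathrm{p}}=1$) yields the claim.

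For Part~\ref{prop:bddsum:part:PDFBS} (FBS), I would take $T_k := T^{U_k,\gamma_k}_{\mathrm{FBS}}$ from~\eqref{eq:forwardbackwardoperator}; Proposition~\ref{prop:avgeraged}.\ref{prop:avgeraged:part:FBS} shows that $T_k$ is $\alpha_k$-averaged in $\|\cdot\|_{U_k}$ with $\alpha_k$ as in~\eqref{eq:alphafbs} and fixes $\vz^\ast$. The parameter choice $\lambda_k \in (0, 1/\alpha_k - \delta]$ keeps $\alpha_k\lambda_k$ bounded away from $1$, which is precisely what Proposition~\ref{prop:variablemetricKM} requires; Part~\ref{prop:variablemetricKM:part:fejer} of that proposition delivers the Fejér inequality~(i), while Part~\ref{prop:variablemetricKM:part:FPR} delivers the summability bound~\eqref{eq:FBSsumbound}. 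For Part~\ref{prop:bddsum:part:PDFBF} (FBF), apply Proposition~\ref{prop:variablemetricFBF} with $A := \partial \vf$ (maximally monotone) and $B := \nabla \vg + \vS$: as $\nabla \vg$ is $(1/\beta)$-Lipschitz and $\vS$ is linear and skew, $B$ is monotone and $(\beta^{-1}+\|\vS\|)$-Lipschitz, so with $\beta' := 1/(\beta^{-1}+\|\vS\|)$ the stated condition $\gamma_k < \rho/(\beta^{-1}+\|\vS\|)=\rho\beta'$ matches the hypothesis $(\gamma_j)\subseteq(0,\rho\beta']$ of Proposition~\ref{prop:variablemetricFBF}, and the conclusion follows immediately. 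The main obstacle across all four parts is essentially ensuring that the algorithm-specific parameter intervals are contained in the intervals prescribed by the abstract propositions; no new analytic estimates are needed.
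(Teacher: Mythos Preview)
Your proposal is correct and follows essentially the same approach as the paper: it too derives Parts~\ref{prop:bddsum:part:PPA},~\ref{prop:bddsum:part:PDFBS}, and~\ref{prop:bddsum:part:PDPRS} by applying Proposition~\ref{prop:variablemetricKM} to the operator sequences $(J_{\gamma_j U_j^{-1}(\partial \vf + \vS)})_{j \in \vN}$, $(T_{\mathrm{FBS}}^{U_j,\gamma_j})_{j \in \vN}$, and $((\TPRS)_{1/2})_{j \in \vN}$, respectively, and Part~\ref{prop:bddsum:part:PDFBF} by applying Proposition~\ref{prop:variablemetricFBF} with $A=\partial \vf$ and $B=\nabla \vg + \vS$. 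Your write-up is in fact more detailed than the paper's, which dispatches the whole proof in two sentences.
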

\begin{proof}
Parts~\ref{prop:bddsum:part:PPA},~\ref{prop:bddsum:part:PDFBS}, and~\ref{prop:bddsum:part:PDPRS} follow from Proposition~\ref{prop:variablemetricKM} applied to the sequences of operators $(T_j)_{j \in \vN} := (J_{\gamma_j U_j^{-1}(\partial \vf + \vS)})_{j \in \vN}$, $(T_j)_{j \in \vN} := (T_{\mathrm{FBS}}^{U_j, \gamma_j})_{j \in \vN}$, and $(T_j)_{j \in \vN} := ((\TPRS)_{1/2})_{j \in \vN}$, respectively.

Part~\ref{prop:bddsum:part:PDFBF} follows from Proposition~\ref{prop:variablemetricFBF} applied to the the maximal monotone operator $\partial \vf$ and the $(\beta^{-1} + \|\vS\|)$-Lipschitz operator $\nabla \vg + \vS$.
\end{proof}

\subsection{The fundamental inequality}

This section describes the pre-primal-dual gap (Definition~\ref{defi:preprimaldualgap}).  We use the pre-primal-dual gap to measure the convergence of the unifying scheme.  In Section~\ref{sec:applications}, we will show that under certain conditions, the pre-primal-dual gap function bounds the primal and dual objective errors of the iterates generated by a class of primal-dual algorithms.

Before we introduce the gap function, we analyze the optimality conditions of Problem~\ref{prob:PDprototype}. The following lemma is well-known.
\begin{lemma}\label{lem:variationalinequality}
Let $\vx^\ast \in \vH$. Suppose that $\vx^\ast$ solves Problem~\ref{prob:PDprototype}. Then for all $\vx \in \vH,$ 
\begin{align}\label{eq:variationalinequality}
 \vf(\vx) + \vg(\vx) + \dotp{\vS\vx, -\vx^\ast} - \vf(\vx^\ast) - \vg(\vx^\ast) &\geq 0.
\end{align}

On the other hand, if $\partial (\vf+ \vg)(\vx^\ast) = \partial \vf (\vx^\ast)+ \partial \vg(\vx^\ast)$ and $\vx^\ast$ satisfies Equation~\eqref{eq:variationalinequality} for all $\vx \in \dom(\vf)\cap\dom(\vg)$, then $\vx^\ast$ solves Problem~\ref{prob:PDprototype}.
\end{lemma}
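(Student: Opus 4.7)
The whole argument hinges on the two elementary identities that follow from skew-symmetry of $\vS$: for every $\vx, \vy \in \vH$ we have $\dotp{\vS\vx, \vy} = -\dotp{\vx, \vS\vy}$, and in particular $\dotp{\vS\vx^\ast, \vx^\ast} = 0$. I will treat the two implications separately.

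For the forward direction, the plan is to unpack the inclusion $0 \in \partial \vf(\vx^\ast) + \partial \vg(\vx^\ast) + \vS\vx^\ast$ into the existence of subgradients $u \in \partial \vf(\vx^\ast)$ and $v \in \partial \vg(\vx^\ast)$ with $u+v = -\vS\vx^\ast$. Applying the defining subgradient inequalities of $\vf$ and $\vg$ at $\vx^\ast$ against an arbitrary test point $\vx$ and adding them yields
\begin{align*}
\vf(\vx) + \vg(\vx) \;\geq\; \vf(\vx^\ast) + \vg(\vx^\ast) + \dotp{\vx - \vx^\ast, -\vS\vx^\ast}.
\end{align*}
Using $\dotp{\vx^\ast, \vS\vx^\ast} = 0$ and the adjoint identity, the right-hand cross-term simplifies as $\dotp{\vx - \vx^\ast, -\vS\vx^\ast} = -\dotp{\vx, \vS\vx^\ast} = \dotp{\vS\vx, \vx^\ast} = -\dotp{\vS\vx, -\vx^\ast}$, which is exactly \eqref{eq:variationalinequality} after rearrangement.

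For the reverse direction, the plan is to run these manipulations backwards under the stated sum-rule hypothesis. Restricting $\vx$ to $\dom(\vf) \cap \dom(\vg)$ is harmless since outside that set the inequality is automatic ($\vf(\vx) + \vg(\vx) = +\infty$), so I may treat \eqref{eq:variationalinequality} as holding on all of $\vH$. Rewriting $\dotp{\vS\vx, -\vx^\ast} = \dotp{\vx, \vS\vx^\ast} = \dotp{\vx - \vx^\ast, \vS\vx^\ast}$ (again using $\dotp{\vx^\ast, \vS\vx^\ast} = 0$), the inequality becomes
\begin{align*}
(\vf+\vg)(\vx) \;\geq\; (\vf+\vg)(\vx^\ast) + \dotp{\vx - \vx^\ast, -\vS\vx^\ast} \qquad \text{for all } \vx \in \vH,
\end{align*}
which by definition means $-\vS\vx^\ast \in \partial(\vf+\vg)(\vx^\ast)$. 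Invoking the hypothesis $\partial(\vf+\vg)(\vx^\ast) = \partial \vf(\vx^\ast) + \partial \vg(\vx^\ast)$ then decomposes $-\vS\vx^\ast$ as $u+v$ with $u \in \partial\vf(\vx^\ast)$ and $v \in \partial \vg(\vx^\ast)$, which is precisely the inclusion \eqref{eq:PDinclusion}.

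There is no real obstacle in the argument; the only point requiring a little care is the extension from the effective-domain inequality to an inequality on all of $\vH$ needed to apply the definition of the subdifferential in the reverse direction, and this is immediate from the convention $\vf,\vg \in \Gamma_0(\vH)$ being $(-\infty,\infty]$-valued. The sum-rule assumption in the converse is the standard hypothesis under which a variational inequality characterizes the primal-dual inclusion, and it is what prevents the implication from being established in full generality.
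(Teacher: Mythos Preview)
Your proof is correct and follows essentially the same approach as the paper's own proof: both directions boil down to recognizing that the inclusion $0 \in \partial\vf(\vx^\ast) + \partial\vg(\vx^\ast) + \vS\vx^\ast$ is equivalent to $-\vS\vx^\ast \in \partial(\vf+\vg)(\vx^\ast)$ (modulo the sum rule), and then using skew-symmetry to rewrite the subgradient inequality in the form~\eqref{eq:variationalinequality}. Your write-up is simply more explicit about the inner-product manipulations and about why restricting to $\dom(\vf)\cap\dom(\vg)$ loses nothing in the converse, whereas the paper compresses these into one sentence each.
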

\begin{proof}
If $\vx^\ast$ solves Problem~\ref{prob:PDprototype}, then $-\vS\vx^\ast$ is a subgradient of $\vf + \vg$ at the point $\vx^\ast$. Thus, Equation~\eqref{eq:variationalinequality} follows after noting that $\dotp{\vS\vx, \vx} = 0$ for all $\vx \in \vH$.

The other direction follows because Equation~\eqref{eq:variationalinequality} characterizes the set of subgradients of the form $-\vS\vx^\ast \in \partial (\vf + \vg)(\vx^\ast) = \partial \vf(\vx^\ast) + \partial \vg(\vx^\ast)$.
\end{proof}

See \cite[Corollary 16.38]{bauschke2011convex} for conditions that imply additivity of the subdifferential.

Lemma~\ref{lem:variationalinequality} motivates the following definition:

\begin{definition}[Pre-primal-dual gap]\label{defi:preprimaldualgap}
Let the setting be as in Algorithm~\ref{alg:US}.  Define the pre-primal dual gap function by the formula: for all $\vx_\vf, \vx_\vg, \vx_{\vS}, \vx \in \vH$, let
\begin{align}
\cG^{\mathrm{pre}}(\vx_\vf, \vx_\vg, \vx_{\vS}; \vx) &=  \vf(\vx_\vf) + \vg(\vx_\vg) + \dotp{\vS\vx_{\vS}, -\vx} -  \vf(\vx) -  \vg(\vx).
\end{align}
\end{definition}
\indent We name $\cG^{\mathrm{pre}}$ the pre-primal-dual-gap function after the standard primal-dual gap function that appears in \cite{chambolle2011first,bo2014convergence,boct2014convergence}.  We use the word ``pre" because the standard primal-dual gap function usually involves a supremum over the last variable $\vx$.  Note that if $\partial (\vf + \vg)(\vx') = \partial \vf(\vx') + \partial \vg(\vx')$ and
\begin{align}\label{eq:ppdgsup}
\sup_{\vx \in \vH} \cG^{\mathrm{pre}}(\vx', \vx', \vx'; \vx) \leq 0,
\end{align}
then $\vx'$ is a solution of Problem~\ref{prob:PDprototype} (Lemma~\ref{lem:variationalinequality}).

Our goal throughout the rest of this paper is to bound the pre-primal-dual gap when $\vx_\vf = \vx_\vg = \vx_\vS$.  Because of Equation~\eqref{eq:ppdgsup}, all of our upper bounds will be a function of the norm of the last component of $\cG^{\mathrm{pre}}$. In some cases, we can restrict the supremum in Equation~\eqref{eq:ppdgsup} to a smaller subset $C \subseteq \vH$. This is the case if, for example, $\dom(\vf) \cap \dom(\vg)$ is bounded. Whenever the supremum can be restricted, we obtain a meaningful convergence rate.

Finally, Lemma~\ref{lem:variationalinequality} shows that for all $\vx \in \vH$,
\begin{align}\label{eq:positivegapfunction}
\cG^{\mathrm{pre}}(\vx, \vx, \vx; \vx^\ast) \geq 0
\end{align}
whenever $\vx^\ast$ solves Problem~\ref{prob:PDprototype}. See Section~\ref{sec:preprimaldualprimal} for other lower bounds of the pre-primal-dual gap in the context of a particular convex optimization problem.

The following is our main tool to bound the pre-primal-dual gap.  
\begin{proposition}[Upper fundamental inequality for primal dual schemes]\label{prop:PDupper}
Suppose that $(\vz^j)_{j\geq 0}$ is generated by Algorithm~\ref{alg:US}, and let $\vx\in \vH$. Then the following inequality holds: for all $k \in \vN$,
\begin{align*}
2\gamma_k\lambda_k \cG^{\mathrm{pre}}(\vx_\vf^k, \vx_\vg^k, \vx_{\vS}^k; \vx)  &\leq \|\vz^{k} - \vx\|_{U_k}^2 - \|\vz^{k+1} - \vx\|_{U_k}^2 - \|\vz^{k+1} - \vz^k\|_{U_k}^2 \\
&+ 2\gamma_k\lambda_k \dotp{\vx_{\vf}^k - \vz^{k+1}, \tnabla \vf(\vx_{\vf}^k)} \\
&+ 2\gamma_k\lambda_k  \dotp{\vx_{\vg}^k - \vz^{k+1}, \tnabla \vg(\vx_{\vg}^k)}\\
&+ 2\gamma_k\lambda_k  \dotp{-\vz^{k+1}, \vS\vx_{\vS}^k}. \numberthis \label{eq:PDupper}
\end{align*} 
\end{proposition}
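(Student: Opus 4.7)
The plan is to bound the pre-primal-dual gap by combining three elementary ingredients: the subgradient inequalities for $\vf$ and $\vg$, the skew-symmetry of $\vS$, and the cosine rule applied in the metric $\|\cdot\|_{U_k}$, with the algorithm's update rule used to identify the sum of subgradients/skew terms with a multiple of $U_k(\vz^k - \vz^{k+1})$.

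First, I would apply the subgradient inequalities
$\vf(\vx_\vf^k) - \vf(\vx) \leq \dotp{\vx_\vf^k - \vx, \tnabla\vf(\vx_\vf^k)}$ and $\vg(\vx_\vg^k) - \vg(\vx) \leq \dotp{\vx_\vg^k - \vx, \tnabla\vg(\vx_\vg^k)}$, and use the skew-symmetry identity $\dotp{\vS\vx_\vS^k,\vx_\vS^k}=0$ to rewrite $\dotp{\vS\vx_\vS^k,-\vx} = \dotp{\vx_\vS^k - \vx, \vS\vx_\vS^k}$. Combining these gives
\begin{align*}
\cG^{\mathrm{pre}}(\vx_\vf^k,\vx_\vg^k,\vx_\vS^k;\vx) &\leq \dotp{\vx_\vf^k-\vx,\tnabla\vf(\vx_\vf^k)} + \dotp{\vx_\vg^k-\vx,\tnabla\vg(\vx_\vg^k)} + \dotp{\vx_\vS^k-\vx, \vS\vx_\vS^k}.
\end{align*}

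Next, in each inner product I insert $\pm \vz^{k+1}$, i.e.\ write $\vx_\vf^k - \vx = (\vx_\vf^k - \vz^{k+1}) + (\vz^{k+1} - \vx)$ (and similarly for the other two). This splits the bound into three ``residual'' terms of the form $\dotp{\vx_\ast^k - \vz^{k+1}, \cdot}$ (which already match the last three lines of the target inequality, using skew-symmetry once more to replace $\dotp{\vx_\vS^k - \vz^{k+1}, \vS\vx_\vS^k}$ with $\dotp{-\vz^{k+1},\vS\vx_\vS^k}$), plus a single ``common'' term $\dotp{\vz^{k+1} - \vx,\, \tnabla\vf(\vx_\vf^k) + \tnabla\vg(\vx_\vg^k) + \vS\vx_\vS^k}$.

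For this common term I would invoke the update rule of Algorithm~\ref{alg:US}, which reads $\tnabla\vf(\vx_\vf^k) + \tnabla\vg(\vx_\vg^k) + \vS\vx_\vS^k = (\gamma_k\lambda_k)^{-1}U_k(\vz^k - \vz^{k+1})$. Multiplying the whole inequality by $2\gamma_k\lambda_k$, the common term becomes $2\dotp{\vz^{k+1}-\vx, \vz^k - \vz^{k+1}}_{U_k}$. Finally, apply the cosine rule~\eqref{eq:cosinerule} in the $U_k$-inner product with $(x,y,z) = (\vz^{k+1}, \vz^k, \vx)$ to obtain
\begin{align*}
2\dotp{\vz^{k+1}-\vx, \vz^k - \vz^{k+1}}_{U_k} = \|\vz^k - \vx\|_{U_k}^2 - \|\vz^{k+1} - \vx\|_{U_k}^2 - \|\vz^{k+1} - \vz^k\|_{U_k}^2,
\end{align*}
which produces the first line of the target inequality.

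The argument is essentially linear bookkeeping, so the main obstacle is purely organizational: ensuring the two uses of skew-symmetry match the asymmetric form $\dotp{-\vz^{k+1},\vS\vx_\vS^k}$ on the right-hand side, and orienting the cosine-rule identity so the sign of the FPR term $-\|\vz^{k+1}-\vz^k\|_{U_k}^2$ and the telescoping pair $\|\vz^k-\vx\|_{U_k}^2 - \|\vz^{k+1}-\vx\|_{U_k}^2$ come out correctly. No extra hypothesis on $\vf,\vg,\vS$ beyond those already in Problem~\ref{prob:PDprototype} is needed.
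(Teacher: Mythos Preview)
Your proposal is correct and takes essentially the same approach as the paper: subgradient inequalities for $\vf$ and $\vg$, skew-symmetry of $\vS$, the algorithm update to identify the aggregate direction with $U_k(\vz^k-\vz^{k+1})$, and the cosine rule in $\|\cdot\|_{U_k}$. The only difference is the order of presentation---the paper starts from the norm expansion $\|\vz^{k+1}-\vx\|_{U_k}^2$ and works toward the gap, while you start from the gap and work toward the norm identity---but the ingredients and logic are identical.
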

\begin{proof}
Fix $k \in \vN$. First expand the norm:
\begin{align*}
\|\vz^{k+1} - \vx\|_{U_k}^2 &= \|\vz^{k} - \vx\|_{U_k}^2 + 2\dotp{\vx - \vz^{k+1}, \vz^{k} - \vz^{k+1}}_{U_k} - \|\vz^{k+1} - \vz^k\|^2_{U_k}.
\end{align*}
Now we expand the inner product:
\begin{align*}
2\dotp{\vx - \vz^{k+1}, \vz^{k} - \vz^{k+1}}_{U_k} &= 2\dotp{\vx - \vz^{k+1},  \gamma_k\lambda_k U_k^{-1} \left( \tnabla \vf(\vx_\vf^k) + \tnabla \vg(\vx_\vg^k) + \vS\vx_{\vS}^k\right)}_{U_k} \\
&= 2\gamma_k\lambda_k\dotp{\vx - \vz^{k+1}, \tnabla \vf(\vx_\vf^k)} + 2\gamma_k\lambda_k\dotp{\vx - \vz^{k+1}, \tnabla \vg(\vx_\vg^k)} \\
&+ 2\gamma_k\lambda_k\dotp{\vx - \vz^{k+1}, \vS\vx_\vS^k}.
\end{align*}
We add and subtract a point in the inner products involving $\vf$ and $\vg$ and use the subgradient inequality to get:
\begin{align*}
2\gamma_k\lambda_k\dotp{\vx - \vz^{k+1}, \tnabla \vf(\vx_\vf^k)} &\leq 2\gamma_k\lambda_k \dotp{\vx_{\vf}^k - \vz^{k+1}, \tnabla \vf(\vx_{\vf}^k)} + 2\gamma_k\lambda_k(\vf(\vx) - \vf(\vx_\vf^k)); \\
2\gamma_k\lambda_k\dotp{\vx - \vz^{k+1}, \tnabla \vg(\vx_\vg^k)} &\leq 2\gamma_k\lambda_k \dotp{\vx_{\vg}^k - \vz^{k+1}, \tnabla \vg(\vx_{\vg}^k)} + 2\gamma_k\lambda_k(\vg(\vx) - \vg(\vx_\vg^k)).
\end{align*}
Therefore Equation~\eqref{eq:PDupper} follows after rearranging.
\end{proof}

The upper fundamental inequality in Proposition~\ref{prop:PDupper} bounds the pre-primal-dual gap with the sum of an alternating sequence and a key term.

\begin{definition}[Upper key term]\label{defi:PDkeyterm}
Let $(\vz^j)_{j \in \vN}$ be generated by Algorithm~\ref{alg:US}.  For all $k \in \vN$, we define the fundamental upper key term
\begin{align*}
\kappa_u^k(\lambda_k) := &- \|\vz^{k+1} - \vz^k\|_{U_k}^2 \\
&+ 2\gamma_k\lambda_k \dotp{\vx_{\vf}^k - \vz^{k+1}, \tnabla \vf(\vx_{\vf}^k)}  \\
&+ 2\gamma_k\lambda_k  \dotp{\vx_{\vg}^k - \vz^{k+1}, \tnabla \vg(\vx_{\vg}^k)} \\
&+ 2\gamma_k\lambda_k   \dotp{-\vz^{k+1}, \vS\vx_{\vS}^k}. \numberthis \label{eq:PDupperkeyterm}
\end{align*}
\end{definition}

The value $\kappa_u^k(\lambda_k)$ depends on the entire history of Algorithm~\ref{alg:US} up to and including iteration $k$, but in our analysis we will only view $\kappa_u^k(\lambda_k)$ as a function of the parameter $\lambda_k$. Throughout the rest of the paper, we will often make the dependence of the upper key term on $\lambda_k$ implicit, and denote $\kappa_u^k := \kappa_u^k(\lambda_k)$. However, in the proof of Theorem~\ref{sec:nonergodic} we will need to keep the dependence explicit. 

\subsubsection{Computing the  upper key terms}

The following proposition will compute the upper key terms induced by the PPA, FBS,  PRS, and FBF algorithms. See Section~\ref{sec:examplesofUS} for the definitions of the points $\vx_\vf^k, \vx_\vg^k$, and $\vx_\vS^k$.

\begin{proposition}[Computing the upper key terms]\label{prop:upperkeyterms}
Let $(\vz^j)_{j \in \vN}$ be generated by Algorithm~\ref{alg:US}.
Then for all $k \in \vN$, the following inequalities and identities hold: 
\begin{remunerate}
\item\label{prop:upperkeyterms:part:PPA} In Algorithm~\ref{alg:PPA}, we have $\kappa_u^k(\lambda_k) =  \left(1- 2/\lambda_k\right) \|\vz^{k+1} - \vz^{k}\|_{U_k}^2.$
\item In Algorithm~\ref{alg:PDFBS}, we have
\begin{align*}
\kappa_u^k(\lambda_k) &\leq  \left(\rho- \frac{\varepsilon}{\beta\lambda_k}\right) \|\vz^{k+1} - \vz^{k}\|_{}^2 + 2\gamma_k\lambda_k\vg(\vx_\vg^k) - 2\gamma_k\lambda_k\vg(\vx_\vf^k).
\end{align*}
\item In Algorithm~\ref{alg:PDPRS}, we have $\kappa_u^k(\lambda_k) =  \left(1- 2/\lambda_k\right) \|\vz^{k+1} - \vz^{k}\|_{U}^2.$
\item In Algorithm~\ref{alg:PDFBF}, we have $\kappa_u^k(\lambda_k) \leq  0.$
\end{remunerate}
\end{proposition}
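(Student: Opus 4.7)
My plan is to handle each of the four parts by directly substituting the identities from Lemma~\ref{lem:spunif} into the definition of $\kappa_u^k$ in \eqref{eq:PDupperkeyterm}, and then simplifying using the skew-symmetry $\vS^\ast=-\vS$ and, where needed, the descent lemma for $\vg$ or the Lipschitz hypothesis on $\nabla \vg + \vS$.

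For Part~\ref{prop:upperkeyterms:part:PPA} (PPA), I substitute $\vx_\vg^k=\vz^k$, $\vx_\vS^k=\vx_\vf^k$, and $\tnabla\vf(\vx_\vf^k)=(1/\gamma_k)U_k(\vz^k-\vx_\vf^k)-\vS\vx_\vf^k$ into $\kappa_u^k$. The update $\vz^{k+1}-\vz^k=\lambda_k(\vx_\vf^k-\vz^k)$ gives $\vx_\vf^k-\vz^{k+1}=((1-\lambda_k)/\lambda_k)(\vz^{k+1}-\vz^k)$, which collapses the first inner product into a multiple of $\|\vz^{k+1}-\vz^k\|_{U_k}^2$. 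All residual $\vS$-inner-products vanish after writing $\vz^{k+1}=(1-\lambda_k)\vz^k+\lambda_k\vx_\vf^k$ and using $\dotp{\vx_\vf^k,\vS\vx_\vf^k}=0$; summing the resulting scalars yields exactly $(1-2/\lambda_k)\|\vz^{k+1}-\vz^k\|_{U_k}^2$. Part~\ref{prop:upperkeyterms:part:PDFBS} (FBS) is the same computation augmented by $\nabla\vg(\vz^k)$ terms coming from the forward step. After the skew-symmetric cancellation, the gradient terms aggregate into $-2\gamma_k\dotp{\vz^{k+1}-\vz^k,\nabla\vg(\vz^k)}$, which I bound using the descent lemma (Baillon--Haddad gives that $\nabla \vg$ is $(1/\beta)$-Lipschitz, hence $\vg(\vx_\vf^k)\le\vg(\vz^k)+\dotp{\nabla\vg(\vz^k),\vx_\vf^k-\vz^k}+(1/(2\beta))\|\vx_\vf^k-\vz^k\|^2$). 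Rescaling via $\vx_\vf^k-\vz^k=(1/\lambda_k)(\vz^{k+1}-\vz^k)$ produces the $\vg$-difference and a $(\gamma_k/(\beta\lambda_k))\|\vz^{k+1}-\vz^k\|^2$ slack; combining with the $(1-2/\lambda_k)\|\vz^{k+1}-\vz^k\|_{U_k}^2$ contribution and using $\|\cdot\|_{U_k}^2\ge\rho\|\cdot\|^2$ (valid since $1-2/\lambda_k<0$ under the stepsize rule, because $1/\alpha_k<2$) together with $\gamma_k\le 2\beta\rho-\varepsilon$ yields the announced coefficient $\rho-\varepsilon/(\beta\lambda_k)$.

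Part~\ref{prop:upperkeyterms:part:PDPRS} (PRS) is the most algebraically involved, so I introduce the abbreviations $\vd:=\vx_\vf^k-\vx_\vg^k$ and $\vu:=\vx_\vg^k-\vz^k$, noting that $\vz^{k+1}-\vz^k=\lambda_k\vd$. With these, $\vx_\vg^k-\vz^{k+1}=\vu-\lambda_k\vd$, $\vx_\vf^k-\vz^{k+1}=\vu+(1-\lambda_k)\vd$, and $2\vx_\vg^k-\vz^k-\vx_\vf^k=\vu-\vd$. Substituting the $\tnabla\vf,\tnabla\vg$ formulas from Lemma~\ref{lem:spunif}~\ref{lem:spunif:part:PDPRS}, the inner products split into an $U$-piece and an $\vS$-piece; all $\vS$-contributions cancel once I add the mixed term $\dotp{-\vz^{k+1},\vS(w\vx_\vf^k+(1-w)\vx_\vg^k)}$, using $\dotp{\vx_\vg^k,\vS\vx_\vg^k}=\dotp{\vx_\vf^k,\vS\vx_\vf^k}=0$. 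Expanding the remaining $U$-inner products, the $\|\vu\|_U^2$ and $\dotp{\vu,\vd}_U$ terms cancel pairwise, leaving $-2\lambda_k(1-\lambda_k)\|\vd\|_U^2$, which together with the leading $-\lambda_k^2\|\vd\|_U^2$ gives $\lambda_k(\lambda_k-2)\|\vd\|_U^2=(1-2/\lambda_k)\|\vz^{k+1}-\vz^k\|_U^2$.

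Part~\ref{prop:upperkeyterms:part:PDFBF} (FBF) is where I expect the main obstacle, because the cancellation is less obvious. Using $\lambda_k=1$, $\vx_\vg^k=\vx_\vS^k=\vx_\vf^k$, and $\tnabla\vf(\vx_\vf^k)=(1/\gamma_k)U_k(\vy^k-\vx_\vf^k)$, I set $\vD:=\vx_\vf^k-\vz^k$ and $\vR:=\nabla\vg(\vx_\vf^k)-\nabla\vg(\vz^k)+\vS\vD$, so that $\vz^{k+1}-\vz^k=\vD-\gamma_k U_k^{-1}\vR$ and $\vx_\vf^k-\vz^{k+1}=\gamma_k U_k^{-1}\vR$. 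I rewrite $\tnabla\vf(\vx_\vf^k)+\nabla\vg(\vx_\vf^k)=-(1/\gamma_k)U_k\vD+\vR-\vS\vx_\vf^k$, after which the $\vS\vx_\vf^k$ contribution cancels against the mixed term (again using $\dotp{\vx_\vf^k,\vS\vx_\vf^k}=0$). The self-adjointness of $U_k^{-1}$ reduces the whole expression to
\begin{equation*}
\kappa_u^k = -\|\vD\|_{U_k}^2+\gamma_k^2\|\vR\|_{U_k^{-1}}^2.
\end{equation*}
To conclude $\kappa_u^k\le 0$, I use Lemma~\ref{lem:metricproperties} to bound $\|\vR\|_{U_k^{-1}}^2\le(1/\rho)\|\vR\|^2$, the triangle inequality together with the $(1/\beta)$-Lipschitz property of $\nabla\vg$ and the operator norm of $\vS$ to get $\|\vR\|\le(\beta^{-1}+\|\vS\|)\|\vD\|$, and the lower bound $\|\vD\|_{U_k}^2\ge\rho\|\vD\|^2$; the stepsize condition $\gamma_k<\rho/(\beta^{-1}+\|\vS\|)$ then forces the entire expression to be nonpositive.
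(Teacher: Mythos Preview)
Your proof is correct. The main organizational difference from the paper is in Parts~1--3: rather than treating each of PPA, FBS, PRS by direct substitution, the paper first derives a single intermediate identity
\[
\kappa_u \;=\; 2\dotp{\vx_\vf + \gamma U^{-1}\bigl(\tnabla \vg(\vx_\vg) + (1-w)\vS\vx_\vg\bigr) - \vz^{+},\,\vz - \vz^{+}}_{U} \;-\; \|\vz^{+} - \vz\|_{U}^2,
\]
valid for all three algorithms simultaneously (using the common update $\vz^{+}-\vz=\lambda(\vx_\vf-\vx_\vg)$ and $\vx_\vS=w\vx_\vf+(1-w)\vx_\vg$), and only then specializes. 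This handles the skew-symmetry cancellation once instead of three times, and makes the PRS case a two-line consequence of the resolvent identity $\vx_\vf+\gamma U^{-1}(\tnabla\vg(\vx_\vg)+(1-w)\vS\vx_\vg)-\vz^{+}=(1-1/\lambda)(\vz-\vz^{+})$, avoiding your $(\vd,\vu)$ bookkeeping. Your direct approach is equally valid and arguably more transparent per case, at the cost of repeating the $\vS$-cancellation.

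For Part~4 (FBF), the two arguments are in fact the same computation packaged differently: the paper applies the cosine rule~\eqref{eq:cosinerule} to obtain $\kappa_u=\|\vx_\vf-\vz^{+}\|_{U}^2-\|\vx_\vf-\vz\|_{U}^2$, which is exactly your $-\|\vD\|_{U_k}^2+\gamma_k^2\|\vR\|_{U_k^{-1}}^2$ once one notes $\vx_\vf-\vz^{+}=\gamma_kU_k^{-1}\vR$. The final Lipschitz bound is identical.
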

{\em Proof.}
Fix $k \in \vN$. To simplify notation, we drop the iteration index and denote $\vz := \vz^k, \vx_\vf := \vx_\vf^k, \vx_\vg := \vx_\vg^k, \vx_\vS := \vx_\vS^k, \vz^+ := \vz^{k+1}, \gamma := \gamma_k, \lambda := \lambda_k, U := U_k, $ and $\kappa_u := \kappa_u^k(\lambda_k)$ throughout this proof.

For PPA, FBS, and PRS, we note that the following identities hold: 
\begin{align}\label{defi:PDFBTR:eq:main}
\vz^{+} - \vz &= \lambda(\vx_\vf - \vx_\vg),
\end{align}
and there exists $w \in \vR$ such that
\begin{align}\label{defi:PDFBTR:eq:convex}
\vx_{\vS} &= w \vx_\vf +  (1-w) \vx_\vg.
\end{align}
Indeed, in PPA and FBS, $w = 1$ (see Section~\ref{sec:examplesofUS}).  In PRS, $w$ is a parameter of the algorithm, and Equations~\eqref{defi:PDFBTR:eq:convex} and~\eqref{defi:PDFBTR:eq:main} are shown in Lemma~\ref{lem:spunif}. Furthermore, Part~\ref{prop:basicprox:part:J} of Proposition~\ref{prop:basicprox} shows that in PPA and FBS,
\begin{align}\label{eq:FPRsubgradidentity}
\vx_\vf &= \vx_\vg - \gamma U^{-1} \left( \tnabla \vf(\vx_\vf) + \nabla \vg(\vx_\vg) + \vS\vx_{\vS}\right)
\end{align}
for a unique subgradient $\tnabla \vf(\vx_\vf) \in \partial \vf(\vx_\vf)$; see Lemma~\ref{lem:spunif} for the definition of $\tnabla \vf(\vx_\vf)$. 

Now we claim that in PPA, FBS, and PRS,
\begin{align*}
\kappa_u &= 2\dotp{\vx_\vf + \gamma U^{-1}(\tnabla \vg(\vx_{\vg}) + (1-w) \vS\vx_\vg) - \vz^{+}, \vz - \vz^{+}}_{U} - \|\vz^{+} - \vz\|_{U}^2\numberthis \label{lem:PDFBTRupperkeyterm:eq:main}
\end{align*}
where we make the identification $\tnabla \vg(\vx_\vg) = \nabla \vg(\vx_\vg)$ whenever $\vg$ is differentiable; see Lemma~\ref{lem:spunif} for the definition of $\tnabla \vg(\vx_\vg)$ in the PRS algorithm.  Because $\vx_\vS = \vx_\vg + w(\vx_\vf - \vx_\vg) = \vx_\vg + (w/\lambda)(\vz^{+} - \vz)$ and $\dotp{\vS\vx, \vx} = 0$ for all $\vx \in \vH$, we have the simplification:
\begin{align}\label{eq:xstoxg}
2\dotp{\vz - \vz^{+}, \gamma (1-w)\vS \vx_{\vS}} = 2\dotp{\vz - \vz^{+}, \gamma (1-w)\vS \vx_{\vg}}.
\end{align}
Therefore, 
\begin{align*}
\kappa_u &= - \|\vz^{+} - \vz\|_{U}^2 + 2\gamma \lambda \dotp{\vx_{\vf} - \vz^{+}, \tnabla \vf(\vx_{\vf})} \\
&+ 2\gamma \lambda  \dotp{\vx_{\vg} - \vz^{+}, \tnabla \vg(\vx_{\vg})} + 2\gamma \lambda   \dotp{\vx_{\vS}-\vz^{+}, \vS\vx_{\vS}} \\
&= - \|\vz^{+} - \vz\|_{U}^2 + 2\gamma\lambda \dotp{\vx_{\vf} - \vz^{+}, \tnabla \vf(\vx_{\vf})} \\
&+2\gamma\lambda \dotp{\vx_{\vg} -\vx_\vf, \tnabla \vg(\vx_{\vg})} + 2\gamma\lambda  \dotp{\vx_{\vf} - \vz^{+}, \tnabla \vg(\vx_{\vg})}\\
& +2\gamma\lambda \dotp{\vx_{\vS} -\vx_\vf, \vS \vx_{\vS}} + 2\gamma\lambda  \dotp{\vx_{\vf} - \vz^{+}, \vS\vx_{\vS}} \\
&= - \|\vz^{+} - \vz\|_{U}^2 + 2\gamma\lambda \dotp{\vx_{\vf} - \vz^{+},  \tnabla \vf(\vx_\vf) + \tnabla \vg(\vx_\vg) + \vS\vx_{\vS}} \\
&\stackrel{\eqref{defi:PDFBTR:eq:main}}{+}2\dotp{\vz - \vz^{+}, \gamma\tnabla \vg(\vx_{\vg})}  + 2\dotp{\vz - \vz^{+}, \gamma (1-w)\vS \vx_{\vS}} \\
&\stackrel{\eqref{defi:PDFBTR:eq:main}}{=}  -  \|\vz^{+} - \vz\|_{U}^2 + 2  \dotp{\vx_{\vf} - \vz^{+},  \vz - \vz^{+}}_{U}\\
&\stackrel{\eqref{eq:xstoxg}}{+}2\dotp{\vz - \vz^{+}, \gamma\tnabla \vg(\vx_{\vg}) +  \gamma (1-w)\vS \vx_{\vg}}  \\
&= 2\dotp{\vx_\vf + \gamma U^{-1}(\tnabla \vg(\vx_{\vg}) + (1-w) \vS\vx_\vg) - \vz^{+}, \vz - \vz^{+}}_{U} - \|\vz^{+} - \vz\|_{U}^2
\end{align*}
where the second to last equality uses Equation~\eqref{eq:FPRsubgradidentity} and the second to last ``$+$" also uses Equation~\eqref{defi:PDFBTR:eq:convex}.

Now we proceed with the specific cases:  In PPA and FBS, $w = 1$ and
\begin{align*}
\kappa_u &\stackrel{\eqref{lem:PDFBTRupperkeyterm:eq:main}}{=} 2\dotp{\vx_\vf + \gamma U^{-1}\nabla \vg(\vx_{\vg}) - \vz^{+}, \vz - \vz^{+}}_{U} - \|\vz^{+} - \vz\|_{U}^2 \\
&= 2\dotp{\vx_\vf - \vz^{+}, \vz - \vz^{+}}_{U} + 2\gamma\dotp{\nabla\vg(\vx_{\vg}), \vz - \vz^{+}} -  \|\vz^{+} - \vz\|_{U}^2\\
&=  2\left(1- \frac{1}{\lambda}\right) \|\vz^{+} - \vz\|_{U}^2 + 2\gamma \lambda\dotp{\nabla \vg(\vx_{\vg}), \vx_\vg - \vx_\vf} -  \|\vz^{+} - \vz\|_{U}^2 \numberthis\label{eq:ppaupperstopshort}\\
&\leq \left(1- \frac{2}{\lambda}\right) \|\vz^{+} - \vz\|_{U}^2 + 2\gamma\lambda\vg(\vx_\vg) -2\gamma\lambda\vg(\vx_\vf) + \frac{\gamma}{\lambda\beta} \|\vz^{+} - \vz\|^2
\end{align*}
where use the identity $\vx_\vf - \vz^+ = \left(1-(1/\lambda)\right)(\vz - \vz^+)$ on the third line, we use the identity $\vz^+ - \vz = \lambda(\vx_\vf - \vx_\vg)$ (Equation~\eqref{defi:PDFBTR:eq:main}) on the last two lines, and the last inequality follows from the Descent Theorem~\cite[Theorem 18.15(iii)]{bauschke2011convex}: $\dotp{\nabla \vg(\vx_\vg), \vx_\vg - \vx_\vf} \leq \vg(\vx_\vg) - \vg(\vx_\vf) + (1/(2\beta))\|\vx_\vg - \vx_\vf\|^2.$  In PPA $\vg \equiv 0$, so the Equation~\eqref{eq:ppaupperstopshort} implies the identity in Part~\ref{prop:upperkeyterms:part:PPA}. The inequality for FBS now follows by the above bound for $\kappa_u$, the bound $\gamma \leq 2\beta\rho - \varepsilon$, and 
\begin{align*}
\left(1- \frac{2}{\lambda}\right) \|\vz^{+} - \vz\|_{U}^2 + \frac{\gamma}{\lambda\beta} \|\vz^{+} - \vz\|^2 &\leq \left(\rho +  \frac{\gamma - 2\beta\rho}{\lambda\beta}\right)\|\vz^{+} - \vz\|^2
\end{align*}
where we use $\lambda \leq (4\beta\rho - \gamma)/2\beta\rho \leq 2$ and the lower bound $U \succcurlyeq \rho I_{\vH}$.

For relaxed PRS, we have
\begin{align*}
\vz^{+}  = \vz + \lambda\left(\vx_\vf - \vx_\vg\right) &= (1-\lambda) \vz + \lambda\left(\vx_\vf - \vx_\vg + \vz\right) \\
&= (1-\lambda) \vz + \lambda\left(\vx_\vf + \gamma U^{-1}\left(\tnabla \vg(\vx_\vg) + (1-w)\vS\vx_\vg\right)\right).
\end{align*}
Therefore, subtract $\lambda \vz^+ + (1-\lambda)\vz$ from both sides of the above equation, divide by $\lambda$, and use the identity in Equation~\eqref{lem:PDFBTRupperkeyterm:eq:main} to get
\begin{align*}
\kappa_u &=  2\dotp{\vx_\vf + \gamma U^{-1}\left(\tnabla \vg(\vx_{\vg}) + (1-w) \vS\vx_\vg\right) - \vz^{+}, \vz - \vz^{+}}_{U} - \|\vz^{+} - \vz\|_{U}^2 \\
&= 2\left(1 - \frac{1}{\lambda}\right) \|\vz^{+} - \vz\|^2_U- \|\vz^{+} - \vz\|_{U}^2 =\left(1 - \frac{2}{\lambda}\right) \|\vz^{+} - \vz\|^2_U.
\end{align*}

Finally, we prove the bound for the FBF algorithm: 
\begin{align*}
\kappa_u &\stackrel{\eqref{eq:PDupperkeyterm}}{=}  2\dotp{\vx_\vf - \vz^{+}, \gamma\tnabla \vf(\vx_\vf) +\gamma \nabla \vg(\vx_\vf) + \gamma \vS\vx_\vf} - \|\vz^{+} - \vz\|_{U}^2 \\
&= 2\dotp{\vx_\vf - \vz^{+}, \vz - \vz^{+}}_{U} - \|\vz^{+} - \vz\|_{U}^2  \stackrel{\eqref{eq:cosinerule}}{=} \|\vx_\vf - \vz^{+}\|_{U}^2 - \|\vx_\vf - \vz\|_{U}^2.
\end{align*}
Furthermore, the identity holds:
\begin{align*}
\vz^{+} - \vx_\vf &= \vz^{+} - \vz + \vz - \vx_\vf \\
&= -\gamma U^{-1} \left( \tnabla \vf(\vx_\vf) + \nabla \vg(\vx_\vf) + \vS\vx_{\vf}\right) + \gamma U^{-1} \left( \tnabla \vf(\vx_\vf) + \nabla \vg(\vz) + \vS\vz\right) \\
&= \gamma U^{-1}\left(\nabla \vg(\vz) + \vS\vz - \nabla \vg(\vx_\vf) -\vS\vx_{\vf} \right). \numberthis \label{eq:fbflipidentity}
\end{align*}
Note that the operator $\nabla \vg + \vS$ is $(1/\beta) + \|\vS\|$ Lipschitz. Thus, 
\begin{align*}
\|\vx_\vf - \vz^{+}\|^2_{U} - \|\vx_\vf - \vz\|^2_{U} &\stackrel{\eqref{eq:fbflipidentity}}{=} \gamma^2\|\nabla \vg(\vz) + \vS\vz - \nabla \vg(\vx_\vf) - \vS\vx_\vf\|^2_{U^{-1}} - \|\vx_\vf - \vz\|^2_{U}  \\
&\leq  \left(\frac{\gamma^2}{\rho}\left(\frac{1}{\beta} + \|\vS\|\right)^2 - \rho\right)\|\vx_\vf - \vz\|^2 \leq 0,
\end{align*}
where we use the following bound: for all $\vx \in \vH$, $\|\vx\|_{U^{-1}}^2 \leq (1/\rho)\|\vx\|^2$ (Lemma~\ref{lem:metricproperties}).\endproof

%%End Section 2
%%%%%%%%%%%

%%%%%%%%%%%
%%Begin Section 3
\section{Ergodic convergence}\label{sec:ergodic}

In this section, we prove an ergodic convergence rate for the pre-primal-dual gap. To this end, we recall the partial sum sequence $\Sigma_k  = \sum_{i=0}^k \gamma_i\lambda_i,$ and for every sequence of vectors $(\vx^j)_{j\geq 0} \subseteq \vH$, we define the ergodic sequence $\overline{\vx}^k = ({1}/{\Sigma_k}) \sum_{i=0}^k \gamma_i\lambda_i \vx^i$. For each algorithm, Theorem~\ref{thm:PDerg} (below) gives an ergodic sequence $(\overline{\vx}^j)_{j \in \vN}$ such that for all bounded subsets $D \subseteq \vH$, we have
\begin{align*}
\sup_{\vx \in D} \cG^{\mathrm{pre}}(\overline{\vx}^k, \overline{\vx}^k, \overline{\vx}^k; \vx) &= O\left(\frac{1+\sup_{\vx \in D} \|\vx\|^2}{\Sigma_k}\right).
\end{align*}
This bound is a generalization of the primal-dual gap bounds shown in~\cite{chambolle2011first,boct2013convergence,bo2014convergence,boct2014convergence}. See Section~\ref{sec:preprimaldualprimal} for several lower bounds of the pre-primal-dual gap.

Before we prove our ergodic rates, we need to prove a bound for PRS. Recall that we only analyze the PRS algorithm when the map $U_k \equiv U$ is fixed.  The following lemma will help us deduce the convergence rate of the PRS algorithm whenever $\vf$ or $\vg$ is Lipschitz (Part~\ref{thm:PDerg:part:PRS} of Theorem~\ref{thm:PDerg}).

\begin{lemma}\label{lemma:PDFBTR:sequencedistancePRS}
Suppose that $(\vz^j)_{j \in \vN}$ is generated by the relaxed PRS algorithm and that $\vz^\ast$ is a fixed-point of $\TPRS$ (see equation~\eqref{eq:TPRS}). Then the following ergodic bound holds: for all $k \in \vN$, we have
\begin{align}\label{eq:ergodicfprPRS}
\|\overline{\vx}_\vf^k - \overline{\vx}_\vg^k\|_U &\leq \frac{2\gamma\|\vz^0 -\vz^\ast\|_U}{\Sigma_k}.
\end{align} 
\end{lemma}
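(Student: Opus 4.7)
The plan is to use the telescoping structure in relaxed PRS together with Féjer monotonicity of the iterates toward a fixed-point of $\TPRS$. The key observation, drawn from Lemma~\ref{lem:spunif}, Part~\ref{lem:spunif:part:PDPRS}, is the identity
\[
\vz^{k+1} - \vz^k = \lambda_k\bigl(\vx_\vf^k - \vx_\vg^k\bigr),
\]
which holds at every iteration. Since the PRS algorithm uses a fixed implicit stepsize $\gamma_k \equiv \gamma$, this gives $\gamma_k\lambda_k(\vx_\vf^k - \vx_\vg^k) = \gamma(\vz^{k+1} - \vz^k)$, so the weighted sum defining the ergodic iterate telescopes.

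Concretely, I would compute
\[
\Sigma_k\bigl(\overline{\vx}_\vf^k - \overline{\vx}_\vg^k\bigr) = \sum_{i=0}^k \gamma_i\lambda_i\bigl(\vx_\vf^i - \vx_\vg^i\bigr) = \gamma\sum_{i=0}^k\bigl(\vz^{i+1}-\vz^i\bigr) = \gamma\bigl(\vz^{k+1} - \vz^0\bigr).
\]
Taking $\|\cdot\|_U$ on both sides, dividing by $\Sigma_k$, and applying the triangle inequality around the fixed-point $\vz^\ast$ yields
\[
\|\overline{\vx}_\vf^k - \overline{\vx}_\vg^k\|_U \leq \frac{\gamma}{\Sigma_k}\bigl(\|\vz^{k+1}-\vz^\ast\|_U + \|\vz^0 - \vz^\ast\|_U\bigr).
\]
Finally, I would invoke Part~\ref{prop:bddsum:part:PDPRS} of Proposition~\ref{prop:bddsum}, which gives the Fejér-type estimate $\|\vz^{k+1} - \vz^\ast\|_U \leq \|\vz^0 - \vz^\ast\|_U$, to conclude the desired bound
\[
\|\overline{\vx}_\vf^k - \overline{\vx}_\vg^k\|_U \leq \frac{2\gamma\|\vz^0 - \vz^\ast\|_U}{\Sigma_k}.
\]

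There is no real obstacle here: the proof reduces to the telescoping identity plus standard Fejér monotonicity. The only thing that matters structurally is that the PRS algorithm keeps $\gamma$ and $U$ fixed across iterations, which is exactly why the fixed-point set stays constant and the above telescoping works cleanly.
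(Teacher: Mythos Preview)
Your proof is correct and follows essentially the same approach as the paper: telescope the identity $\lambda_k(\vx_\vf^k - \vx_\vg^k) = \vz^{k+1} - \vz^k$ using the fixed stepsize $\gamma$, apply the triangle inequality around $\vz^\ast$, and invoke the F\'ejer monotonicity from Proposition~\ref{prop:bddsum}. There is nothing to add.
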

{\em Proof.}
Fix $k \in \vN$. The identity $\lambda_k(\vx_\vf^k - \vx_{\vg}^k) = \vz^{k+1} - \vz^k$ and the fact the sequence $(\|\vz^j - \vz^\ast\|_U)_{j \in \vN}$ is decreasing (Part~\ref{prop:bddsum:part:PDPRS} of Proposition~\ref{prop:bddsum}), show that
\begin{align*}
\left\|\overline{\vx}_\vf^k - \overline{\vx}_\vg^k\right\|_U = \left\|\frac{\gamma}{\Sigma_k}\sum_{i = 0}^k\lambda_i(\vx_\vf^i - \vx_\vg^i)\right\|_U = \frac{\gamma\| \vz^{k+1} - \vz^0\|_U}{\Sigma_k} &\leq \frac{\gamma \|\vz^{k+1} - \vz^\ast\|_U + \gamma \|\vz^0 - \vz^\ast\|_U}{\Sigma_k} \\
&\leq \frac{2\gamma\|\vz^0 - \vz^\ast\|_U}{\Sigma_k}. \qquad \endproof
\end{align*}

Lemma~\ref{lemma:PDFBTR:sequencedistancePRS} shows that the difference of splitting variables $\overline{\vx}_{\vf}^k - \overline{\vx}_\vg^k$ converges to zero with rate $O({1}/{\Sigma_k})$. Thus, if $\vf$ is Lipschitz continuous, then $|\vf(\overline{\vx}_\vf^k) - \vf(\overline{\vx}_\vg^k)| = O(1/\Sigma_k)$.

We are now ready to prove our main ergodic convergence results.

\begin{theorem}[Ergodic convergence of the unifying scheme]\label{thm:PDerg}
Suppose that the sequence $(\vz^{j})_{j \in \vN}$ is generated by Algorithm~\ref{alg:US}, and suppose that Assumption~\ref{assump:variablemetric} holds. Then for all $\vx \in \vH$ and all $k \in \vN$, we have the following bounds:
\begin{remunerate}
\item \label{thm:PDerg:part:PPA} \textbf{Ergodic convergence of PPA:} Let $\vz^\ast \in \zer(\partial \vf + \vS)$.  Then in Algorithm~\ref{alg:PPA}, we have
\begin{align*}
\cG^{\mathrm{pre}}(\overline{\vx}_{\vf}^k,\overline{\vx}_{\vf}^k, \overline{\vx}_\vf^k; \vx) &\leq \frac{ \|\vz^0 - \vx\|^2_{U_0} + 2\eta_{\mathrm{p}}\eta_{\mathrm{s}}\|\vz^0 - \vz^\ast\|_{U_0}^2 + 2\mu \eta_{\mathrm{s}} \|\vz^\ast - \vx\|^2}{2\Sigma_k}.
\end{align*}
\item \label{thm:PDerg:part:FBS} \textbf{Ergodic convergence of FBS:} Let $\vz^\ast \in \zer(\partial \vf + \nabla \vg + \vS)$, and let $\overline{\lambda} = \sup_{j \in \vN} \lambda_j$. Then in Algorithm~\ref{alg:PDFBS}, we have the bounds $0 < \inf_{j \in \vN} \lambda_j \leq \overline{\lambda} \leq 2$ and $\inf_{j \in \vN} (1-\alpha_j\lambda_j)/(\alpha_j\lambda_j) > 0$,  and 
\begin{align*}
&\cG^{\mathrm{pre}}(\overline{\vx}_{\vf}^k, \overline{\vx}_{\vf}^k, \overline{\vx}_\vf^k; \vx) \\
&\leq \frac{\left(\|\vz^0 - \vx\|^2_{U_0} + \left(2\eta_{\mathrm{p}}\eta_{\mathrm{s}} + \frac{\left(1 + \eta_{\mathrm{p}}\eta_{\mathrm{s}}\right)\max\left\{\rho- \varepsilon/(\beta\overline{\lambda}), 0\right\}}{\rho\inf_{j \in \vN} (1-\alpha_j\lambda_j)/(\alpha_j\lambda_j)}\right)\|\vz^0 - \vz^\ast\|_{U_0}^2 + 2\mu \eta_{\mathrm{s}} \|\vz^\ast - \vx\|^2\right)}{2\Sigma_k}.
\end{align*}
\item \label{thm:PDerg:part:PRS} \textbf{Ergodic convergence of PRS:} Let $\vz^\ast$ be a fixed point of $\TPRS$. Suppose that $\vf$ (respectively $\vg$) is $L$-Lipschitz, let $\vx^k := \vx_\vg^k$ (respectively $\vx^k := \vx_\vf^k$), and let $\hat{w} = w$ (respectively $\hat{w} =1-w$). Then in Algorithm~\ref{alg:PDPRS}, we have
\begin{align*}
\cG^{\mathrm{pre}}(\overline{\vx}^k,\overline{\vx}^k, \overline{\vx}^k; \vx) &\leq \frac{\|\vz^0 - \vx\|_U^2 + 4(\gamma/\sqrt{\rho}) (L + |\hat{w}|\|\vS\|\|\vx\|)\|\vz^0 - \vz^\ast\|_U }{2\Sigma_k}. 
\end{align*}
\item \label{thm:PDerg:part:FBF} \textbf{Ergodic convergence of FBF:} Let $\vz^\ast \in \zer(\partial \vf + \nabla \vg + \vS)$. Then in Algorithm~\ref{alg:PDFBF}, we have
\begin{align*}
\cG^{\mathrm{pre}}(\overline{\vx}_{\vf}^k,\overline{\vx}_\vf^k, \overline{\vx}_\vf^k; \vx) &\leq  \frac{\|\vz^0 - \vx\|^2_{U_0} + 2\eta_{\mathrm{p}}\eta_{\mathrm{s}}\|\vz^0 - \vz^\ast\|_{U_0}^2 + 2\mu \eta_{\mathrm{s}} \|\vz^\ast - \vx\|^2}{2\Sigma_k}.
\end{align*}
\end{remunerate}
\end{theorem}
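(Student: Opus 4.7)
The proof rests on summing the fundamental inequality of Proposition~\ref{prop:PDupper} from $i=0$ to $k$ and applying Jensen's inequality to move the averages inside $\cG^{\mathrm{pre}}$. Jensen's applies because $\cG^{\mathrm{pre}}(\cdot,\cdot,\cdot;\vx)$ is convex in its first two arguments (by convexity of $\vf,\vg$) and linear in the third (since $\vS$ is linear). Three kinds of terms must then be controlled: the telescoping of the quadratics $\|\vz^i-\vx\|_{U_i}^2-\|\vz^{i+1}-\vx\|_{U_i}^2$, which does not telescope cleanly when the metric varies; the upper key terms $\kappa_u^i$ from Proposition~\ref{prop:upperkeyterms}; and the discrepancy between the averages $(\overline{\vx}_\vf^k,\overline{\vx}_\vg^k,\overline{\vx}_\vS^k)$ produced by Jensen's and the desired ergodic iterate $\overline{\vx}^k$ appearing in the statement.

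For the first kind I would regroup the sum as
\begin{align*}
\|\vz^0-\vx\|_{U_0}^2 + \sum_{i=1}^k\bigl(\|\vz^i-\vx\|_{U_i}^2-\|\vz^i-\vx\|_{U_{i-1}}^2\bigr) - \|\vz^{k+1}-\vx\|_{U_k}^2,
\end{align*}
bound each summand on the right by $\eta_{i-1}\|\vz^i-\vx\|_{U_{i-1}}^2$ via $(1+\eta_{i-1})U_{i-1}\succcurlyeq U_i$, split through the triangle inequality against $\vz^\ast$, and use the Fej\'er estimate $\|\vz^i-\vz^\ast\|_{U_{i-1}}^2\leq\eta_{\mathrm{p}}\|\vz^0-\vz^\ast\|_{U_0}^2$ from Proposition~\ref{prop:bddsum} together with $\|\vz^\ast-\vx\|_{U_{i-1}}^2\leq\mu\|\vz^\ast-\vx\|^2$. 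Summing against $\sum\eta_i=\eta_{\mathrm{s}}$ produces exactly the $2\eta_{\mathrm{p}}\eta_{\mathrm{s}}\|\vz^0-\vz^\ast\|_{U_0}^2+2\mu\eta_{\mathrm{s}}\|\vz^\ast-\vx\|^2$ constants of parts (i), (ii), (iv); for PRS the metric is fixed, so this correction vanishes and only $\|\vz^0-\vx\|_U^2$ survives. For the second kind, Proposition~\ref{prop:upperkeyterms} yields $\kappa_u^i\leq 0$ for PPA, PRS, and FBF (using $\lambda_i\in(0,2]$ in PPA/PRS), and the third-kind discrepancy vanishes for PPA and FBF because $\vx_\vg^i$ is absorbed ($\vg\equiv 0$ in PPA) or equals $\vx_\vf^i=\vx_\vS^i$ (in FBF, by Lemma~\ref{lem:spunif}).

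In FBS the upper key term bound contributes two additional pieces. The algebraic piece $2\gamma_i\lambda_i(\vg(\vx_\vg^i)-\vg(\vx_\vf^i))$ combines with the $\vg(\vx_\vg^i)$ appearing in $\cG^{\mathrm{pre}}(\vx_\vf^i,\vx_\vg^i,\vx_\vS^i;\vx)$ to replace it by $\vg(\vx_\vf^i)$; since $\vx_\vS^i=\vx_\vf^i$ in FBS, Jensen's then yields the desired $\cG^{\mathrm{pre}}(\overline{\vx}_\vf^k,\overline{\vx}_\vf^k,\overline{\vx}_\vf^k;\vx)$. The residual $(\rho-\varepsilon/(\beta\lambda_i))\|\vz^{i+1}-\vz^i\|^2$ is discarded when $\rho\beta\overline{\lambda}\leq\varepsilon$; otherwise the summability estimate in Part~\ref{prop:bddsum:part:PDFBS:sum} of Proposition~\ref{prop:bddsum} produces the $(1+\eta_{\mathrm{p}}\eta_{\mathrm{s}})\max\{\rho-\varepsilon/(\beta\overline{\lambda}),0\}/(\rho\inf_i(1-\alpha_i\lambda_i)/(\alpha_i\lambda_i))$ constant, while the auxiliary bounds $\overline{\lambda}\leq 2$ and $\inf_i(1-\alpha_i\lambda_i)/(\alpha_i\lambda_i)>0$ follow from $\alpha_i\in[1/2,1)$ together with $\lambda_i\leq 1/\alpha_i-\delta$. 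For PRS the third-kind discrepancy is the essential step: after Jensen's we have $\cG^{\mathrm{pre}}(\overline{\vx}_\vf^k,\overline{\vx}_\vg^k,\overline{\vx}_\vS^k;\vx)$ with $\overline{\vx}_\vS^k=w\overline{\vx}_\vf^k+(1-w)\overline{\vx}_\vg^k$, and replacing the non-matching arguments by $\overline{\vx}^k$ costs at most $(L+|\hat w|\|\vS\|\|\vx\|)\|\overline{\vx}_\vf^k-\overline{\vx}_\vg^k\|$ by Lipschitz continuity of the chosen function and by skew-symmetry of $\vS$; Lemma~\ref{lemma:PDFBTR:sequencedistancePRS} together with the embedding $\|\cdot\|\leq\|\cdot\|_U/\sqrt{\rho}$ (Lemma~\ref{lem:metricproperties}) then bounds this by the exact correction appearing in part (iii). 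The main bookkeeping hurdle is the FBS case, where the variable-metric error and the $\|\vz^{i+1}-\vz^i\|^2$ residual must be tracked through two separate summability identities and closed up simultaneously with matching constants.
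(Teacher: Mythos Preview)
Your proposal is correct and follows essentially the same approach as the paper: sum the fundamental inequality, apply Jensen's inequality, control the variable-metric telescoping via Assumption~\ref{assump:variablemetric} and the Fej\'er bounds of Proposition~\ref{prop:bddsum}, invoke the $\kappa_u^i$ bounds of Proposition~\ref{prop:upperkeyterms}, and patch the argument mismatch in FBS and PRS exactly as you describe. The only cosmetic difference is that the paper regroups the telescoping as $\|\vz^{i+1}-\vx\|_{U_{i+1}}^2-\|\vz^{i+1}-\vx\|_{U_i}^2$ (so the Fej\'er estimate is applied at the matching index pair $(i+1,U_{i+1})$ directly from Proposition~\ref{prop:bddsum}), whereas your regrouping produces $\|\vz^i-\vz^\ast\|_{U_{i-1}}^2$, which requires the extra (but immediate) observation that the step $\vz^{i-1}\mapsto\vz^i$ is nonexpansive in $\|\cdot\|_{U_{i-1}}$ before invoking the Fej\'er bound; this is implicit in the proofs of Propositions~\ref{prop:variablemetricKM} and~\ref{prop:variablemetricFBF}, so it is not a gap.
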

{\em Proof.}
Fix $k \in \vN$. For any sequence of points $(\vz^j)_{j \in \vN}\subseteq \vH$ and any point $\vz^\ast\in \vH$ such that $\|\vz^{i+1} - \vz^\ast\|_{U_{i+1}}^2 \leq (1+\eta_i)\|\vz^i - \vz^\ast\|_{U_i}^2$ for all $i \in \vN$, we have $\|\vz^i - \vz^\ast\|^2_{U_{i}} \leq \left(\prod_{i=0}^\infty (1+\eta_i)\right)\|\vz^0 - \vz^\ast\|_{U_0}^2.$
Therefore, by the convexity of $\|\cdot \|_{U_{i}}^2$ for all $i \in \vN$, and by the inequality $-\|\vx\|_{U_{i}} \leq -(1/(1+\eta_i))\|\vx\|_{U_{i+1}}$  for all $\vx \in \vH$ and $i \in \vN$, we have 
\begin{align*}
&\sum_{i=0}^k\left( \|\vz^{i} - \vx\|_{U_i}^2 - \|\vz^{i+1} - \vx\|_{U_i}^2\right) \\
&\leq \|\vz^0 - \vx\|_{U_0}^2 + \sum_{i=0}^k \left(\|\vz^{i+1} - \vx\|^2_{U_{i+1}} - \|\vz^{i+1} - \vx\|^2_{U_{i}}\right)\\
&\leq \|\vz^0 - \vx\|^2_{U_0} + \sum_{i=0}^k \frac{\eta_i}{1+\eta_i}\|\vz^{i+1}  -\vx\|^2_{U_{i+1}} \\
&\leq \|\vz^0 - \vx\|^2_{U_0} + 2\sum_{i=0}^k \eta_i\left(\|\vz^{i+1}  -\vz^\ast\|^2_{U_{i+1}} + \|\vz^\ast - \vx\|^2_{U_{i+1}}\right) \\
&\leq  \|\vz^0 - \vx\|^2_{U_0} + \left( 2\left(\prod_{i=0}^\infty (1+\eta_i)\right)\sum_{i=0}^\infty \eta_i\right) \|\vz^0 - \vz^\ast\|^2_{U_0} + 2\mu\left(\sum_{i=0}^\infty \eta_i\right) \|\vz^\ast - \vx\|^2 \\
&=  \|\vz^0 - \vx\|^2_{U_0} + 2\eta_{\mathrm{p}}\eta_{\mathrm{s}}\|\vz^0 - \vz^\ast\|_{U_0}^2 + 2\mu \eta_{\mathrm{s}} \|\vz^\ast - \vx\|^2. \numberthis \label{thm:PDerg:eq:normsumbound}
\end{align*}
We will use Equation~\eqref{thm:PDerg:eq:normsumbound} to produce bounds for all of the variable metric methods.

Part~\ref{thm:PDerg:part:PPA}: This follows from the Jensen's inequality, Proposition~\ref{prop:upperkeyterms} ($\kappa_{u}^i =\left(1- {2}/{\lambda_i}\right) \|\vz^{i+1} - \vz^{i}\|_{U_i}^2  \leq 0$), and the fundamental inequality ($\cG^{\mathrm{pre}}$ does not depend on its second input):
\begin{align*}
\cG^{\mathrm{pre}}(\overline{\vx}_{\vf}^k,\overline{\vx}_{\vf}^k, \overline{\vx}_\vf^k; \vx) &\leq \frac{1}{\Sigma_k}\sum_{i=0}^k \gamma_i\lambda_i\cG^{\mathrm{pre}}({\vx}_{\vf}^i,\vx_{\vf}^i, {\vx}_\vf^i; \vx) \\
&\stackrel{\eqref{eq:PDupper}}{\leq} \frac{1}{2\Sigma_k}\sum_{i=0}^k\left(\kappa_u^i + \|\vz^{i} - \vx\|_{U_i}^2 - \|\vz^{i+1} - \vx\|_{U_i}^2\right) \\
&\stackrel{\eqref{thm:PDerg:eq:normsumbound}}{\leq} \frac{1}{2\Sigma_k}\left( \|\vz^0 - \vx\|^2_{U_0} + 2\eta_{\mathrm{p}}\eta_{\mathrm{s}}\|\vz^0 - \vz^\ast\|_{U_0}^2 + 2\mu \eta_{\mathrm{s}} \|\vz^\ast - \vx\|^2\right).
\end{align*}

Part~\ref{thm:PDerg:part:FBS}: We have the following bound from Proposition~\ref{prop:bddsum}:
\begin{align*}
 \sum_{i=0}^k \left(\rho- \frac{\varepsilon}{\beta\lambda_i}\right) \|\vz^{i+1} - \vz^{i}\|^2 &\leq  \frac{\max\left\{\rho- \varepsilon/(\beta\overline{\lambda}), 0\right\}}{\rho\inf_{j \in \vN} (1-\alpha_j\lambda_j)/(\alpha_j\lambda_j)}\left(1 + \eta_{\mathrm{p}}\eta_{\mathrm{s}}\right)\|\vz^0 - \vz^\ast\|_{U_0}^2. \numberthis \label{eq:FBSergodicFPR}
\end{align*}
Thus, the bound follows from Jensen's inequality, Proposition~\ref{prop:upperkeyterms} {($\kappa_{u}^i \leq \left(\rho- {\varepsilon}/{(\beta\lambda_i)}\right) \|\vz^{i+1} - \vz^{i}\|^2 + 2\gamma_i\lambda_i\vg(\vx_\vg^i) - 2\gamma_i\lambda_i\vg(\vx_\vf^i)$)}, and the fundamental inequality:
\begin{align*}
&\cG^{\mathrm{pre}}(\overline{\vx}_{\vf}^k, \overline{\vx}_{\vf}^k, \overline{\vx}_\vf^k; \vx) \leq \frac{1}{\Sigma_k}\sum_{i=0}^k \gamma_i \lambda_i\cG^{\mathrm{pre}}({\vx}_{\vf}^i, {\vx}_{\vf}^i, {\vx}_\vf^i; \vx)  \\
&= \frac{1}{\Sigma_k}\sum_{i=0}^k \left(\gamma_i\lambda_i\cG^{\mathrm{pre}}({\vx}_{\vf}^i, {\vx}_{\vg}^i, {\vx}_\vf^i;\vx) + \gamma_i\lambda_i\vg(\vx_\vf^i) - \gamma_i\lambda_i\vg(\vx_\vg^i)\right) \\
&\stackrel{\eqref{eq:PDupper}}{\leq} \frac{1}{2\Sigma_k} \sum_{i=0}^k\left( \kappa_u^i  + \|\vz^{i} - \vx\|_{U_i}^2 - \|\vz^{i+1} - \vx\|_{U_i}^2 + 2\gamma_i\lambda_i\vg(\vx_\vf^i) - 2\gamma_i\lambda_i\vg(\vx_\vg^i)\right) \\
&\stackrel{\eqref{thm:PDerg:eq:normsumbound}}{\leq} \frac{1}{2\Sigma_k}\left( \sum_{i=0}^k \left(\rho- \frac{\varepsilon}{\beta\lambda_i}\right) \|\vz^{i+1} - \vz^{i}\|^2\right) \\
&+  \frac{1}{2\Sigma_k}\left( \|\vz^0 - \vx\|^2_{U_0} + 2\eta_{\mathrm{p}}\eta_{\mathrm{s}}\|\vz^0 - \vz^\ast\|_{U_0}^2 + 2\mu \eta_{\mathrm{s}} \|\vz^\ast - \vx\|^2\right) \\
&\stackrel{\eqref{eq:FBSergodicFPR}}{\leq}  \frac{\left(\|\vz^0 - \vx\|^2_{U_0} + \left(2\eta_{\mathrm{p}}\eta_{\mathrm{s}} + \frac{\left(1 + \eta_{\mathrm{p}}\eta_{\mathrm{s}}\right)\max\left\{\rho- \varepsilon/(\beta\overline{\lambda}), 0\right\}}{\rho\inf_{j \in \vN} (1-\alpha_j\lambda_j)/(\alpha_j\lambda_j)}\right)\|\vz^0 - \vz^\ast\|_{U_0}^2 + 2\mu \eta_{\mathrm{s}} \|\vz^\ast - \vx\|^2\right)}{2\Sigma_k}.
\end{align*}

Part~\ref{thm:PDerg:part:PRS}: We prove the result when $\vf$ is Lipschitz; the other case is symmetric.  This follows from the Jensen's inequality, Proposition~\ref{prop:upperkeyterms}{ ($\kappa_{u}^i =\left(1- {2}/{\lambda_i}\right) \|\vz^{i+1} - \vz^{i}\|_{U}^2  \leq 0$)}, the fundamental inequality, and the identity $\overline{\vx}_\vg^k - \overline{\vx}_\vS^k = w(\overline{\vx}_\vg^k - \overline{\vx}_\vf^k)$ (follows by averaging identities found in Part~\ref{lem:spunif:part:PDPRS} of Lemma~\ref{lem:spunif}):
\begin{align*}
\cG^{\mathrm{pre}}(\overline{\vx}_{\vg}^k,\overline{\vx}_{\vg}^k, \overline{\vx}_{\vg}^k; \vx) &= \cG^{\mathrm{pre}}(\overline{\vx}_{\vf}^k,\overline{\vx}_{\vg}^k, \overline{\vx}_{\vS}^k; \vx) + \vf(\overline{\vx}_\vg^k) - \vf(\overline{\vx}_\vf^k) + \dotp{\vS(\overline{\vx}_{\vg}^k - \overline{\vx}_{\vS}^k), -\vx} \\
 &\leq \frac{1}{\Sigma_k}\sum_{i=0}^k \gamma\lambda_i\cG^{\mathrm{pre}}({\vx}_{\vf}^i,{\vx}_{\vg}^i, {\vx}_{\vS}^i;\vx) \\
 &+ \vf(\overline{\vx}_\vg^k) - \vf(\overline{\vx}_\vf^k) + \dotp{\vS(\overline{\vx}_{\vg}^k - \overline{\vx}_{\vS}^k), -\vx}  \\
&\stackrel{\eqref{eq:PDupper}}{\leq}   \frac{1}{2\Sigma_k}\sum_{i=0}^k\left(\kappa_u^i + \|\vz^{i} - \vx\|_{U}^2 - \|\vz^{i+1} - \vx\|_{U}^2\right) \\
&+ L\|\overline{\vx}_{\vg}^k - \overline{\vx}_\vf^k\| + \|\vS\|\|\overline{\vx}_{\vg}^k - \overline{\vx}_{\vS}^k\|\|\vx\| \\
&\stackrel{\eqref{eq:ergodicfprPRS}}{\leq} \frac{\|\vz^0 - \vx\|_U^2 + 4(\gamma/\sqrt{\rho}) (L + |w|\|\vS\|\|\vx\|)\|\vz^0 - \vz^\ast\|_U }{2\Sigma_k} .
\end{align*}

Part~\ref{thm:PDerg:part:FBF}: This follows from the Jensen's inequality, Proposition~\ref{prop:upperkeyterms} ($\kappa_{u}^i \leq 0$), and the fundamental inequality:
\begin{align*}
\cG^{\mathrm{pre}}(\overline{\vx}_{\vf}^k,\overline{\vx}_{\vf}^k, \overline{\vx}_\vf^k; \vx) &\leq \frac{1}{\Sigma_k}\sum_{i=0}^k \gamma_i\lambda_i\cG^{\mathrm{pre}}({\vx}_{\vf}^i,{\vx}_{\vf}^i, {\vx}_\vf^i; \vx) \\
&\stackrel{\eqref{eq:PDupper}}{\leq} \frac{1}{2\Sigma_k}\sum_{i=0}^k\left(\kappa_u^i + \|\vz^{i} - \vx\|_{U_i}^2 - \|\vz^{i+1} - \vx\|_{U_i}^2\right) \\
&\stackrel{\eqref{thm:PDerg:eq:normsumbound}}{\leq} \frac{1}{2\Sigma_k}\left(\|\vz^0 - \vx\|^2_{U_0} + 2\eta_{\mathrm{p}}\eta_{\mathrm{s}}\|\vz^0 - \vz^\ast\|_{U_0}^2 + 2\mu \eta_{\mathrm{s}} \|\vz^\ast - \vx\|^2\right). \qquad \endproof
\end{align*}

\begin{remark}
In general, the $O(1/(k+1))$ convergence rates in Theorem~\ref{thm:PDerg} are the best PPA, FBS, and PRS obtain for $(\overline{\vx}_\vf^j)_{j \in \vN}$ and $(\overline{\vx}_{\vg}^j)_{j \in \vN}$ \cite[Proposition 8]{davis2014convergence}.
\end{remark}
%%End Section 3
%%%%%%%%%%%

%%%%%%%%%%%
%%Begin Section 4
\section{Nonergodic convergence}\label{sec:nonergodic}

In this section we deduce nonergodic convergence rates for PPA, FBS and PRS under the following assumption:

\begin{assump}\label{assump:nonergodic}
For all nonergodic convergence results, we assume $(U_j)_{j \in \vN}$ and $(\gamma_j)_{j \in \vN}$ are constant sequences. 
\end{assump}

For PPA, FBS, and PRS, Theorem~\ref{thm:nonergodic} (below) produces a natural sequence $(\vx^j)_{j \in \vN}$ such that for all bounded subsets $D \subseteq \vH$, we have
\begin{align*}
\sup_{\vx \in D} \cG^{\mathrm{pre}}(\vx^k, \vx^k, \vx^k; \vx) &= o\left(\frac{1+ \sup_{\vx \in D} \|\vx\|_U}{\sqrt{k+1}}\right).
\end{align*}
To the best of our knowledge, the rate of convergence for the nonergodic primal-dual gap generated by the class of algorithms we study has never appeared in the literature.

Nonergodic iterates tend to share structural properties, such as sparsity or low rank, with the solution of the problem. In some cases, the ergodic iterates generated in Section~\ref{sec:ergodic} ``average out" structural properties of the nonergodic iterates. Thus, although the ergodic iterates may be ``closer" to the solution, they are often poorer partial solutions than the nonergodic iterates. The results of this section provide worst-case theoretical guarantees on the quality of the nonergodic iterates in order to justify their use in practical applications.  

In our analysis, we use the following result (see also~\cite{doi:10.1137/130940402} for similar little-$o$ and big-$O$ convergence rates): 

\begin{theorem}[{\cite[Theorem 1]{davis2014convergence}}]\label{thm:FPRrate}
Let $\alpha \in (0, 1)$, let $\rho > 0$, let $U \in \cS_\rho(\vH)$, and let $(\lambda_j)_{j \in \vN} \subseteq (0, 1/\alpha)$. Suppose that $T : \vH \rightarrow \vH$ is an $\alpha$-averaged operator in the norm $\|\cdot\|_U$. Let $\vz^\ast$ be a fixed point of $T$, let $\vz^0 \in \vH$, let $\tau_k := (1-\alpha\lambda_k)\lambda_k/\alpha$ for all $k \in \vN$, suppose that $\underline{\tau} := \inf_{j \in \vN} \tau_j > 0$, and suppose that $(\vz^j)_{j \in \vN}$ is generated by the following iteration: for all $k \in \vN$, let
\begin{align}\label{eq:kmunnormalized}
\vz^{k+1} := T_{\lambda_k}(\vz^k).
\end{align}
Then for all $k \in \vN$, we have 
\begin{align}\label{eq:FPRrates}
\|T\vz^k - \vz^k\|_U^2 \leq \frac{\|\vz^0 - \vz^\ast\|_U^2}{\underline{\tau}(k+1)} && \mathrm{and} && \|T\vz^k - \vz^k\|_U^2 = o\left(\frac{1}{k+1}\right).
\end{align}
\end{theorem}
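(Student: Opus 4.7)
The plan is to derive the result from two independent ingredients already packaged in Proposition~\ref{prop:basicprox}: (a) a telescoping Fej\'er-type inequality in the norm $\|\cdot\|_U$ that produces summability of the fixed-point residual (FPR) $\|T\vz^k - \vz^k\|_U^2$, weighted by $\tau_k$; and (b) monotonicity of the (unweighted) FPR, $\|T\vz^{k+1}-\vz^{k+1}\|_U \leq \|T\vz^k-\vz^k\|_U$.  Once both are in hand, the $O(1/(k+1))$ rate drops out by a standard averaging argument, and the $o(1/(k+1))$ refinement by the classical fact that a non-increasing, nonnegative summable sequence must decay faster than $1/(k+1)$.

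For ingredient (a), observe that by Part~\ref{prop:basicprox:part:wider} of Proposition~\ref{prop:basicprox}, the relaxed map $T_{\lambda_k} = (1-\lambda_k) I_{\vH} + \lambda_k T$ is $\lambda_k\alpha$-averaged in $\|\cdot\|_U$ (recall $\lambda_k \in (0,1/\alpha)$).  Applying the averaged-operator contraction inequality (Part~\ref{prop:basicprox:part:contract}) with $x = \vz^k$ and $y = \vz^\ast$, and noting that $T\vz^\ast = \vz^\ast$ implies $T_{\lambda_k}\vz^\ast = \vz^\ast$, gives
\begin{equation*}
\|\vz^{k+1}-\vz^\ast\|_U^2 \leq \|\vz^k-\vz^\ast\|_U^2 - \tfrac{1-\lambda_k\alpha}{\lambda_k\alpha}\,\|\vz^{k+1}-\vz^k\|_U^2.
\end{equation*}
Since $\vz^{k+1}-\vz^k = \lambda_k(T\vz^k-\vz^k)$, the last term equals $\tau_k\|T\vz^k - \vz^k\|_U^2$. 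Telescoping from $0$ to $k$ and dropping the nonnegative tail yields $\sum_{i=0}^k \tau_i \|T\vz^i - \vz^i\|_U^2 \leq \|\vz^0-\vz^\ast\|_U^2$.

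For ingredient (b), write $T = (1-\alpha)I_\vH + \alpha N$ with $N$ nonexpansive in $\|\cdot\|_U$, so that $T\vz^k-\vz^k = \alpha(N\vz^k-\vz^k)$ and $T_{\lambda_k} = (1-\lambda_k\alpha)I_\vH + \lambda_k\alpha N$.  The Krasnosel'ski\u{\i}--Mann trick then gives, via the triangle inequality followed by nonexpansiveness of $N$,
\begin{equation*}
\|N\vz^{k+1}-\vz^{k+1}\|_U \leq \|N\vz^{k+1}-N\vz^k\|_U + \|N\vz^k-\vz^{k+1}\|_U \leq \lambda_k\alpha\|N\vz^k-\vz^k\|_U + (1-\lambda_k\alpha)\|N\vz^k-\vz^k\|_U,
\end{equation*}
which collapses to $\|N\vz^{k+1}-\vz^{k+1}\|_U \leq \|N\vz^k-\vz^k\|_U$; multiplying through by $\alpha$ shows $\|T\vz^k-\vz^k\|_U$ is non-increasing.

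To conclude, monotonicity combined with summability gives $(k+1)\underline{\tau}\|T\vz^k-\vz^k\|_U^2 \leq \sum_{i=0}^k \tau_i \|T\vz^i-\vz^i\|_U^2 \leq \|\vz^0-\vz^\ast\|_U^2$, which is the first bound in~\eqref{eq:FPRrates}.  For the $o(1/(k+1))$ statement, apply the elementary lemma that a non-increasing nonnegative sequence $(a_k)$ with $\sum a_k < \infty$ satisfies $k a_k \to 0$ (split the sum as $\sum_{i=\lceil k/2 \rceil}^k a_i \geq \lceil k/2 \rceil a_k$ and use that the tail vanishes). I do not anticipate a serious obstacle; the only subtle point is ensuring the relaxation range $\lambda_k \in (0,1/\alpha)$ keeps $\lambda_k\alpha \in (0,1)$ so that Part~\ref{prop:basicprox:part:wider} and the convex-combination triangle inequality in step (b) both apply, and the hypothesis $\underline{\tau} > 0$ ensures both $\inf_k \lambda_k > 0$ and $\sup_k \lambda_k\alpha < 1$, which is exactly what is needed.
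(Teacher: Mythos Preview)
Your proof is correct. Note, however, that the paper does not actually prove this theorem: it is quoted verbatim from \cite[Theorem~1]{davis2014convergence} and invoked as a black box, so there is no ``paper's own proof'' to compare against here. That said, your argument is precisely the standard one used in that reference: establish the telescoping Fej\'er inequality via the $\lambda_k\alpha$-averagedness of $T_{\lambda_k}$, establish monotonicity of the fixed-point residual via the Krasnosel'ski\u{\i}--Mann computation with the underlying nonexpansive map $N$, and combine the two to extract the $O(1/(k+1))$ and $o(1/(k+1))$ rates. One small point worth making explicit in your write-up: to invoke the ``non-increasing summable $\Rightarrow o(1/k)$'' lemma you need $\sum_k \|T\vz^k-\vz^k\|_U^2 < \infty$, whereas ingredient~(a) gives only $\sum_k \tau_k\|T\vz^k-\vz^k\|_U^2 < \infty$; the hypothesis $\underline{\tau}>0$ bridges this gap, but it is worth saying so.
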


Throughout this section, $T$ will always denote an $\alpha$-averaged mapping in the norm $\|\cdot \|_U$.  Recall that for $\lambda \in (0, 1/\alpha)$, $T_{\lambda}$ is $\alpha\lambda$-averaged (see Proposition~\ref{prop:basicprox}), so 
\begin{align*}
\|T_{\lambda}\vz^k - \vz^\ast\|_U^2 &\stackrel{\eqref{eq:avgdecrease}}{\leq} \|\vz^k - \vz^\ast\|_U^2 - \frac{1- \alpha\lambda}{\alpha\lambda}\|T_{\lambda} \vz^k - \vz^k\|_U^2\numberthis\label{eq:fejer2}
\end{align*}
for all $k \in \vN$, and any fixed-point $\vz^\ast$ of $T$. Note that Equation~\eqref{eq:fejer2} also holds when $\alpha\lambda = 1$ (see Proposition~\ref{prop:basicprox}).  Equation~\eqref{eq:fejer2} shows that $T_\lambda \vz^k$ is at least as close to $\vz^\ast$ as $\vz^k$ is. This fact will be useful in the proof of Theorem~\ref{thm:nonergodic} below.

In the following theorem, we will deduce little-$o$ and big-$O$ convergence rates.  Because the pre-primal-dual gap can be negative, we slightly abuse notation: given a point $\vx \in \vH$, a (not necessarily positive) sequence $(a_j)_{j \in \vN}$ satisfies $a_k = o((1+\|\vx\|_U)/\sqrt{k+1})$ provided that there exists a nonnegative sequence $(b_j)_{j \in \vN}$ such that $b_k = o((1+\|\vx\|_U)/\sqrt{k+1})$ and $a_k = O(b_k)$. Note that we do not measure $|a_k|$ because our only goal is to ensure that the sequence $(a_j)_{j \in \vN}$ is eventually nonpositive.

\begin{theorem}\label{thm:nonergodic}
Suppose that Assumption~\ref{assump:nonergodic} holds, let $U \in \cS_\rho(\vH)$ denote the common metric inducing map, and let $\gamma \in \vR_{++}$ denote the common stepsize parameter. Then each method is a special case of Iteration~\eqref{eq:kmunnormalized}. For each method, assume that $\underline{\tau} > 0$ (See Theorem~\ref{thm:FPRrate}). Then for all $k \in \vN$ and all $\vx \in \vH$, the following hold:
\begin{remunerate}
\item\label{thm:nonergodic:part:PPA} \textbf{Nonergodic convergence of PPA:} Let $\vz^\ast \in \zer(\partial \vf  + \vS)$. Then in Algorithm~\ref{alg:PPA}, we have $\alpha = 1/2$ and $T = J_{U^{-1}(\partial \vf + \vS)} $, 
\begin{align*}
\cG^{\mathrm{pre}}({\vx}_{\vf}^k,{\vx}_{\vf}^k, {\vx}_\vf^k; \vx) &\leq \frac{\left(\|\vz^0 - \vz^\ast\|_U + \|\vz^\ast - \vx\|_U\right) \|\vz^0 - \vz^\ast\|_U}{\gamma\sqrt{\underline{\tau}(k+1)}},
\end{align*}
and $\cG^{\mathrm{pre}}({\vx}_{\vf}^k,\vx_\vf^k, {\vx}_\vf^k; \vx) = o\left((1+\|\vx\|_U)/\sqrt{k+1}\right)$.
\item \label{thm:nonergodic:part:FBS}\textbf{Nonergodic convergence of FBS:}  Let $\vz^\ast \in \zer(\partial \vf +  \nabla \vg + \vS)$. Then in Algorithm~\ref{alg:PDFBS}, we have $\alpha = \alpha_{\gamma, \rho}$ (Equation~\eqref{eq:alphafbs}) and $T = T_{\mathrm{FBS}}^{U, \gamma}$ (Equation~\eqref{eq:forwardbackwardoperator}),
\begin{align*}
\cG^{\mathrm{pre}}({\vx}_{\vf}^k,{\vx}_{\vf}^k, {\vx}_\vf^k; \vx) &\leq \frac{\left(\|\vz^0 - \vz^\ast\|_U + \|\vz^\ast - \vx\|_U\right) \|\vz^0 - \vz^\ast\|_U}{\gamma\sqrt{\underline{\tau}(k+1)}},
\end{align*}
 and $\cG^{\mathrm{pre}}({\vx}_{\vf}^k,{\vx}_{\vf}^k, {\vx}_\vf^k; \vx) = o\left((1+\|\vx\|_U)/\sqrt{k+1}\right)$.
\item \label{thm:nonergodic:part:PRS} \textbf{Nonergodic convergence of PRS:} Let $\vz^\ast$ be a fixed point of $\TPRS$ (Equation~\eqref{eq:TPRS}). Then in Algorithm~\ref{alg:PDPRS}, we have  $\alpha = 1/2$ and $T =  (\TPRS)_{1/2}$ (Equation~\eqref{eq:DRSoperator}). In addition, suppose that $\vf$ (respectively $\vg$) is $L$-Lipschitz, let $\vx^k := \vx_\vg^k$ (respectively $\vx^k := \vx_\vf^k$), and let $\hat{w} = w$ (respectively $\hat{w} =1-w$). Then
\begin{align*}
\cG^{\mathrm{pre}}({\vx}^k,{\vx}^k, {\vx}^k; \vx) &\leq \frac{\left(\|\vz^0 - \vz^\ast\|_U + \|\vz^\ast - \vx\|_U + (\gamma/\sqrt{\rho})(L + |\hat{w}|\|\vS\| \|\vx\|)\right) \|\vz^0 - \vz^\ast\|_U}{\gamma\sqrt{\underline{\tau}(k+1)}},
\end{align*}
and $\cG^{\mathrm{pre}}({\vx}^k,{\vx}^k, {\vx}^k; \vx) = o\left((1+\|\vx\|_U)/\sqrt{k+1}\right).$
\end{remunerate}
\end{theorem}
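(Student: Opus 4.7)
In all three cases (PPA, FBS, PRS with fixed metric and stepsize), the update can be written as a Krasnosel'ski\u{\i}-Mann iteration $\vz^{k+1} = T_{\lambda_k}\vz^k = (1-\lambda_k)\vz^k + \lambda_k T\vz^k$ with $T$ equal to $J_{\gamma U^{-1}(\partial \vf+\vS)}$, $T_{\mathrm{FBS}}^{U,\gamma}$, and $(\TPRS)_{1/2}$ respectively; the averagedness constants $\alpha$ in $\|\cdot\|_U$ are exactly those recorded in Proposition~\ref{prop:avgeraged}, and one checks directly from the update formulas (for PRS, use $\vz^{k+1}-\vz^k = \lambda_k(\vx_\vf^k-\vx_\vg^k)$ from Lemma~\ref{lem:spunif}) that in each case $\vz^{k+1}-\vz^k = \lambda_k(T\vz^k-\vz^k)$. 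This places us in the setting of Theorem~\ref{thm:FPRrate}, which supplies both $\|T\vz^k-\vz^k\|_U^2 \le \|\vz^0-\vz^\ast\|_U^2/(\underline\tau(k+1))$ and $\|T\vz^k-\vz^k\|_U = o(1/\sqrt{k+1})$.

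The core of the argument is to bound the pre-primal-dual gap by a multiple of the fixed-point residual $\|T\vz^k-\vz^k\|_U$. Starting from Proposition~\ref{prop:PDupper}, I apply the cosine rule~\eqref{eq:cosinerule} to rewrite
\begin{align*}
\|\vz^k-\vx\|_U^2-\|\vz^{k+1}-\vx\|_U^2 = 2\dotp{\vz^k-\vz^{k+1},\vz^k-\vx}_U - \|\vz^{k+1}-\vz^k\|_U^2,
\end{align*}
and combine with the $\kappa_u^k$ bounds from Proposition~\ref{prop:upperkeyterms}. For PPA, $\kappa_u^k = (1-2/\lambda_k)\|\vz^{k+1}-\vz^k\|_U^2$ leaves a manifestly nonpositive $-(2/\lambda_k)\|\vz^{k+1}-\vz^k\|_U^2$ remainder. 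For FBS, the pre-primal-dual gap I actually want is $\cG^{\mathrm{pre}}(\vx_\vf^k,\vx_\vf^k,\vx_\vf^k;\vx)$, which differs from $\cG^{\mathrm{pre}}(\vx_\vf^k,\vx_\vg^k,\vx_\vS^k;\vx)$ by exactly $\vg(\vx_\vf^k)-\vg(\vx_\vg^k)$ since $\vx_\vg^k=\vz^k$ and $\vx_\vS^k=\vx_\vf^k$; this cancels the $\vg$-difference inside the $\kappa_u^k$ bound, and the leftover $(\rho-\varepsilon/(\beta\lambda_k))\|\vz^{k+1}-\vz^k\|^2$ combines with $-\|\vz^{k+1}-\vz^k\|_U^2 \le -\rho\|\vz^{k+1}-\vz^k\|^2$ (Lemma~\ref{lem:metricproperties}) to yield a nonpositive leftover $-(\varepsilon/(\beta\lambda_k))\|\vz^{k+1}-\vz^k\|^2$. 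In both cases, dropping the nonpositive term, invoking Cauchy–Schwarz on $2\dotp{\vz^k-\vz^{k+1},\vz^k-\vx}_U$, and using $\|\vz^{k+1}-\vz^k\|_U = \lambda_k\|T\vz^k-\vz^k\|_U$ gives
\begin{align*}
\cG^{\mathrm{pre}}(\vx_\vf^k,\vx_\vf^k,\vx_\vf^k;\vx) \le \frac{\|T\vz^k-\vz^k\|_U\,\|\vz^k-\vx\|_U}{\gamma}.
\end{align*}
The final estimate follows by bounding $\|\vz^k-\vx\|_U \le \|\vz^0-\vz^\ast\|_U + \|\vz^\ast-\vx\|_U$ via Proposition~\ref{prop:bddsum} (Fej\'er monotonicity) and plugging in the $O$ and $o$ estimates of Theorem~\ref{thm:FPRrate}.

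For PRS, the same cosine-rule and Cauchy–Schwarz reduction applied to $\kappa_u^k=(1-2/\lambda_k)\|\vz^{k+1}-\vz^k\|_U^2$ yields the bound displayed above on $\cG^{\mathrm{pre}}(\vx_\vf^k,\vx_\vg^k,\vx_\vS^k;\vx)$, but the theorem asks for $\cG^{\mathrm{pre}}(\vx^k,\vx^k,\vx^k;\vx)$ with $\vx^k\in\{\vx_\vf^k,\vx_\vg^k\}$. Using $\vx_\vS^k = w\vx_\vf^k+(1-w)\vx_\vg^k$, so that $\vx_\vg^k-\vx_\vS^k = w(\vx_\vg^k-\vx_\vf^k)$, one has
\begin{align*}
\cG^{\mathrm{pre}}(\vx_\vg^k,\vx_\vg^k,\vx_\vg^k;\vx) = \cG^{\mathrm{pre}}(\vx_\vf^k,\vx_\vg^k,\vx_\vS^k;\vx) + \bigl(\vf(\vx_\vg^k)-\vf(\vx_\vf^k)\bigr) + w\dotp{\vS(\vx_\vg^k-\vx_\vf^k),-\vx},
\end{align*}
and the identity $\vx_\vg^k-\vx_\vf^k = -(T\vz^k-\vz^k)$ (from $\vz^{k+1}-\vz^k = \lambda_k(\vx_\vf^k-\vx_\vg^k) = \lambda_k(T\vz^k-\vz^k)$) together with $L$-Lipschitz continuity of $\vf$, $\|\cdot\| \le (1/\sqrt{\rho})\|\cdot\|_U$, and $\|\vS\|$-boundedness of $\vS$, bounds the correction by $(1/\sqrt{\rho})(L+|w|\|\vS\|\|\vx\|)\,\|T\vz^k-\vz^k\|_U$, producing the extra term $(\gamma/\sqrt\rho)(L+|\hat w|\|\vS\|\|\vx\|)$ in the final bound. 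The symmetric case $\vx^k=\vx_\vf^k$ is handled identically with $\hat w = 1-w$.

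The main obstacle is purely clerical: keeping the $\vg$-cancellation in FBS straight, and for PRS converting cleanly between the gap at a single common point and the ``raw'' gap at $(\vx_\vf^k,\vx_\vg^k,\vx_\vS^k)$, so that the Lipschitz correction is controlled by the FPR (which is why only the Lipschitz side of the splitting appears in the bound). Once these reductions are made, the conclusion is a direct consequence of Theorem~\ref{thm:FPRrate} applied to the KM form, and the little-$o$ statement follows by the same chain using the $o(1/\sqrt{k+1})$ branch of~\eqref{eq:FPRrates} together with the paper's sign convention for $o$-notation of possibly negative sequences.
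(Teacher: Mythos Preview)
Your proof is correct and reaches exactly the bounds stated. The overall architecture---reduce the pre-primal-dual gap to a multiple of $\|T\vz^k-\vz^k\|_U$ via the fundamental inequality, the key-term identities of Proposition~\ref{prop:upperkeyterms}, and the cosine rule, then invoke Theorem~\ref{thm:FPRrate} and Fej\'er monotonicity---matches the paper's. The PRS correction you make (passing from the raw gap at $(\vx_\vf^k,\vx_\vg^k,\vx_\vS^k)$ to the gap at the single point $\vx_\vg^k$ or $\vx_\vf^k$ via Lipschitz continuity and $\vx_\vf^k-\vx_\vg^k = T\vz^k-\vz^k$) is exactly the paper's.

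There is one genuine, if modest, difference worth noting. The paper exploits that $\vx_\vf^k,\vx_\vg^k,\vx_\vS^k$ do not depend on $\lambda_k$, replaces $\vz^{k+1}$ by $\vz_\lambda := T_\lambda\vz^k$ in the fundamental inequality, and then minimizes over $\lambda\in(0,1/\alpha]$; in the FBS case this forces a separate choice of a small $\widetilde\lambda$ satisfying $\rho+\mu-\varepsilon/(\beta\widetilde\lambda)\le 0$ to kill the residual $+\|\vz_\lambda-\vz^k\|_U^2$ coming from the cosine rule anchored at $\vz_\lambda$. You instead anchor the cosine rule at $\vz^k$, which produces $-\|\vz^{k+1}-\vz^k\|_U^2$ with the opposite sign; combined with $(\rho-\varepsilon/(\beta\lambda_k))\|\vz^{k+1}-\vz^k\|^2$ and $\|\cdot\|_U^2\ge\rho\|\cdot\|^2$, the remainder is nonpositive \emph{for every} admissible $\lambda_k$, so no optimization over $\lambda$ is needed. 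This is a small simplification; the final constants are identical because both $\|\vz_\lambda-\vx\|_U$ and $\|\vz^k-\vx\|_U$ are bounded by $\|\vz^0-\vz^\ast\|_U+\|\vz^\ast-\vx\|_U$ via Fej\'er monotonicity.
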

{\em Proof.}
Fix $k \in \vN$. In all of the following proofs, we will bound the pre-primal-dual gap by a quantity involving $\|T\vz^k - \vz^k\|_U$.  Then the big-$O$ and little-$o$ convergence rates follow directly from Theorem~\ref{thm:FPRrate}. In addition, we will use Equation~\eqref{eq:fejer2} and the independence of $\vx_\vf^k, \vx_\vg^k,$ and $\vx_\vS^k$ from $\lambda_k$ to tighten our upper bounds.  To this end, we will denote $\vz_\lambda := T_{\lambda}(\vz^k)$ (see Equation~\eqref{eq:averagednotation}) and let $C = (0, 1/\alpha]$ where $\alpha$ is averagedness coefficient of $T$. Note that $T_\lambda$ is nonexpansive for all $\lambda \in C$ (see Part~\ref{prop:basicprox:part:wider} of Proposition~\ref{prop:basicprox}). Also note that for $\lambda \in C$, we have $(1/\lambda)(\vz_{\lambda} - \vz^k )= T\vz^k - \vz^k$ and $\|\vz_{\lambda} - \vz^\ast\|_U \leq \|\vz^k - \vz^\ast\|_U \leq \|\vz^0 - \vz^\ast\|_U$ by Equation~\eqref{eq:fejer2} and the monotonicity of $(\|\vz^j - \vz^\ast\|_U)_{j \in \vN}$ (Proposition~\ref{prop:bddsum}). Thus, $\|\vz_{\lambda} - \vx\|_U \leq \|\vz^0 - \vz^\ast\|_U + \|\vz^\ast - \vx\|_U.$ Therefore, for all $\lambda \in (0, 1/\alpha]$, we have
\begin{align}\label{eq:independentFPR}
\frac{\dotp{\vz^k - \vz_{\lambda}, \vz_{\lambda} - \vx}_U}{\lambda} &\leq \|T \vz^k - \vz^k\|_U\|\vz_{\lambda} - \vx\|_U \stackrel{\eqref{eq:FPRrates}}{\leq} \frac{\left(\|\vz^0 - \vz^\ast\|_U + \|\vz^\ast - \vx\|_U\right) \|\vz^0 - \vz^\ast\|_U}{\sqrt{\underline{\tau}(k+1)}}.
\end{align}
Note that the upper key term identities (Proposition~\ref{prop:upperkeyterms}) and the fundamental inequality (Proposition~\ref{prop:PDupper}) continue to hold when $\vz^{k+1}$ is replaced by $\vz_\lambda$. Thus, in each of the cases below, we will minimize the fundamental inequality over all $\lambda \in C$.

Part~\ref{thm:nonergodic:part:PPA}: Proposition~\ref{prop:upperkeyterms} shows that $\kappa_{u}^k(\lambda) = \left(1-{2}/{\lambda}\right)\|\vz_\lambda - \vz^k\|_{U}^2.$ Thus, the fundamental inequality, the cosine rule, and the identity $C = (0, 2]$ show ($\cG^{\mathrm{pre}}$ does not depend on its second input)
\begin{align*}
\cG^{\mathrm{pre}}({\vx}_{\vf}^k,{\vx}_{\vf}^k, {\vx}_\vf^k; \vx) &\leq \inf_{\lambda \in C} \frac{1}{2\gamma\lambda}\left(\left(1-\frac{2}{\lambda}\right)\|\vz_\lambda - \vz^k\|_{U}^2 + \|\vz^{k} - \vx\|_{U}^2 - \|\vz_\lambda - \vx\|_{U}^2\right) \\
&\stackrel{\eqref{eq:cosinerule}}{=}\inf_{\lambda \in C} \frac{1}{2\gamma\lambda}\left(2 \dotp{\vz^k- \vz_\lambda, \vz_\lambda - \vx}_U + 2\left(1-\frac{1}{\lambda}\right)\|\vz_\lambda - \vz^k\|^2_U \right) \\
&\leq \frac{1}{\gamma}\dotp{\vz^k - \vz_1, \vz_1 - \vx}_U \stackrel{\eqref{eq:independentFPR}}{\leq} \frac{\left(\|\vz^0 - \vz^\ast\|_U + \|\vz^\ast - \vx\|_U\right) \|\vz^0 - \vz^\ast\|_U}{\gamma\sqrt{\underline{\tau}(k+1)}}.
\end{align*}

Part~\ref{thm:nonergodic:part:FBS}: First choose $\widetilde{\lambda} \in C$ small enough that $ \rho + \mu- {\varepsilon}/{(\beta\widetilde{\lambda})} \leq 0$. Now recall that Proposition~\ref{prop:upperkeyterms} proves the following inequality: $\kappa_{u}^k(\lambda) \leq \left(\rho- {\varepsilon}/({\beta\lambda})\right) \|\vz^{k+1} - \vz^{k}\|^2 + 2\gamma\lambda\vg(\vx_\vg^k) - 2\gamma\lambda\vg(\vx_\vf^k).$ Thus, the fundamental inequality, the cosine rule, and the identity $C = (0, 1/\alpha]$ show
\begin{align*}
&\cG^{\mathrm{pre}}({\vx}_{\vf}^k,{\vx}_{\vf}^k, {\vx}_\vf^k; \vx) = \cG^{\mathrm{pre}}({\vx}_{\vf}^k,{\vx}_{\vg}^k, {\vx}_\vf^k; \vx) + \vg(\vx_\vf^k)  - \vg(\vx_\vg^k) \\
&\leq \inf_{\lambda\in C}\frac{1}{2\gamma\lambda} \left( 2\gamma\lambda\vg(\vx_\vf^k)  - 2\gamma\lambda\vg(\vx_\vg^k) + \kappa_u^k(\lambda) + \|\vz^{k} - \vx\|_{U}^2 - \|\vz_\lambda - \vx\|_{U}^2\right) \\
&\leq \inf_{\lambda \in C} \frac{1}{2\gamma\lambda}\left( \left(\rho- \frac{\varepsilon}{\beta\lambda}\right) \|\vz_\lambda - \vz^{k}\|^2 + \|\vz^{k} - \vx\|_{U}^2 - \|\vz_\lambda - \vx\|_{U}^2\right) \\
&\stackrel{\eqref{eq:cosinerule}}{=} \inf_{\lambda \in C}\frac{1}{2\gamma\lambda}\left(2 \dotp{\vz^k - \vz_\lambda, \vz_\lambda - \vx}_U + \|\vz_\lambda - \vz^k\|^2_U + \left(\rho- \frac{\varepsilon}{\beta\lambda}\right) \|\vz_\lambda - \vz^{k}\|^2 \right)\\
&\leq \inf_{\lambda \in C}\frac{1}{2\gamma\lambda}\left(2 \dotp{\vz^k - \vz_\lambda , \vz_\lambda - \vx}_U + \left( (\rho+\mu)- \frac{\varepsilon}{\beta\lambda}\right) \|\vz_\lambda - \vz^{k}\|^2 \right) \\
&\leq \frac{1}{\gamma \widetilde{\lambda}} \dotp{\vz^k - \vz_{\widetilde{\lambda}}, \vz_{\widetilde{\lambda}} - \vx}_U \stackrel{\eqref{eq:independentFPR}}{\leq} \frac{\left(\|\vz^0 - \vz^\ast\|_U + \|\vz^\ast - \vx\|_U\right) \|\vz^0 - \vz^\ast\|_U}{\gamma\sqrt{\underline{\tau}(k+1)}}.
\end{align*}

Part~\ref{thm:nonergodic:part:PRS}: We prove the result in the case that $\vf$ is Lipschitz because the other case is symmetric. Proposition~\ref{prop:upperkeyterms} proves the following identity: $\kappa_{u}^k(\lambda) = \left(1-{2}/{\lambda}\right)\|\vz_\lambda - \vz^k\|_{U}^2.$ Thus, the fundamental inequality, the cosine rule, and the identities $\vx_\vf^k - \vx_\vg^k = (1/\lambda)(\vz_\lambda - \vz^k) = T\vz^k - \vz^k$,  $\vx_\vg^k - \vx_\vS^k = w(\vx_\vg^k - \vx_\vf^k)$, and  $C = (0, 2]$ show
\begin{align*}
&\cG^{\mathrm{pre}}(\vx_\vg^k, \vx_\vg^k, \vx_\vg^k; \vx) \\
&\leq \cG^{\mathrm{pre}}(\vx_\vf^k,\vx_\vg^k, \vx_\vS^k; \vx) + \vf(\vx_\vg^k) - \vf(\vx_\vf^k) + \dotp{\vS(\vx_{\vg}^k - \vx_{\vS}^k), -\vx} \\
&\leq \inf_{\lambda \in C}\frac{1}{2\gamma\lambda}\left(\left(1-\frac{2}{\lambda}\right)\|\vz_\lambda - \vz^k\|_{U}^2 + \|\vz^{k} - \vx\|_{U}^2 - \|\vz_\lambda - \vx\|_{U}^2\right) \\
&+ L\|\vx_\vg^k - \vx_\vf^k\| + |w|\|\vS\|\|\vx_\vg^k - \vx_\vf^k\|\|\vx\| \\
&\stackrel{\eqref{eq:cosinerule}}{=}\inf_{\lambda \in C} \frac{1}{2\gamma\lambda}\left(2 \dotp{\vz^k - \vz_\lambda, \vz_\lambda - \vx}_U + 2\left(1-\frac{1}{\lambda}\right)\|\vz_\lambda - \vz^k\|^2_U \right) \\
&+ L\|\vx_\vg^k - \vx_\vf^k\| + |w|\|\vS\|\|\vx_\vg^k - \vx_\vf^k\|\|\vx\| \\
&\leq \frac{1}{\gamma}\dotp{\vz^k - \vz_1, \vz_1 - \vx}_U + L\|\vx_\vg^k - \vx_\vf^k\| + |w|\|\vS\|\|\vx_\vg^k - \vx_\vf^k\|\|\vx\| \\
&\stackrel{\eqref{eq:independentFPR}}{\leq}  \frac{\left(\|\vz^0 - \vz^\ast\|_U + \|\vz^\ast - \vx\|_U\right) \|\vz^0 - \vz^\ast\|_U}{\gamma\sqrt{\underline{\tau}(k+1)}} \\
&\stackrel{\eqref{eq:FPRrates}}{+} \frac{(L + |w|\|\vS\| \|\vx\|)\|\vz^0 - \vz^\ast\|_U}{\sqrt{\rho\underline{\tau}(k+1)}}.\qquad \endproof
\end{align*}

\begin{remark}
Note that we can immediately strengthen the convergence result for PRS in Theorems~\ref{thm:nonergodic} and~\ref{thm:PDerg}. Indeed, we only need to assume that $\vf$ or $\vg$ is Lipschitz on the closed ball $\overline{B_U(\vx^\ast; \|\vz^0 - \vz^\ast\|_U)}$ (where $\vx^\ast = J_{\gamma U^{-1}(\partial \vg + (1-w)\vS)}(\vz^\ast)$) of radius $\|\vz^0 - \vz^\ast\|_U$ (under the metric $\|\cdot\|_U$) because for all $k \in \vN$,
\begin{align*}
\|\vx_\vg^k - \vx^\ast\|_U = \|J_{\gamma U^{-1}(\partial \vg + (1-w)\vS)} (\vz^k) - J_{\gamma U^{-1}(\partial \vg + (1-w)\vS)}(\vz^\ast)\|_U &\leq \|\vz^k - \vz^\ast\|_U \\&\leq \|\vz^0 - \vz^\ast\|_U,
\end{align*}
and by a similar derivation, $\| \vx_\vf^k - \vx^\ast\|_U \leq \|\vz^0 - \vz^\ast\|_U$. Thus, the sequences lie in the ball: $(\vx_\vf^j)_{j \in \vN}, (\vx_\vg^j)_{j \in \vN} \subseteq \overline{B_U(\vx^\ast, \|\vz^0 - \vz^\ast\|_U)}$. We also have $(\overline{\vx}_\vf^j)_{j \in \vN}, (\overline{\vx}_\vg^j)_{j \in \vN} \subseteq \overline{B_U(\vx^\ast, \|\vz^0 - \vz^\ast\|_U)}$ by the convexity of the ball. See \cite[Proposition 8.28]{bauschke2011convex} for conditions that ensure Lipschitz continuity of convex functions on balls.
\end{remark}

\begin{remark}
In general, the $o(1/\sqrt{k+1})$ convergence rates in Theorem~\ref{thm:nonergodic} are the best PRS can obtain for $({\vx}_\vg^j)_{j \in \vN}$ \cite[Theorem 11]{davis2014convergence}.
\end{remark}

\begin{remark}
In general, it is infeasible to take the supremum over the last component of $\cG^{\mathrm{pre}}$ as in Equation~\eqref{eq:ppdgsup}. Thus, in practice we cannot use the pre-primal-dual gap to measure convergence. However, Theorem~\ref{thm:nonergodic} bounds the pre-primal-dual gap at the $k$-th iteration by a multiple of the expression $\|T\vz^k - \vz^k\|\|\vx\|$.  Thus, if the supremum in Equation~\eqref{eq:ppdgsup} can be restricted to a bounded set $D$, then $\|T\vz^k - \vz^k\| \sup_{\vx \in D} \|\vx\|$ can be used as a proxy for the size of the pre-primal-dual gap. See section~\ref{sec:preprimaldualprimal} for examples of such sets $D$.
\end{remark}
%%End Section 4
%%%%%%%%%%%

%%%%%%%%%%%
%%Begin Section 5
\section{Applications}\label{sec:applications}

In this section we will show that the four algorithms from Section~\ref{sec:examplesofUS} are capable of solving highly structured optimization problems:

\begin{problem}[Model problem]\label{prop:modelminimization}
Let $\cH_0$ be a Hilbert space, and let $f, g : \Gamma_0(\cH_0)$. Let $n \in \vN\backslash \{0\}$, and for $i = 1, \cdots, n$, let $\cH_i$ be a Hilbert space, let $h_i, l_i \in \Gamma_0(\cH_i)$, suppose that $h_i \square l_i \in \Gamma_0(\cH_i)$, and let $B_i : \cH_0 \rightarrow \cH_i$ be a bounded linear map. Finally, let $\vB : \cH_0 \rightarrow \prod_{i=1}^n \cH_i$ be the map $x \mapsto (B_1x, \cdots, B_nx)$. Then our model problem is as follows:
\begin{align}\label{eq:pdproblemtosplit}
\Min_{x \in \cH_0} f(x) + g(x) + \sum_{i=1}^n (h_i \square l_i)(B_i x).
\end{align}
In addition, the dual problem is to 
\begin{align*}
\Min_{\vy \in \prod_{i=1}^n \cH_i} \; (f^\ast\square g^\ast)(-\vB^\ast \vy) + \sum_{i=1}^n (h_i^\ast + l_i^\ast)(y_i).
\end{align*}
\end{problem}
\indent All of the algorithms we consider take full advantage of the structure of the infimal convolution in Problem~\ref{prop:modelminimization}.  We note that infimal convolutions are not widespread in applications. Generally, for $i \in \{1, \ldots, n\}$, we think of $h_i \square l_i$ as a regularization of $h_i$ by $l_i$, or vice versa. Indeed, under mild conditions, the smoothness of at least one of $h_i$ and $l_i$ implies the smoothness of the infimal convolution \cite[Section 18.3]{bauschke2011convex}.  When $l_i$ or $h_i$ is chosen properly, this operation is sometimes called \emph{dual-smoothing} \cite{nesterov2005smooth}. 
Finally, we note that we can remove the infimal convolution operation from Problem~\ref{prop:modelminimization} by setting $l_i = \iota_{\{0\}}$ because $h_i \square l_i = h_i$ for all $i = 1, \cdots, n$.  The interested reader should consult~\cite[Proposition 12.14 and Proposition 15.7]{bauschke2011convex} for conditions that guarantee that $h_i \square l_i \in \Gamma_0(\cH_i)$.

We assume the existence of a specific type of solution of Problem~\ref{prop:modelminimization}.
\begin{assump}\label{assump:primaldualassump}
We assume that there exists 
\begin{align*}
x^\ast \in \zer\left( \partial f + \partial g + \sum_{i=1}^n B_i^\ast (\partial h_i \square \partial l_i)(B_i(\cdot))\right). 
\end{align*}
\end{assump}
See \cite[Proposition 4.3]{combettes2012primal} for conditions that guarantee the existence of $x^\ast$. In general, the containment
\begin{align*}
\zer\left( \partial f + \partial g +  \sum_{i=1}^n B_i^\ast (\partial h_i \square \partial l_i)( B_i(\cdot))\right) \subseteq \zer\left( \partial\left( f +  g + \sum_{i=1}^n  (h_i \square l_i)( B_i(\cdot)) \right)\right)
\end{align*}
always holds, but the sets may not be equal. Nevertheless, this assumption is standard.

We now review two possible splittings of Problem~\ref{prop:modelminimization}.  Both splittings will be designated by a ``level." The level is an indication of the number of extra dual variables that are introduced into the problem. Introducing more dual variables makes the problem further separable, and, hence, further parallelizable, but it also increases the memory footprint of the algorithm. It is unclear whether the number of dual variables affects the practical convergence speed of the algorithm in a negative way.

The following proposition is a simple exercise in duality, so we omit the proof.
\begin{proposition}[Level 1 optimality conditions]\label{prop:splittinglevel1}
Let $\vH = \prod_{i=0}^n \cH_i$, and denote an arbitrary point $\vx \in \vH$ by $\vx = (x, y_1, \cdots, y_n) = (x, \vy)$.  For all $\vx \in \vH$, let $\vf(\vx) := f(x) + \sum_{i=1}^n h_i^\ast(y_i)$, let $\vg(\vx) := g(x) + \sum_{i=1}^n l_i^\ast(y_i)$, and let $\vS : \vH \rightarrow \vH$ be the skew map $(x, \vy) \mapsto (\vB^\ast \vy, -\vB x)$.
Then a point $x^\ast \in \cH_0$ satisfies
\begin{align}
0 \in \partial f(x^\ast) + \partial g(x^\ast) + \sum_{i=1}^n  B_i^\ast(\partial h_i \square \partial l_i)(B_i x^\ast)
 \end{align}
if, and only if, there is a vector $\vy^\ast \in \prod_{i=1}^n\cH_i$ such that 
\begin{align}\label{eq:leveloneinclusion}
0 \in \partial \vf(x^\ast, \vy^\ast) + \partial \vg(x^\ast, \vy^\ast) + \vS(x^\ast, \vy^\ast).
\end{align}
\end{proposition}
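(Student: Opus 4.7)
The plan is to expand the inclusion in Equation~\eqref{eq:leveloneinclusion} component-by-component using the block-separable structure of $\vf$ and $\vg$, and the explicit form of $\vS$, and then to match the two coordinate blocks against the primal inclusion using Fenchel conjugacy.

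First I would observe that since $\vf(x,\vy) = f(x) + \sum_i h_i^\ast(y_i)$ and $\vg(x,\vy) = g(x) + \sum_i l_i^\ast(y_i)$ are sums of functions acting on independent components of the product space $\vH = \cH_0 \times \prod_i \cH_i$, their subdifferentials split as Cartesian products: $\partial \vf(x,\vy) = \partial f(x) \times \prod_i \partial h_i^\ast(y_i)$ and similarly for $\partial \vg$. Combined with $\vS(x,\vy) = (\vB^\ast \vy, -\vB x) = (\sum_i B_i^\ast y_i, -B_1 x, \ldots, -B_n x)$, the inclusion~\eqref{eq:leveloneinclusion} is equivalent to the coupled system
\begin{align*}
0 &\in \partial f(x^\ast) + \partial g(x^\ast) + \sum_{i=1}^n B_i^\ast y_i^\ast, \\
0 &\in \partial h_i^\ast(y_i^\ast) + \partial l_i^\ast(y_i^\ast) - B_i x^\ast \qquad (i = 1, \ldots, n).
\end{align*}

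The key step is to rewrite the $n$ dual inclusions using the identity $\partial h_i^\ast = (\partial h_i)^{-1}$ (valid for $h_i \in \Gamma_0(\cH_i)$) together with the definition of the parallel sum $A \square B = (A^{-1}+B^{-1})^{-1}$. Indeed, $B_i x^\ast \in \partial h_i^\ast(y_i^\ast) + \partial l_i^\ast(y_i^\ast)$ means $B_i x^\ast \in (\partial h_i)^{-1}(y_i^\ast) + (\partial l_i)^{-1}(y_i^\ast)$, which by direct inversion is precisely $y_i^\ast \in (\partial h_i \square \partial l_i)(B_i x^\ast)$. Substituting this characterization back into the first inclusion yields
\begin{equation*}
0 \in \partial f(x^\ast) + \partial g(x^\ast) + \sum_{i=1}^n B_i^\ast (\partial h_i \square \partial l_i)(B_i x^\ast),
\end{equation*}
which is the primal inclusion.

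For the converse, if $x^\ast$ satisfies the primal inclusion, then for each $i$ one may select $y_i^\ast \in (\partial h_i \square \partial l_i)(B_i x^\ast)$ such that $-\sum_i B_i^\ast y_i^\ast \in \partial f(x^\ast) + \partial g(x^\ast)$; unwinding the parallel sum via the same equivalence produces a $\vy^\ast$ solving~\eqref{eq:leveloneinclusion}. The only conceptual obstacle is the parallel-sum/conjugate identification, but since this is a direct algebraic equivalence via inversion of graphs, the proof is essentially bookkeeping and the author's claim that it is a ``simple exercise in duality'' is accurate.
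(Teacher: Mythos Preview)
Your proof is correct. The paper actually omits the proof of this proposition, stating only that it ``is a simple exercise in duality,'' and your componentwise expansion together with the identification $y_i^\ast \in (\partial h_i \square \partial l_i)(B_i x^\ast) \Leftrightarrow B_i x^\ast \in \partial h_i^\ast(y_i^\ast) + \partial l_i^\ast(y_i^\ast)$ is precisely the intended exercise.
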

\indent Notice that the subdifferential operators $\partial \vf$ an $\partial \vg$ in Equation~\eqref{eq:leveloneinclusion} are completely separable in the variables of the product space $\vH$.  Thus, evaluating the proximity operators of $\vf$ and $\vg$ can be quite simple.  However, the resolvent $J_{\partial \vf + \vS}$ is not necessarily simple to evaluate. This difficulty motivates the introduction of new metrics on $\vH$ that simplify the resolvent computation (Section~\ref{sec:algorithmclasses}).

Whenever the functions $g$ and $l_i^\ast$ are Lipschitz differentiable for $i \in \{1, \ldots, n\}$ (or equivalently, $l_i$ is strongly convex \cite[Theorem 18.15]{bauschke2011convex}) we can apply FBS or FBF (Algorithms~\ref{alg:PDFBS} and~\ref{alg:PDFBF}) to the splitting in Proposition~\ref{prop:splittinglevel1}. For nonsmooth $g$ and $l_i^\ast$, we can apply the PRS algorithm.

The proof of the following proposition is similar to Proposition~\ref{prop:splittinglevel1}, so we omit it. The proposition is most useful in the case that $g$ or $l_i^\ast$ are not differentiable for some $i \in \{1, \ldots, n\}$.

\begin{proposition}[Level 2 optimality conditions]\label{prop:splittinglevel2}
Let $\vH = \cH_0 \times (\prod_{i=1}^n \cH_i)^2$, and denote an arbitrary $\vx \in \vH$ by $\vx = (x, y_1, \cdots, y_n, v_1, \cdots, v_n) = (x, \vy, \vv)$. For all $\vx \in \vH$, let $\vf(\vx) := f(x) + \sum_{i=1}^n (h_i^\ast(y_i) +l_i(v_i))$, let $\vg(\vx) := g(x)$, and let $\vS : \vH \rightarrow \vH$ be the skew map $(x, \vy, \vv) \mapsto (\vB^\ast \vy, -\vB x + \vv,  -\vy)$. 
Then a point $x^\ast \in \cH_0$ satisfies
\begin{align}
0 \in \partial f(x^\ast) + \partial g(x^\ast) + \sum_{i=1}^n  B_i^\ast(\partial h_i \square \partial l_i)(B_i x^\ast)
 \end{align}
if, and only if, there is a vector $(\vy^\ast, \vv^\ast) \in (\prod_{i=1}^n\cH_i)^2$ such that 
\begin{align}\label{eq:leveltwoinclusion}
0 \in \partial \vf(x^\ast, \vy^\ast, \vv^\ast) + \partial \vg(x^\ast, \vy^\ast, \vv^\ast) + \vS(x^\ast, \vy^\ast,\vv^\ast).
\end{align}
\end{proposition}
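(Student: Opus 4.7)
The plan is to decompose the monotone inclusion $0 \in \partial \vf + \partial \vg + \vS$ componentwise on $\vH = \cH_0 \times (\prod_{i=1}^n \cH_i)^2$ and match each block against either the primal inclusion or a factor of the parallel sum $\partial h_i \square \partial l_i$. The main algebraic ingredients are (i) the separability of $\vf$ and $\vg$ across $\cH_0$, each $y_i$-coordinate, and each $v_i$-coordinate; (ii) the Fenchel--Young identity $y \in \partial h_i(a) \Leftrightarrow a \in \partial h_i^\ast(y)$; and (iii) the identity $\partial h_i \square \partial l_i = ((\partial h_i)^{-1} + (\partial l_i)^{-1})^{-1} = (\partial h_i^\ast + \partial l_i^\ast)^{-1}$.

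First I would write out $\partial \vf(\vx^\ast) + \partial \vg(\vx^\ast) + \vS\vx^\ast$ at $\vx^\ast = (x^\ast, \vy^\ast, \vv^\ast)$ block by block, obtaining the equivalent system
\begin{align*}
&0 \in \partial f(x^\ast) + \partial g(x^\ast) + \sum_{i=1}^n B_i^\ast y_i^\ast, \\
&(\forall i)\quad B_i x^\ast - v_i^\ast \in \partial h_i^\ast(y_i^\ast), \\
&(\forall i)\quad y_i^\ast \in \partial l_i(v_i^\ast).
\end{align*}
The second line is equivalent, by Fenchel--Young, to $y_i^\ast \in \partial h_i(B_i x^\ast - v_i^\ast)$, so setting $a_i := B_i x^\ast - v_i^\ast$ and $b_i := v_i^\ast$ yields $a_i + b_i = B_i x^\ast$ together with $y_i^\ast \in \partial h_i(a_i)$ and $y_i^\ast \in \partial l_i(b_i)$. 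By the parallel-sum identity above, this says precisely $y_i^\ast \in (\partial h_i \square \partial l_i)(B_i x^\ast)$, and substituting into the first line produces the primal inclusion on $x^\ast$.

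For the converse direction I would start from the primal inclusion, pick subgradients $y_i^\ast \in (\partial h_i \square \partial l_i)(B_i x^\ast)$ realizing the sum, and unpack the parallel sum to extract $a_i, b_i$ with $a_i + b_i = B_i x^\ast$, $y_i^\ast \in \partial h_i(a_i)$, and $y_i^\ast \in \partial l_i(b_i)$. Defining $v_i^\ast := b_i$ then reverses each of the three equivalences and recovers the level-2 inclusion~\eqref{eq:leveltwoinclusion}.

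There is no serious obstacle: the only subtle step is recognizing that introducing the auxiliary variable $v_i^\ast$ is exactly the mechanism that splits $\partial h_i \square \partial l_i$ into its two factors $\partial h_i^\ast$ and $\partial l_i$, at the cost of $n$ additional dual coordinates. Once the parallel sum is rewritten via inverses and Fenchel--Young, the equivalence reduces to bookkeeping, which is why the proof mirrors that of Proposition~\ref{prop:splittinglevel1}.
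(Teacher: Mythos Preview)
Your proposal is correct and matches the approach the paper has in mind: the paper omits the proof entirely, noting only that it is similar to Proposition~\ref{prop:splittinglevel1} (itself called ``a simple exercise in duality''), and your componentwise decomposition together with the Fenchel--Young and parallel-sum identities is precisely that exercise carried out in detail.
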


Note that if for some $i \in \{1, \ldots, n\}$, $l_i$ is differentiable, we can ``assign" it to the function $\vg$ instead of ``assigning" it to $\vf$.  If $g$ is also differentiable, we can apply FBS to the inclusion.

There are many splittings that solve Problem~\ref{prop:modelminimization}. Furthermore, the complexity of Problem~\ref{prop:modelminimization} can be increased in various ways, e.g., by precomposing each of $h_i$ and $l_i$ with linear operators \cite{becker2013algorithm,bot2013algorithm}, or by solving systems of such inclusions \cite{combettes2013systems,boct2013solving}.  We choose to discuss this relatively simple formulation for clarity of exposition.

The next several sections relate the results and notation of the previous sections to the level 1 and 2 splittings.  

\subsection{Primal-dual gap functions}\label{sec:preprimaldualprimal}

In this section, we discuss the pre-primal-dual gap function in the context of the level 1 splitting in Proposition~\ref{prop:splittinglevel1}.  We give sufficient conditions for the gap function (Definition~\ref{defi:preprimaldualgap}) to bound the primal and dual objectives of Problem~\ref{prop:modelminimization} and show that the pre-primal-dual gap also bounds certain squared norms that arise from the strong convexity and differentiability of the terms of the objective.

In the level 1 splitting, the pre-primal-dual gap has the following form: for all $(x, \vy), (x^\ast,\vy^\ast) \in \vH$ (with components defined as in Proposition~\ref{prop:splittinglevel1}), we have
\begin{align*}
 \cG^{\mathrm{pre}}(\vx,\vx, \vx; \vx^\ast) &= f(x) + g(x) - f(x^\ast) - g(x^\ast) + \dotp{x-x^\ast, \vB^\ast \vy^\ast} \\
&+ \sum_{i=1}^n \left(h_i^\ast(y_i) + l_i^\ast(y_i) - h_i^\ast(y^\ast_i) - l_i^\ast(y_i^\ast)\right) - \dotp{\vB x^\ast, \vy - \vy^\ast}, \numberthis \label{eq:ppdgaplevel1}
\end{align*}
where we used the identity $\dotp{\vS \vx, -\vx^\ast} = \dotp{\vS\vx, \vx - \vx^\ast}$. If $\vx^\ast$ satisfies the inclusion in Proposition~\ref{prop:splittinglevel1}, then
\begin{align}\label{eq:ppdgaplevel1subs}
-\vB^\ast \vy^\ast \in  \partial f(x^\ast) + \partial g(x^\ast) && \mathrm{and} && B_i x^\ast \in \partial h_i^\ast(y_i^\ast) + \partial l_i^\ast(y_i^\ast).
\end{align}
We will now bound several terms that arise from the strong convexity and Lipschitz differentiability of the terms in the objective function.

We follow the convention that every closed, proper, and convex function $F : \cH_0 \rightarrow (-\infty, \infty]$ is $\mu_F$-strongly convex and $\tnabla F$ is $L_F$-Lipschitz for some $\mu_F \in \vR_+$ and $L_F  \in [0, +\infty]$. If $F$ is not differentiable, then we let $L_F = \infty$. In addition, if $L_F < \infty$, then $\tnabla F = \nabla F$ is Lipschitz. Note that we allow the $\mu_F = 0$.  The following quantity is useful for summarizing the lower bounds that we derive from strong convexity and Lipschitz differentiability: for all $x \in \cH_0$ and $y \in \dom(\partial F)$, if
\begin{align}\label{eq:snotation}
S_F(x, y) &:=  \begin{cases}
\max\left\{\frac{\mu_F}{2}\|x - y\|^2, \frac{1}{2L_F}\|\nabla F(x) - \nabla F(y)\|^2\right\} & \text{if } L_F < \infty; \\
\frac{\mu_F}{2}\|x - y\|^2 & \text{otherwise;}
\end{cases}
\end{align}
then combine \cite[Theorem 18.15(iv) and Proposition 16.9]{bauschke2011convex} to get 
\begin{align}\label{eq:strongconvexandlipschitzlowerbound}
F(x) &\geq F(y) + \dotp{x-y, \tnabla F(y)} + S_F(x, y). 
\end{align}
We use the analogous notation for $f, g$ and the conjugate functions $h_i^\ast, l_i^\ast$ for $i=1, \cdots, n$.  Therefore, if we apply the lower bound in Equation~\eqref{eq:strongconvexandlipschitzlowerbound} to each of the functions in Equation~\eqref{eq:ppdgaplevel1} and use the subgradient identities in Equation~\eqref{eq:ppdgaplevel1subs} to cancel inner products, we get
\begin{align}\label{eq:primaldualgapboundstrongterms}
 \cG^{\mathrm{pre}}(\vx,\vx, \vx; \vx^\ast) &\geq S_f(x, x^\ast) + S_g(x, x^\ast) + \sum_{i=1}^n \left(S_{h_i^\ast}(y_i, y_i^\ast) + S_{l_i^\ast}(y_i, y_i^\ast)\right).
\end{align}
Equation~\eqref{eq:primaldualgapboundstrongterms} shows that convergence rates for the pre-primal-dual gap function immediately imply the same convergence rates for the $S_{\cdot}(\cdot, \cdot)$ functions in Equation~\eqref{eq:snotation}.  Note that this lower bound does not require that $\dom(\vf)$ or $\dom(\vg)$ are bounded.  

The next proposition gives sufficient conditions under which the pre-primal-dual gap bounds the primal and dual objectives.  In general, we cannot expect such a bound to hold, unless several terms in the objective are Lipschitz continuous or certain subdifferentials are locally bounded.

\begin{proposition}[Level 1 gap function bounds]\label{prop:gapsplitting1}
Let $x^\ast$ be a minimizer of Problem~\ref{prop:modelminimization}. Assume the notation of Proposition~\ref{prop:splittinglevel1}. Let $D_1 \subseteq \cH$ and let $D_2 \subseteq \prod_{i=1}^n \cH_i$ be bounded sets. Then for any sequence of points $((x^j, \vy^j))_{j\geq0} \subseteq \dom(f + g)\times \prod_{i = 1}^n \dom(h_i^\ast + l_i^\ast)$, the inequality
\begin{align*}
& f(x^k) + g(x^k) + \sum_{i=1}^n (h_i \square l_i)(B_i x^k) -  \left(f(x^\ast) + g(x^\ast) + \sum_{i=1}^n (h_i \square l_i)(B_i x^\ast)\right) \\
&\leq \sup_{\vx \in \{x^\ast\}\times D_2} \cG^{\mathrm{pre}}(\vx^k,\vx^k, \vx^k; \vx)
\end{align*}
holds for all $k \in \vN$ provided either of the following hold:
\begin{remunerate}
\item\label{prop:gapsplitting1:partdomain} $\dom(h_1^\ast + l_1^\ast) \times \cdots \times \dom(h_n^\ast + l_n^\ast) \subseteq D_2$;
\item \label{prop:gapsplitting1:part:subgradient}  $\partial (h_1 \square l_1)(B_1x^k) \times \cdots \times  \partial (h_n \square l_n)(B_nx^k) \subseteq D_2$.
\end{remunerate}

Similarly, the inequality
\begin{align*}
 &(f^\ast \square g^\ast)(-\vB^\ast \vy^k) + \sum_{i=1}^n (h_i^\ast + l_i^\ast)(y_i^k) -  \left((f^\ast\square g^\ast)(-\vB^\ast \vy^\ast) + \sum_{i=1}^n (h_i^\ast + l_i^\ast)(B_i y_i^\ast)\right) \\
 &\leq \sup_{\vx \in D_1\times \{\vy^\ast\}} \cG^{\mathrm{pre}}(\vx^k,\vx^k, \vx^k; \vx)
\end{align*}
holds for all $k \in \vN$ provided either of the following hold:
\begin{remunerate}
\item\label{prop:gapsplittingdual:partdomain}  $\dom(f + g) \subseteq D_1$;
\item \label{prop:gapsplittingdual:part:subgradient}  $\partial (f^\ast \square g^\ast)(-\vB^\ast \vy^k) \subseteq D_1$.
\end{remunerate}
\end{proposition}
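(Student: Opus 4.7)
The proof of Proposition \ref{prop:gapsplitting1} centers on unpacking the expression for $\cG^{\mathrm{pre}}$ in the level 1 splitting of Proposition \ref{prop:splittinglevel1} and then invoking Fenchel duality twice: once to identify the supremum over $D_2$ with the infimal convolution $h_i \square l_i$, and once to absorb the leftover cross terms using the Fenchel-Young inequality.

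For the primal bound, I would begin by substituting $\vf(\vx) = f(x) + \sum_i h_i^\ast(y_i)$, $\vg(\vx) = g(x) + \sum_i l_i^\ast(y_i)$, and $\vS(x,\vy) = (\vB^\ast \vy, -\vB x)$ into the definition of $\cG^{\mathrm{pre}}(\vx^k,\vx^k,\vx^k;\vx)$ evaluated at $\vx = (x^\ast, \vy)$. After exploiting skew symmetry, the expression decouples into primal-only terms, dual-only terms, and the coupling contribution $\sum_i[\dotp{B_i x^k, y_i} - (h_i^\ast + l_i^\ast)(y_i)]$. Because $(h_i \square l_i)^\ast = h_i^\ast + l_i^\ast$ and $h_i \square l_i \in \Gamma_0(\cH_i)$ by hypothesis, the Fenchel-Moreau theorem gives
\begin{align*}
(h_i \square l_i)(B_i x^k) = \sup_{y_i \in \cH_i} \left[\dotp{B_i x^k, y_i} - (h_i^\ast + l_i^\ast)(y_i)\right].
\end{align*}
Under hypothesis \ref{prop:gapsplitting1:partdomain}, the integrand is $-\infty$ outside $\dom(h_i^\ast + l_i^\ast) \subseteq D_2$, so restricting the supremum to $D_2$ leaves it unchanged; under hypothesis \ref{prop:gapsplitting1:part:subgradient}, any $y_i^{\ast,k} \in \partial(h_i \square l_i)(B_i x^k)$ achieves equality in Fenchel-Young, and such a point belongs to $D_2$ by assumption. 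In either case, the separable supremum over $D_2$ is at least $\sum_i (h_i \square l_i)(B_i x^k)$.

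The remaining terms $\sum_i (h_i^\ast + l_i^\ast)(y_i^k) - \dotp{\vB^\ast \vy^k, x^\ast}$ are dispatched by the Fenchel-Young inequality
\begin{align*}
(h_i^\ast + l_i^\ast)(y_i^k) + (h_i \square l_i)(B_i x^\ast) \geq \dotp{B_i x^\ast, y_i^k},
\end{align*}
which after summation and rearrangement produces exactly the missing $-\sum_i (h_i \square l_i)(B_i x^\ast)$ needed to obtain the primal gap at $x^\ast$ on the right-hand side. Combining the two inequalities yields the claimed primal bound.

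The dual bound follows by a symmetric argument: fix the dual component at $\vy^\ast$, take the supremum over $x \in D_1$, identify $\sup_{x \in D_1}[-\dotp{\vB^\ast \vy^k, x} - f(x) - g(x)]$ with $(f+g)^\ast(-\vB^\ast \vy^k)$ via hypotheses \ref{prop:gapsplittingdual:partdomain} or \ref{prop:gapsplittingdual:part:subgradient} (using either the domain-of-the-conjugate trick or a subgradient maximizer, just as in the primal case), and then close the estimate by Fenchel-Young applied to the pair $f + g$ and $x^k$. The chief subtlety to keep in mind is the implicit constraint qualification needed so that $(f+g)^\ast = f^\ast \square g^\ast$ --- the form in which the dual objective is written in Problem \ref{prop:modelminimization} --- otherwise everything reduces to routine manipulation of Fenchel duality with careful bookkeeping between the two alternative hypotheses.
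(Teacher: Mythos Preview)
Your proposal is correct and follows essentially the same route as the paper: expand $\cG^{\mathrm{pre}}$ in the level~1 variables, use Fenchel--Moreau and $(h_i\square l_i)^\ast = h_i^\ast + l_i^\ast$ to reduce the supremum over $\vy$ to $D_2$ under either hypothesis, and then apply the Fenchel--Young inequality to absorb the residual cross term $\sum_i(h_i^\ast+l_i^\ast)(y_i^k)-\dotp{\vB^\ast\vy^k,x^\ast}$; the paper likewise declares the dual part ``similar.'' Your remark about the constraint qualification underlying $(f+g)^\ast = f^\ast\square g^\ast$ is a valid subtlety that the paper leaves implicit.
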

{\em Proof.} 
Fix $k \in \vN$. We only consider the primal case because the dual case is similar. For all $i \in \{1, \cdots, n\}$, the Fenchel-Moreau Theorem~\cite[Theorem 13.32]{bauschke2011convex}, the identity $h_i\square l_i = (h_i^\ast + l_i^\ast)^\ast$, and Conditions~\ref{prop:gapsplitting1:partdomain} and~\ref{prop:gapsplitting1:part:subgradient} show that we can reduce the domain of the following supremum: 
\begin{align*}
\sum_{i=1}^n (h_i\square l_i)(B_{i}x^k) &= \sup_{\vy\in \vH} \left(\dotp{\vB x^k, \vy} - \sum_{i=1}^n(h_i^\ast(y_i) + l_i^\ast(y_i))\right) \\
&= \sup_{\vy \in D_2}\left( \dotp{\vB x^k, \vy} - \sum_{i = 1}^n(h_i^\ast(y_i) + l_i^\ast(y_i))\right).
\end{align*}
In addition, the Fenchel-Young inequality shows that 
\begin{align*}
\sum_{i =1}^n \left(h_i^\ast(y_i^k) + l_i^\ast(y_i^k)\right) - \dotp{x^\ast, \vB^\ast\vy^k}& \geq  -  \sum_{i = 1}^n (h_i \square l_i)(B_ix^\ast).
\end{align*}
Therefore, 
\begin{align*}
& \sup_{\vx \in \{x^\ast\} \times D_2} \cG^{\mathrm{pre}}(\vx^k,\vx^k, \vx^k; \vx) \\
&= f(x^k) + g(x^k) - f(x^\ast) - g(x^\ast) + \sum_{i=1}^n (h_i^\ast(y_i^k) + l_i^\ast(y_i^k)) - \dotp{x^\ast, \vB^\ast\vy^k} \\
&\hspace{20pt}+\sup_{\vy \in D_2} \left(\dotp{\vB x^k, \vy} - \sum_{i = 1}^n(h_i^\ast(y_i) + l_i^\ast(y_i))\right) \\
&\geq  f(x^k) + g(x^k) + \sum_{i=1}^n (h_i \square l_i)(B_i x^k) -  \left(f(x^\ast) + g(x^\ast) + \sum_{i=1}^n (h_i \square l_i)(B_i x^\ast)\right). \qquad\endproof
\end{align*}

Fix $i \in \{1, \ldots, n\}$. The bounded domain conditions in Proposition~\ref{prop:gapsplitting1} are related to the Lipschitz continuity of the objective functions.  Indeed, if $h_i$ is Lipschitz, it follows that $\dom(h_i^\ast)$ is bounded \cite[Proposition 4.4.6]{borwein2010convex}.  In addition, $\dom(h_i^\ast+ l_i^\ast) = \dom(h_i^\ast) \cap \dom(l_i^\ast)$. Thus, if $h_i^\ast$ has bounded domain, so does $h_i^\ast+ l_i^\ast$. 

The bounded subgradient conditions in Proposition~\ref{prop:gapsplitting1} are satisfied for $h_i\square l_i$ if the infimal convolution is continuous everywhere and the sequence $(B_ix^j)_{j \in \vN}$ is convergent. Indeed, in this case $\partial (h_i\square l_i)$ is locally bounded \cite[Proposition 16.14(iii)]{bauschke2011convex} and hence, the union $\bigcup_{j \in \vN} \partial (h_i \square l_i)(B_i x^j)$ is bounded. See~\cite[Remark 2.2]{boct2014convergence} and~\cite{bo2014convergence} for similar remarks in the context of primal-dual FBF and FBS algorithms.

\subsection{Two algorithm classes}\label{sec:algorithmclasses}

In this section, we study the algorithms that arise for different classes of maps $(U_j)_{j \in \vN}$ and show how to compute the resolvent and forward-backward operators needed in order to apply the PPA, FBS, PRS, and FBF algorithms just as they appear in Section~\ref{sec:US}. 

We fix the following notation for the rest of this section: Let $\mu_{V_i} > 0$ and let $V_i \in \cS_{\mu_{V_i}}(\cH_i)$ for $i = 0, \cdots, n$. Let $\mu_{W_i} > 0$ and let  $W_i \in \cS_{\mu_{W_i}}(\cH_i)$ for $i =1, \cdots, n$.  These strongly monotone maps induce metrics on the spaces $\cH_i$ for $i=0, \cdots, n$. They can be as simple as ``diagonal" metrics, but they can also incorporate second order information. A discussion on the best metric choice is beyond the scope of this paper, so we just refer the reader to \cite{pock2011diagonal} for some applications of  fixed ``diagonal" metrics, and \cite{goldstein2013adaptive} for varying ``diagonal" metrics that satisfy conditions akin to Assumption~\ref{assump:variablemetric}.

Now define ``block-diagonal" maps
\begin{align}\label{eq:blockdiagonal}
\vV := V_1 \oplus \cdots \oplus V_n  \in \cS_{\mu_{\vV}}\left(\prod_{i=1}^n\cH_i\right) && \mathrm{and} && \vW :=  W_1 \oplus \cdots \oplus W_n \in\cS_{\mu_{\vW}}\left(\prod_{i=1}^n\cH_i\right)
\end{align}
where $\mu_{\vV} = \min\{\mu_{V_1}, \cdots, \mu_{V_n}\}$, and $\mu_{\vW} = \min\{\mu_{W_1}, \cdots, \mu_{W_n}\}$.
The rest of this section will build three types of metrics from $V_0, \vV, \vW$. 

Finally, note that Part~\ref{prop:basicprox:part:J} of Proposition~\ref{prop:basicprox} shows the following: for all $\vz \in \vH$, 
\begin{align}\label{eq:equivalencemetricresolvent}
\vz^+ = J_{U^{-1} (\partial \vf + \vS)}(\vz) && \Longleftrightarrow && U(\vz - \vz^+) \in \partial \vf(\vz^+) + \vS\vz^+.
\end{align}
See Proposition~\ref{prop:metricstdFBS},~\ref{prop:metric2FBS}, and~\ref{prop:metricw0FBS} for examples of resolvent computations.

\subsubsection{First metric class}\label{sec:firstmetrics}

In this section, our metrics depend on a parameter $w$, which appears in  Algorithm~\ref{alg:PDPRS}.  We only use the metric for the case that $w \in \{0, 1/2, 1\}$, but we state all of our results for the general case $w \in \vR$.  The case $w = 1/2$ first appeared in \cite[Theorem 2.1]{bo?2013douglas} (for certain $\vV$ and $V_0$), and the case $w = 1$ first appeared in \cite[Equation (2.5)]{he2012convergence} (for certain $\vV$ and $V_0$). See also~\cite[Relation (3.14)]{vu2013splitting}.

\begin{proposition}\label{prop:metricclass1}
Let $w \in \vR$. Assume the setting of Proposition~\ref{prop:splittinglevel1}. Define a map $U_w : \vH \rightarrow \vH$ as follows: for all $\vx = (x, \vy) \in \vH$,
\begin{align}\label{eq:level1metric}
U_w\vx &:= \left(V_0x - w\vB^\ast\vy, -w\vB x + \vV\vy  \right).
\end{align}
Suppose that $w^2\|\vV^{-1/2} \vB V_0^{-1/2}\|^2 < 1$. Then $U_w$ is self adjoint and strongly monotone: for all $\vx \in \vH$,
\begin{align}\label{eq:level1monotone}
\dotp{\vx, U_w\vx} &\geq \frac{1}{2}\left(1 - w^2\|\vV^{-1/2} \vB V_0^{-1/2}\|^2\right) \min\{\mu_{V_0}, \mu_{\vV}\}\left( \|x\|^2 + \|\vy\|^2\right).
\end{align}

Assume the setting of Proposition~\ref{prop:splittinglevel2}. Define a map $U_w' : \vH \rightarrow \vH$ as follows: for all $\vx = (x, \vv, \vy) \in \vH$,
\begin{align}\label{eq:level2metric}
U_w'\vx &:= \left(V_0x - w\vB^\ast\vy , \vV\vy -w\vB x   + w\vv, w\vy + \vW \vv  \right).
\end{align}
Suppose that $w^2\|\vV^{-1/2} \vB V_0^{-1/2}\|^2+ w^2\|\vW^{-1/2} \vV^{-1/2}\|^2 < 1$. Then 
\begin{align*}
\dotp{\vx, U_w'\vx} &\geq \frac{1}{3}\left(1 - w^2\|\vV^{-1/2} \vB V_0^{-1/2}\|^2- w^2\|\vW^{-1/2} \vV^{-1/2}\|^2\right) \\
&\times \min\{\mu_{V_0},\mu_{\vV}, \mu_{\vW} \}\left( \|x\|^2 + \|\vy\|^2 + \|\vv\|^2\right). \numberthis\label{eq:level2monotone}
\end{align*}
\end{proposition}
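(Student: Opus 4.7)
The plan is to handle both halves with one recipe: (i) read off the block-matrix form of each map to see that it is self-adjoint, (ii) expand $\dotp{\vx, U\vx}$ into a quadratic form in the weighted seminorms, (iii) bound the cross-coupling terms via a whitening substitution plus the defining operator norms, and (iv) pass from the weighted-norm bound to the unweighted-norm bound using $\mu_{V_0}, \mu_{\vV}, \mu_{\vW}$. In block form,
\[
U_w \sim \begin{pmatrix} V_0 & -w\vB^\ast \\ -w\vB & \vV \end{pmatrix}, \qquad
U_w' \sim \begin{pmatrix} V_0 & -w\vB^\ast & 0 \\ -w\vB & \vV & wI \\ 0 & wI & \vW \end{pmatrix},
\]
where I order the three coordinates of $U_w'$ as $(x,\vy,\vv)$, consistent with Proposition~\ref{prop:splittinglevel2}. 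The diagonal blocks are self-adjoint by hypothesis and the off-diagonal blocks are mutual adjoints, so self-adjointness is immediate in both cases. Direct expansion yields
\[
\dotp{\vx, U_w \vx} = \|x\|_{V_0}^2 + \|\vy\|_{\vV}^2 - 2w\dotp{\vB x, \vy},
\]
and $\dotp{\vx, U_w'\vx}$ is the same quantity plus $\|\vv\|_{\vW}^2 + 2w\dotp{\vy,\vv}$.

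Next I would whiten the cross terms. Setting $a := \|V_0^{1/2}x\|$, $b := \|\vV^{1/2}\vy\|$, and (for the second case) $d := \|\vW^{1/2}\vv\|$, the identity $\dotp{\vB x, \vy} = \dotp{\vV^{-1/2}\vB V_0^{-1/2}(V_0^{1/2}x), \vV^{1/2}\vy}$ together with Cauchy--Schwarz gives $|2w\dotp{\vB x,\vy}| \leq 2\alpha\, ab$ with $\alpha := |w|\,\|\vV^{-1/2}\vB V_0^{-1/2}\|$, and analogously $|2w\dotp{\vy,\vv}| \leq 2\beta\, bd$ with $\beta := |w|\,\|\vW^{-1/2}\vV^{-1/2}\|$. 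For $U_w$ only $a,b,\alpha$ are in play; averaging the two complete-the-square inequalities $(a-\alpha b)^2 \geq 0$ and $(b-\alpha a)^2 \geq 0$ gives $a^2 + b^2 - 2\alpha ab \geq \tfrac{1-\alpha^2}{2}(a^2+b^2)$, and $a^2+b^2 \geq \min\{\mu_{V_0},\mu_{\vV}\}(\|x\|^2+\|\vy\|^2)$ delivers the level-1 estimate exactly.

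For $U_w'$, write $s := \alpha^2+\beta^2$ and apply Young's inequality $2\alpha ab \leq \epsilon a^2 + (\alpha^2/\epsilon)b^2$ and $2\beta bd \leq \epsilon d^2 + (\beta^2/\epsilon)b^2$ with the common choice $\epsilon := (2+s)/3$. This produces coefficients $1-\epsilon = (1-s)/3$ on $a^2$ and $d^2$ and $1 - s/\epsilon = (2-2s)/(2+s)$ on $b^2$; the latter dominates $(1-s)/3$ by the one-variable inequality $(s-1)(s-4) \geq 0$, which is valid since $s<1$. Converting to unweighted norms via $\mu_{V_0}, \mu_{\vV}, \mu_{\vW}$ then finishes the proof. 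The main obstacle is pinning down the Young parameter $\epsilon = (2+s)/3$ so that all three resulting coefficients simultaneously dominate the advertised constant $(1-s)/3$; once that $\epsilon$ is identified, everything else is routine algebra and operator-norm bookkeeping, and the level-1 bound is recovered (with a slightly stronger constant) as the degenerate $\beta=0$ case.
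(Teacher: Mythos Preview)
Your proof is correct. The paper actually omits its own proof of this proposition, pointing instead to \cite[Lemma~4.3, Equation~(4.14)]{pesquet2014class} for the $w=1$ level-1 case and stating that the extension to general $w$ and to the level-2 estimate is straightforward. Your argument is a clean, self-contained version of the expected strategy: whiten the coupling via $V_0^{1/2}, \vV^{1/2}, \vW^{1/2}$, bound the bilinear cross terms by the operator norms $\alpha = |w|\,\|\vV^{-1/2}\vB V_0^{-1/2}\|$ and $\beta = |w|\,\|\vW^{-1/2}\vV^{-1/2}\|$, and then use Young-type inequalities to absorb these into the diagonal.

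Two small remarks. First, your parenthetical about reordering the coordinates of $U_w'$ as $(x,\vy,\vv)$ is exactly right and worth keeping: the statement writes ``$\vx = (x,\vv,\vy)$'' but the component formulas only define a self-adjoint map if one reads them in the order $(x,\vy,\vv)$, which is also the ordering fixed in Proposition~\ref{prop:splittinglevel2}. Second, the level-1 bound you derive,
\[
a^2 + b^2 - 2\alpha ab \;\geq\; \tfrac{1}{2}(1-\alpha^2)(a^2+b^2),
\]
is sharper than the naive $2\alpha ab \leq \alpha(a^2+b^2)$, which would only give the factor $(1-\alpha)$; averaging the two completed squares is precisely what produces the advertised $(1-\alpha^2)/2$. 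For the level-2 case, the specific Young parameter $\epsilon = (2+s)/3$ with $s = \alpha^2+\beta^2$ is the tight choice that makes the $a^2$ and $d^2$ coefficients hit $(1-s)/3$ exactly, and the check $(s-1)(s-4) \geq 0$ for the $b^2$ coefficient is valid since $s < 1$ forces both factors to be negative.
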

\indent We omit the proof of Proposition~\ref{prop:metricclass1} because Equation~\eqref{eq:level1monotone} is shown in~\cite[Lemma 4.3, Equation (4.14)]{pesquet2014class} when $w = 1$, the extension to general $w$ is straightforward, and Equation~\eqref{eq:level2monotone} has nearly the same proof. 

Note that our conditions for ergodic convergence in Theorem~\ref{thm:PDerg} require the metric inducing maps to be almost decreasing up to a summable residual in the Loewner partial ordering $\succcurlyeq$ (see Section~\ref{sec:notation}).  If $w \in \vR$ and $((U_w)_j)_{j \in \vN}$ is a sequence of maps defined as in Equation~\eqref{eq:level1metric}, we have
\begin{align*}
((U_w)_k - (U_w)_{k+1})\vx = \left(\left(V_{0, k} - V_{0, k+1}\right)x, \left(\vV_k - \vV_{k+1}\right)\vy\right)
\end{align*}
for all $\vx \in \vH$ and $k \in \vN$. Thus, if for all $k \in \vN$, we have $V_{0,k} \succcurlyeq V_{0, k+1}$ and $\vV_{k} \succcurlyeq \vV_{k+1}$, we can guarantee that the product metric is decreasing (Lemma~\ref{lem:metricproperties}).  A similar result holds for the level 2 metrics in Equation~\eqref{eq:level2metric}.  

The following proposition shows how to evaluate the FBS operator under the metrics induced by $U_w$ and $U_w'$. Note that the results of Proposition~\ref{prop:metricstdFBS} are not new. The level 1 case with $w \in \{0, 1/2, 1\}$ has appeared implicitly in several papers, including \cite{condat2013primal,vu2013splitting,combettes2012variable}. It has also explicitly appeared in~\cite[Lemma 4.5]{pesquet2014class}. In addition, the proof of the level 2 case appeared in \cite[Equation (2.38)]{bo?2013douglas}. Thus, we omit the proof. 

\begin{proposition}[Forward-Backward operators under the first metric class]\label{prop:metricstdFBS}
Let $w \in \vR$. Assume the setting of Propositions~\ref{prop:splittinglevel1} and~\ref{prop:metricclass1}, and suppose that $U_w \in \cS_\rho(\vH)$ (Equation~\eqref{eq:level1metric}) for some $\rho> 0$. Let $\vz := (x, \vy) \in \vH$. Suppose that $g, l_1^\ast, \cdots, l_n^\ast$ are differentiable. Then $\vz^+ := J_{U_w^{-1} \left(\partial \vf + w\vS\right)}(\vz - U_w^{-1}\nabla \vg(\vz))$ has the following form: $\vz^+ = (x^+, \vy^+) \in \vH$ where 

\RestyleAlgo{plain}
\SetAlgoVlined
\begin{algorithm}[H]
\SetKwInOut{Input}{input}\SetKwInOut{Output}{output}
\SetKwComment{Comment}{}{}
{
$x^+ = \prox_{f}^{V_0}(x - V_0^{-1} (w\vB^\ast \vy + \nabla g(x)))$\;
\For{$i=1,~2,\ldots, n$, in parallel}{
$y_{i}^{+} = \prox_{h_i^\ast}^{V_i} (y_i + V_i^{-1}(wB_i(2x^{+} - x) - \nabla l_i^\ast (y_i)))$\;
}}
\label{alg:levelonefboperator}
\end{algorithm}

Assume the setting of Proposition~\ref{prop:splittinglevel2}, and suppose that $U_w' \in \cS_\rho(\vH)$ (Equation~\eqref{eq:level2metric}) for some $\rho> 0$. Let $\vz := (x, \vy, \vv) \in \vH$, and suppose that $g$ is differentiable. Then $\vz^+ := J_{(U_w')^{-1} \left(\partial \vf + w\vS\right)}(\vz - (U_w')^{-1}\nabla \vg(\vz))$ has the following form: $\vz^+ = (x^+, \vv^+, \vy^+) \in \vH$ where 

\RestyleAlgo{plain}
\SetAlgoVlined
\begin{algorithm}[H]
\SetKwInOut{Input}{input}\SetKwInOut{Output}{output}
\SetKwComment{Comment}{}{}
{
$x^+ = \prox_{f}^{V_0}(x - V_0^{-1} (w\vB^\ast \vy + \nabla g(x)))$\;
\For{$i=1,~2,\ldots, n$, in parallel}{
$v_i^+ = \prox_{l_i}^{W_i}(v_i + wW_i^{-1} y_i)$\;
$y_{i}^{+} = \prox_{h_i^\ast}^{V_i} (y_i + V_i^{-1}(wB_i(2x^{+} - x) - (2v_i^+ - v_i)))$\;
}}
\label{alg:leveltwofboperator}
\end{algorithm}
\end{proposition}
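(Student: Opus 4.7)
The proof will rest on the resolvent characterization in Equation~\eqref{eq:equivalencemetricresolvent}, which converts the evaluation of $\vz^+ := J_{U_w^{-1}(\partial \vf + w\vS)}(\vz - U_w^{-1}\nabla \vg(\vz))$ into the monotone inclusion
\begin{align*}
U_w(\vz - \vz^+) - \nabla \vg(\vz) \in \partial \vf(\vz^+) + w\vS\vz^+.
\end{align*}
Since $\partial \vf$, $\nabla \vg$, $\vS$, and the metric $U_w$ are all given by block operations on $\vH = \cH_0 \times \prod_{i=1}^n \cH_i$, I will simply read off this inclusion coordinate by coordinate and check that each coordinate reduces to a proximal step in the appropriate metric via Equation~\eqref{eq:proximalmetric}.

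Concretely, for the level 1 metric $U_w$ from \eqref{eq:level1metric}, the primal coordinate of the inclusion reads
\begin{align*}
V_0(x - x^+) - w\vB^\ast(\vy - \vy^+) - \nabla g(x) \in \partial f(x^+) + w\vB^\ast \vy^+.
\end{align*}
The key cancellation is that the $+w\vB^\ast \vy^+$ term on the left (coming from $U_w$) and the $+w\vB^\ast \vy^+$ term on the right (coming from $w\vS\vz^+$) eliminate $\vy^+$ entirely, leaving an inclusion in $x^+$ alone. Rearranging yields $x - V_0^{-1}(w\vB^\ast \vy + \nabla g(x)) \in x^+ + V_0^{-1}\partial f(x^+)$, which by \eqref{eq:proximalmetric} is exactly $x^+ = \prox_f^{V_0}(x - V_0^{-1}(w\vB^\ast \vy + \nabla g(x)))$. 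The dual coordinate at index $i$ then becomes
\begin{align*}
-wB_i(x - x^+) + V_i(y_i - y_i^+) - \nabla l_i^\ast(y_i) \in \partial h_i^\ast(y_i^+) - wB_i x^+,
\end{align*}
and the two terms involving $B_i x^+$ combine to produce the Chambolle--Pock-type reflection $2x^+ - x$. A second application of Equation~\eqref{eq:proximalmetric} gives the stated formula for $y_i^+$. Because $x^+$ is determined before the dual block, the $y_i^+$ updates decouple and run in parallel.

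For the level 2 case with $U_w'$ from \eqref{eq:level2metric}, I will carry out the same expansion, now in three coordinate blocks $(x, \vy, \vv)$, writing $\partial \vf = \partial f \times \prod_i \partial h_i^\ast \times \prod_i \partial l_i$ and $\vS\vx = (\vB^\ast\vy, -\vB x + \vv, -\vy)$. The primal $x^+$ update is unchanged since the $\vv$ block does not interact with the $x$ block in $U_w'$. For the $\vv$ block, the coupling $+w\vy^+$ from $U_w'$ cancels against the $+w\vy^+$ coming from $w\vS\vz^+$, yielding $v_i^+ = \prox_{l_i}^{W_i}(v_i + wW_i^{-1} y_i)$; substituting this (and $x^+$) into the $\vy$-block produces both reflections $2x^+ - x$ and $2v_i^+ - v_i$, and a final proximal step gives $y_i^+$.

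The only genuine obstacle is bookkeeping: one must verify that the particular coupling coefficients built into $U_w$ and $U_w'$ exactly match the coefficients of $w\vS$ so that every cross term cancels. Once those cancellations are checked, the proof is mechanical, and since this computation has already appeared in \cite{condat2013primal,vu2013splitting,combettes2012variable,pesquet2014class} for level 1 and in \cite[Eq.~(2.38)]{bo?2013douglas} for level 2, one can legitimately assert the result without repeating the routine algebra.
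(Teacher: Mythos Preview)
Your proposal is correct and is exactly the standard computation the paper has in mind: the paper omits the proof entirely, citing \cite{condat2013primal,vu2013splitting,combettes2012variable,pesquet2014class} for level~1 and \cite[Eq.~(2.38)]{bo?2013douglas} for level~2, and your expansion via Equation~\eqref{eq:equivalencemetricresolvent} followed by block-by-block cancellation is precisely the argument in those references. One tiny bookkeeping remark: in your description of the level~2 $\vv$-block you say ``the coupling $+w\vy^+$ from $U_w'$ cancels against the $+w\vy^+$ from $w\vS\vz^+$,'' but in fact both contributions are $-w\vy^+$ (one on each side of the inclusion), which cancel when brought to the same side; the conclusion is unaffected.
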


\subsection{Second metric class}

The following result is similar to~\cite[Lemma 4.9]{pesquet2014class} (which applies to $(U_w)^{-1}$). 
\begin{proposition}\label{prop:metricclass2}
Assume the setting of Proposition~\ref{prop:splittinglevel1}. Define a map $U_w : \vH \rightarrow \vH$ as follows: for all $\vx = (x, \vy) \in \vH$,
\begin{align}\label{eq:level1metric2}
U_w\vx &:= \left(V_0x, (\vV- w^2\vB V_0^{-1}\vB^\ast)\vy  \right).
\end{align}
Suppose that $w^2\|\vV^{-1/2} \vB V_0^{-1/2}\|^2 < 1$. Then $U_w$ is self adjoint and strongly monotone: for all $\vx \in \vH$,
\begin{align}\label{eq:level1monotone2}
\dotp{\vx, U_w\vx} &\geq  \min\left\{\mu_{V_0}, \left(1 - w^2\|\vV^{-1/2} \vB V_0^{-1/2}\|^2\right)\mu_{\vV} \right\}\left( \|x\|^2 + \|\vy\|^2\right).
\end{align}
\end{proposition}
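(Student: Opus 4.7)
The plan is to verify self-adjointness and then bound $\dotp{\vx, U_w\vx}$ block-by-block. Unlike the first metric class, the map $U_w$ in Equation~\eqref{eq:level1metric2} is block-diagonal, which makes the analysis cleaner. Self-adjointness is immediate: the $(x,x)$-block is $V_0 = V_0^\ast$ by hypothesis, and the $(\vy, \vy)$-block is $\vV - w^2 \vB V_0^{-1}\vB^\ast$, which is self-adjoint since $\vV$ is, $V_0^{-1}$ is (inheriting from $V_0$), and $(\vB V_0^{-1}\vB^\ast)^\ast = \vB V_0^{-1} \vB^\ast$.

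For strong monotonicity, I would split
\[
\dotp{\vx, U_w\vx} = \dotp{x, V_0 x} + \dotp{\vy, \vV\vy} - w^2 \dotp{\vy, \vB V_0^{-1}\vB^\ast \vy}.
\]
The first summand is at least $\mu_{V_0}\|x\|^2$. For the last two summands, the key move is to substitute $\vu := \vV^{1/2}\vy$, which turns $\dotp{\vy, \vV\vy}$ into $\|\vu\|^2$ and $\dotp{\vy, \vB V_0^{-1}\vB^\ast\vy} = \|V_0^{-1/2}\vB^\ast\vy\|^2$ into $\|V_0^{-1/2}\vB^\ast \vV^{-1/2}\vu\|^2$. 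Applying the operator-norm bound and the fact that adjoints preserve operator norms gives $\|V_0^{-1/2}\vB^\ast \vV^{-1/2}\|^2 = \|\vV^{-1/2}\vB V_0^{-1/2}\|^2$, so
\[
\dotp{\vy, \vV\vy} - w^2 \dotp{\vy, \vB V_0^{-1}\vB^\ast\vy} \geq \bigl(1 - w^2\|\vV^{-1/2}\vB V_0^{-1/2}\|^2\bigr)\|\vV^{1/2}\vy\|^2.
\]

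The assumption $w^2\|\vV^{-1/2}\vB V_0^{-1/2}\|^2 < 1$ guarantees this coefficient is strictly positive. I would then lower bound $\|\vV^{1/2}\vy\|^2 \geq \mu_{\vV}\|\vy\|^2$ using $\vV \in \cS_{\mu_{\vV}}(\prod_{i=1}^n \cH_i)$, and combine the two block estimates, factoring the minimum over the two coefficients, to obtain Equation~\eqref{eq:level1monotone2}.

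The argument is almost entirely routine; the only nontrivial ingredient is the operator-norm identity $\|V_0^{-1/2}\vB^\ast\vV^{-1/2}\| = \|\vV^{-1/2}\vB V_0^{-1/2}\|$, which relies on the well-definedness of the square roots $V_0^{-1/2}$ and $\vV^{-1/2}$ supplied by Lemma~\ref{lem:metricproperties}. The analogous level-2 version is omitted in the paper, and I would likewise expect the proof here to be either omitted or stated very briefly, since it mirrors the calculation in~\cite[Lemma 4.9]{pesquet2014class} applied to $U_w$ instead of $U_w^{-1}$.
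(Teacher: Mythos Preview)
Your proposal is correct and follows essentially the same route as the paper: both factor the $\vy$-block through $\vV^{1/2}$, bound the resulting operator $\vV^{-1/2}\vB V_0^{-1}\vB^\ast\vV^{-1/2}$ by its norm (using the adjoint identity to rewrite it as $\|\vV^{-1/2}\vB V_0^{-1/2}\|^2$), and then combine with the trivial $\mu_{V_0}\|x\|^2$ bound on the first block. Your explicit verification of self-adjointness is a small addition the paper's proof omits, but otherwise the arguments coincide.
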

{\em Proof.}
Set $\vC = w\vB$. For all $\vy \in \prod_{i=1}^n \cH_i$, we have
\begin{align*}
\dotp{\vy, (\vV - \vC V_0^{-1} \vC^\ast)\vy}  &= \dotp{\vV^{1/2}\vy,\left( I_{\prod_{i=1}^n \cH_i} - \vV^{-1/2}\vC V_0^{-1} \vC^\ast\vV^{-1/2}\right)\vV^{1/2}\vy} \\
&= \dotp{ \vV\vy, \vy} - \dotp{\vV^{1/2}\vy,\vV^{-1/2}\vC V_0^{-1} \vC^\ast \vV^{-1/2}\vV^{1/2}\vy} \\
&\geq \left( 1- \|\vV^{-1/2}\vC^\ast V_0^{-1} \vC \vV^{-1/2}\|\right)\dotp{ \vV\vy, \vy} \\
&\geq \left(1 - w^2\|\vV^{-1/2} \vB V_0^{-1/2}\|^2\right)\mu_{\vV}\|\vy\|^2.
\end{align*}
Therefore, 
\begin{align*}
\dotp{\vx,U_w\vx} &\geq \mu_{V_0}\|x\|^2 + \left(1 - w^2\|\vV^{-1/2} \vB V_0^{-1/2}\|^2\right)\mu_{\vV}\|\vy\|^2 \\ 
&\geq \min\left\{\mu_{V_0}, \left(1 - w^2\|\vV^{-1/2} \vB V_0^{-1/2}\|^2\right)\mu_{\vV} \right\}\left( \|x\|^2 + \|\vy\|^2\right).\qquad\endproof
\end{align*}

For simplicity and because it has not yet found an application we do not discuss the generalization of the Equation~\eqref{eq:level1metric2} to the level 2 case. 

Note that our conditions for ergodic convergence in Theorem~\ref{thm:PDerg} require the metric inducing maps to be almost decreasing, up to a summable residual, in the Loewner partial ordering $\succcurlyeq$ (see Section~\ref{sec:notation}).  If $w \in \vR$ and $((U_w)_j)_{j \in \vN}$ is a sequence of maps defined as in Equation~\eqref{eq:level1metric2}, we have
\begin{align*}
((U_w)_k - (U_w)_{k+1})\vx = \left(\left(V_{0, k} - V_{0, k+1}\right)x, \left((\vV_k - \vV_{k+1}) + w^2\vB (V_{0, k+1}^{-1} - V_{0, k}^{-1})\vB^\ast)\right)\vy\right)
\end{align*}
for all $\vx \in \vH$ and $k \in \vN$. Thus, if for all $k \in \vN$, we have $V_{0, k} \succcurlyeq V_{0, k+1}$ and $\vV_{k} \succcurlyeq \vV_{k+1}$, the product metric is decreasing (Lemma~\ref{lem:metricproperties}).

The following proposition shows how to evaluate the FBS operator under the metric induced by $U$. Note that Proposition~\ref{prop:metric2FBS} appears in \cite[Lemma 4.10]{pesquet2014class} for $w = 1$. Thus, we omit the proof. 

\begin{proposition}[Forward-Backward operators under the second metric class]\label{prop:metric2FBS}
Assume the setting of Proposition~\ref{prop:splittinglevel1}. Suppose that $f \equiv 0$, and that $U \in \cS_\rho(\vH)$ (Equation~\eqref{eq:level1metric2}) for some $\rho> 0$. Let $\vz := (x, \vy) \in \vH$. Suppose that $g, l_1^\ast, \cdots, l_n^\ast$ are differentiable. Then $\vz^+ := J_{U^{-1} \left(\partial \vf + \vS\right)}(\vz - U^{-1}\nabla \vg(\vz))$ has the following form: $\vz^+ = (x^+, \vy^+) \in \vH$ where 

\RestyleAlgo{plain}
\SetAlgoVlined
\begin{algorithm}[H]
\SetKwInOut{Input}{input}\SetKwInOut{Output}{output}
\SetKwComment{Comment}{}{}
{
\For{$i=1,~2,\ldots, n$, in parallel}{
$y_{i}^{+} = \prox_{h_i^\ast}^{V_i} \left(y_i + V_i^{-1}\left(wB_i\left(x - V_0^{-1}(\nabla g(x) + w\vB^\ast \vy\right) -  \nabla l_i^\ast (y_i)\right)\right)$\;
}
$x^+ = x - V_0^{-1}\left( \nabla g(x) + w\vB^\ast \vy^+\right)$\;}
\label{alg:leveltwofbsoperator}
\end{algorithm}
\end{proposition}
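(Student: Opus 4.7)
The plan is to prove the identity by unfolding the resolvent-plus-gradient operator into a monotone inclusion and solving it block by block, using the Schur-complement structure built into the metric $U_w$ of Equation~\eqref{eq:level1metric2}.

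First I would apply the characterization in Equation~\eqref{eq:equivalencemetricresolvent} (that is, Part~\ref{prop:basicprox:part:J} of Proposition~\ref{prop:basicprox}) to rewrite $\vz^+ = J_{U^{-1}(\partial \vf + w\vS)}(\vz - U^{-1}\nabla \vg(\vz))$ as the inclusion
\begin{align*}
U(\vz - \vz^+) - \nabla\vg(\vz) \in \partial \vf(\vz^+) + w\vS\vz^+.
\end{align*}
Since $f \equiv 0$, we have $\partial \vf(x^+,\vy^+) = \{0\} \times \prod_{i=1}^n \partial h_i^\ast(y_i^+)$, and by assumption $\nabla \vg(x,\vy) = (\nabla g(x), \nabla l_1^\ast(y_1),\ldots,\nabla l_n^\ast(y_n))$. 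Splitting the inclusion into its $x$-block and $\vy$-block using the block structure of $U$, $\vS$, $\partial \vf$, and $\nabla \vg$, the $x$-block reduces to an identity whose only occurrence of $\vy^+$ is through $w\vB^\ast \vy^+$, yielding immediately
\begin{align*}
x^+ = x - V_0^{-1}\bigl(\nabla g(x) + w\vB^\ast \vy^+\bigr).
\end{align*}

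The next step is to substitute this expression for $x^+$ (and in particular $w\vB x^+ = w\vB x - w\vB V_0^{-1}\nabla g(x) - w^2 \vB V_0^{-1}\vB^\ast \vy^+$) into the $\vy$-block inclusion. This is where the metric in Equation~\eqref{eq:level1metric2} earns its keep: the correction term $-w^2\vB V_0^{-1}\vB^\ast$ appearing in the $\vy$-block of $U$ multiplies $-\vy^+$ and produces a $+w^2\vB V_0^{-1}\vB^\ast \vy^+$ that exactly cancels the $-w^2\vB V_0^{-1}\vB^\ast \vy^+$ coming from $-w\vB x^+$. After cancellation, the inclusion collapses to
\begin{align*}
\vV \vy^+ + \partial \vh^\ast(\vy^+) \ni \vV \vy - \nabla \vl^\ast(\vy) + w\vB\bigl(x - V_0^{-1}(\nabla g(x) + w\vB^\ast \vy)\bigr),
\end{align*}
which by the block-separability of $\vV$ and $\partial \vh^\ast$ is equivalent to the stated parallel proximal update $y_i^+ = \prox_{h_i^\ast}^{V_i}(\,\cdots)$ for $i=1,\ldots,n$.

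Finally, once $\vy^+$ has been produced, plugging it into the previously derived formula for $x^+$ gives the last line of the algorithm, completing the proof. The only real obstacle is bookkeeping the cross terms in the $\vy$-block substitution carefully enough to see the Schur-complement cancellation; once that cancellation is isolated, both updates become transparent and follow from the definition of the proximal operator in the metric $V_i$ (Equation~\eqref{eq:proximalmetric}). The analogous level~2 extension would proceed identically but is not pursued here, consistent with the remark preceding the proposition.
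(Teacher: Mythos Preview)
Your approach is correct and is exactly the natural direct computation: characterize the resolvent via the inclusion in Equation~\eqref{eq:equivalencemetricresolvent}, read off the $x$-block to get $x^+$ in terms of $\vy^+$, substitute into the $\vy$-block, and observe that the Schur-complement term $-w^2\vB V_0^{-1}\vB^\ast$ in the metric cancels the coupling introduced by $-w\vB x^+$, leaving a decoupled proximal step in $\vV$.

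The paper itself does not give a proof of this proposition; it simply notes that the result (for $w=1$) appears as \cite[Lemma~4.10]{pesquet2014class} and omits the argument. Your write-up is therefore more than the paper provides, and it follows the same line one would find in that reference. One small point worth making explicit: you (correctly) work with $w\vS$ rather than $\vS$ in the resolvent, which is what the displayed algorithm requires and is consistent with the analogous Proposition~\ref{prop:metricstdFBS}; the statement as printed drops the $w$, but your computation shows the formulas only match with $w\vS$.
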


Now consider the special case $w = 0$. In this case, the first and second metric classes agree. The following Proposition with $U = I_{\vH}$ appears in \cite[Proposition 2.7]{briceno2011monotone+}. Our generalization is straightforward, so we omit the proof. 

\begin{proposition}[Resolvents of skew operators]\label{prop:metricw0FBS}
Assume the setting of Proposition~\ref{prop:splittinglevel1}. Let $w \in \vR$ and suppose that $U_w \in \cS_\rho(\vH)$ (Equation~\eqref{eq:level1metric2}) for some $\rho> 0$. Let $\vz := (x, \vy) \in \vH$. Then $\vz^+ := J_{\gamma U^{-1} \vS}(\vz)$ has the following form: $\vz^+ = (x^+, \vy^+) \in \vH$ where
\begin{align*}
x^+ &:= (I_{\cH_0} + \gamma^2 V_0\vB^\ast \vV \vB)^{-1}(x - \gamma V_0 \vB^\ast)\vy \\
\vy^+ &:=(I_{\prod_{i=1}^n\cH_i} + \gamma^2 \vV\vB V_0 \vB^\ast)^{-1} (\vy + \gamma \vV\vB x)
\end{align*}
\end{proposition}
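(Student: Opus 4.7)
The plan is to reduce the resolvent identity to a $2\times 2$ block linear system and solve it by Schur complement. Concretely, I will specialize the setting to $w=0$, so that by Equation~\eqref{eq:level1metric2} we have $U_0\vx = (V_0 x, \vV \vy)$ for $\vx=(x,\vy)$, i.e.\ $U_0$ is block diagonal. By Part~\ref{prop:basicprox:part:J} of Proposition~\ref{prop:basicprox}, applied with $\partial\vf=0$ and $A=\gamma\vS$, the equation $\vz^+ = J_{\gamma U_0^{-1}\vS}(\vz)$ is equivalent to
\begin{equation*}
U_0(\vz - \vz^+) = \gamma \vS \vz^+,
\end{equation*}
so using the definition of $\vS$ from Proposition~\ref{prop:splittinglevel1}, namely $\vS(x,\vy) = (\vB^\ast\vy, -\vB x)$, this becomes the coupled pair
\begin{align*}
V_0 x^+ + \gamma \vB^\ast \vy^+ &= V_0 x,\\
\vV \vy^+ - \gamma \vB x^+ &= \vV \vy.
\end{align*}

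Next I will eliminate one variable at a time. From the first equation, $x^+ = x - \gamma V_0^{-1}\vB^\ast \vy^+$; substituting into the second and collecting terms in $\vy^+$ gives $(\vV + \gamma^2 \vB V_0^{-1}\vB^\ast)\vy^+ = \vV \vy + \gamma \vB x$, which after left-multiplying by $\vV^{-1}$ yields the stated formula for $\vy^+$ (noting that the displayed occurrences of $V_0$ and $\vV$ in the statement should be read as $V_0^{-1}$ and $\vV^{-1}$, which is the only way the composition $\vV\vB V_0\vB^\ast$ in the inverse is well-defined as an invertible perturbation of the identity). The analogous elimination, solving the second equation for $\vy^+ = \vy + \gamma \vV^{-1}\vB x^+$ and substituting into the first, gives $(V_0 + \gamma^2 \vB^\ast \vV^{-1}\vB)x^+ = V_0 x - \gamma \vB^\ast \vy$, hence the stated formula for $x^+$.

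The final step is to justify that each of the two operators $\vV + \gamma^2\vB V_0^{-1}\vB^\ast$ (acting on $\prod_{i=1}^n\cH_i$) and $V_0 + \gamma^2 \vB^\ast \vV^{-1}\vB$ (acting on $\cH_0$) is invertible, so that the closed-form expressions make sense. This is immediate: both are sums of a strongly monotone self-adjoint operator (the terms $V_0\in\cS_{\mu_{V_0}}$ and $\vV\in\cS_{\mu_{\vV}}$ from Equation~\eqref{eq:blockdiagonal}) and a positive semidefinite self-adjoint operator of the form $L^\ast L$ with $L = \vV^{-1/2}\vB$ or $L = V_0^{-1/2}\vB^\ast$, so each is again strongly monotone and thus boundedly invertible by the Lax--Milgram theorem.

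There is essentially no analytical obstacle here; the only place to be careful is the notational one, making sure the Schur-complement formulas are written with the correct inverses so they compose properly across $\cH_0$ and $\prod_{i=1}^n\cH_i$. Everything else is a single linear elimination justified by the resolvent identity, and alternative derivations (e.g.\ deriving it from Proposition~\ref{prop:metric2FBS} with $f\equiv 0$ and $g\equiv 0$, or directly by inverting the $2\times 2$ block operator $U_0 + \gamma\vS$) give the same answer.
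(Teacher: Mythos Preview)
Your proposal is correct and is exactly the ``straightforward'' derivation the paper has in mind; the paper omits the proof entirely, citing \cite[Proposition 2.7]{briceno2011monotone+} for the case $U=I_{\vH}$, and the block-elimination you carry out is the natural extension. You are also right that the displayed formulas in the statement contain typos: the occurrences of $V_0$ and $\vV$ inside the inverses and in the right-hand sides should be $V_0^{-1}$ and $\vV^{-1}$, and the first line should read $(x-\gamma V_0^{-1}\vB^\ast\vy)$ rather than $(x-\gamma V_0\vB^\ast)\vy$.
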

\indent Generalizing the resolvent operator computation in Proposition~\ref{prop:metricw0FBS} to the level 2 case is straightforward, though slightly messy. It has not found application in the literature yet, so we omit the statement.

\subsection{New and old convergence rates}\label{sec:oldandnew}

Table~\ref{tab:literature} lists the application of PPA, FBS, PRS, and FBF algorithms under the metrics introduced in Section~\ref{sec:algorithmclasses} and indicates which convergence rates have been shown in the literature.  We note that, to the best of our knowledge, for all of the methods we discuss, the nonergodic fixed metric convergence rates, the ergodic convergence rates under variable metrics, and the nonergodic/ergodic convergence rates with nonconstant relaxation have never appeared in the literature.

Any pairing between metrics, algorithms, and splittings that does not appear in Table~\ref{tab:literature} is an algorithm where, to the best of our knowledge, no convergence rate has appeared in the literature.  

\begin{center}
\begin{table}
\centering
    \begin{tabular}{llllll}
    \toprule
    Reference    & Algorithm  & Metric & Level & $w$ & Rates          \\ \toprule
    \cite[Algorithm 1]{chambolle2011first} &  PPA & \eqref{eq:level1metric} & 1 & $1$& $O(1/(k+1))$ ergodic \cite{chambolle2011first}  \\\midrule
    \cite[Algorithm 2.2]{bo?2013douglas} & PPA & \eqref{eq:level2metric} & 2 & $1$ & none\\  \midrule
    \cite{condat2013primal,vu2013splitting} & FBS & \eqref{eq:level1metric} & 1& $1$ & $O(1/(k+1))$ ergodic \cite{bo2014convergence}  \\  \midrule
    \cite{chen2013primal,combettes2014forward,pesquet2014class} & FBS & \eqref{eq:level1metric2} & 1 & $1$& none \\  \midrule
    \cite[Algorithm 2.1]{bo?2013douglas} & PRS & \eqref{eq:level1metric} & 1 & $1/2$& none \\  \midrule
    \cite[Remark 2.9]{briceno2011monotone+}, \cite{o2014primal} & PRS & \eqref{eq:level1metric2} & 1 & $0$ & none \\  \midrule
    \cite{briceno2011monotone+,combettes2012primal} & FBF & \eqref{eq:level1metric2} & 1 & $0$ & $O(1/(k+1))$ ergodic~\cite{boct2014convergence} \\  \bottomrule
    \end{tabular}
 \caption{This table lists the original appearance of the algorithms constructed from pairing the metrics in Section~\ref{sec:algorithmclasses} with the PPA, FBS, PRS, and FBF algorithms applied to Problem~\ref{prop:modelminimization}.  See Propositions~\ref{prop:splittinglevel1} and~\ref{prop:splittinglevel2} for the definitions of the ``level."}
\end{table}
\label{tab:literature}
\end{center} 
%%End Section 5
%%%%%%%%%%%

%%%%%%%%%%%
%%Begin Section 6
\section{Conclusion}
In this paper, we provided a convergence rate analysis of a general monotone inclusion problem under the application of four different algorithms.  We provided \emph{ergodic} convergence rates under variable metrics, stepsizes, and relaxation, and recovered several known rates in the process.  In addition, for three of the algorithms we provided the first \emph{nonergodic} primal-dual gap convergence rates that have appeared in the literature. Finally, we showed how our results imply convergence rates of a large class of primal-dual splitting algorithms.  The techniques developed in this paper are not limited to the four algorithms we chose to study, and the proofs of this paper can be used as a template for proving convergence rates of other special cases of the unifying scheme.

\section*{Acknowledgement}
We thank Professor Wotao Yin and the two anonymous referees; their comments were invaluable.
%%End Section 6
%%%%%%%%%%%

%%%%%%%%%%%
%%Begin Bibliography

\bibliographystyle{siamNoFirst.bst}
\bibliography{100307Bibliography}

%%%%%%%%%%%
%%End Bibliography

\end{document}